\documentclass{amsart}



\usepackage{amssymb}
\usepackage{amsmath}
\usepackage{amsthm}
\usepackage[normalem]{ulem}
\usepackage{verbatim}

\usepackage[utf8]{inputenc}
\usepackage[T1]{fontenc}
\usepackage{selinput}
\SelectInputMappings{%
	adieresis={ä},
	eacute={é},
	Lcaron={Ľ},
}

\newtheorem{theorem}{Theorem}[section]
\newtheorem{lemma}[theorem]{Lemma}

\theoremstyle{definition}

\newtheorem{corollary}[theorem]{Corollary}

\theoremstyle{remark}
\newtheorem{remark}[theorem]{Remark}

\numberwithin{equation}{section}

\newcommand{\Lie}{\mathrm{Lie}}

\newcommand{\Hom}{\mathrm{Hom}}

\newcommand{\Gal}{\mathrm{Gal}}
\newcommand{\ind}{\mathrm{ind}}
\newcommand{\Res}{\mathrm{Res}}
\newcommand{\loc}{\mathrm{loc}}
\newcommand{\fin}{\mathrm{fin}}

\newcommand{\mult}{\mathrm{mult}}

\newcommand{\Q}{\mathbb Q}

\newcommand{\Z}{\mathbb Z}
\newcommand{\N}{\mathbb N}

\begin{document}

\title[The pro-$p$ Iwahori subgroup of $GL(n)$ and base change]{Globally analytic principal series representation \\ and Langlands base change}


\author{Jishnu Ray}
\address{Département de Mathématiques, Bâtiment 307,
Faculté des Sciences d'Orsay, Université Paris-Sud,
F-91405 Orsay Cedex}
\curraddr{}
\email{jishnuray1992@gmail.com; jishnu.ray@u-psud.fr}
\thanks{}






\maketitle

\section{Introduction}\label{sub:intro}
The paper \cite{Clozel1}  deals with the construction of base change of globally analytic distributions on the pro-$p$ Iwahori groups (seen as rigid-analytic spaces) which is compatible with the $p$-adic Langlands correspondence in the case of principal series of $GL(2)$, and this is what we will extend to $GL(n)$ in this article. Of course, we need to take $p>n+1$, so that the pro-$p$ Iwahori subgroup of $GL(n)$ is $p$-saturated in the sense of Lazard \cite[III, $3.2.7.5$]{Lazard} and isomorphic analytically to the product $\mathbb{Z}_p^d$ \cite[III, $3.3.2$]{Lazard} (for some $d$, here $\mathbb{Z}_p$ is seen as a rigid-analytic closed ball of radius $1$).  

In section \ref{sub:obtaing}, for a finite unramified extension $L$ over $\mathbb{Q}_p$, we briefly recall the notion of restriction of scalars functor in the context of rigid-analytic spaces. In section \ref{sub:holomorphicbasechange}, we give the basic definitions of holomorphic and Langlands base change maps
following \cite{Clozel1}.  Section \ref{sub:sectionbigger} treats the case of principal series of $GL(n)$. Specifically, denote by $G$ the pro-$p$ Iwahori subgroup of $GL_n(\mathbb{Z}_p)$ (the group of matrices in $GL_n(\mathbb{Z}_p)$ that are lower unipotent modulo $p\mathbb{Z}_p$), $B$ the subgroup of matrices in $GL_n(\mathbb{Z}_p)$ which are lower triangular modulo $p\mathbb{Z}_p$, $P_0 \supset T_0$  the set of upper triangular (resp. diagonal) matrices in $B$, $Q_0=P_0 \cap G$, $P^+$ the Borel subgroup of upper triangular matrices in $GL_n(\mathbb{Z}_p)$, $W$ the Weyl group (isomorphic to $S_n$) of $GL_n(\mathbb{Q}_p)$ with respect to its maximal torus, $P_w^+=B \cap wP^+w^{-1},w \in W,$   ${\chi: T_0 \rightarrow K^{\times}}$  a locally analytic character with $\chi(t_1,...,t_n)=\chi_1(t_1)\cdots \chi_n(t_n),$
and $\chi_i(t)=t^{c_i}$, where $c_i=\frac{d}{dt}\chi_i(t)|_{t=1}$, for $t$ sufficiently close to $1$,  $I_{\loc}$ be the locally analytic functions
\begin{equation*}
I_{\loc}=\{f \in \mathcal{A}_{\loc}(G,K):f(gb)= \chi(b^{-1})f(g),b \in Q_0,g \in G\},
\end{equation*}
 with $\mathcal{A}_{\loc}(G,K)$  the locally analytic functions having values in an unramified extension $K$ over $\mathbb{Q}_p$. Note that the vector space of locally analytic principal series  $\ind_{P_0}^B(\chi)_{\loc}$ 
 is isomorphic to the space  $I_{\loc}$, which are locally analytic functions from $\Z_p^d \rightarrow K$ for  an appropriate dimension $d$ (cf. section \ref{sub:sectionbigger}). "Locally analytic functions" mean that locally around a neighborhood of a point, the functions  can be written in the form of power series with coefficients in $K$. 
 
 Then, we show in lemmas \ref{lem:lemmadiagonal1}, \ref{lem:lemmalower1}, \ref{lem:lemmaupper2} that the action of  $G$ on the \textit{globally analytic vectors} of $I_{\loc}$ given by $h\cdot f(g) \longmapsto f(h^{-1}g), \text{          } (h \in G)$ is a globally analytic action in the sense of Emerton \cite{Emertonbook}. Here the globally analytic functions of $I_{\loc}$ are the Tate algebra of functions from $\Z_p^d \rightarrow K$ which can be written as power series on the affinoid rigid-analytic space $\Z_p^d$ with coefficients in $K$ going to $0$ (section \ref{sub:sectionbigger}) . For a detailed discussion on globally analytic representation see \cite{Emertonbook}.
 
 The requisite condition of analyticity of $\chi$ is treated in \ref{eq:unramifiedK}. Let $\mu$ be the linear form from the Lie algebra of the torus $T_0$ to $K$ given by 
  $$\mu=(-c_1,...,-c_n):Diag(t_1,...,t_n)\mapsto \sum_{i=1}^n-c_it_i$$
 where $t=(t_i) \in \Lie(T_0)$. For negative root $\alpha =(i,j), i>j$, let $H_{(i,j)}=E_{i,i}-E_{j,j}$ where $E_{i,i}$ is the standard elementary matrix.

 Then,  we show that (see theorem \ref{thm:holomorphic} and theorem \ref{thmmainglo}):\\

\textit{\textbf{Theorem.}} \textit{Assume } $p>n+1$ and $\chi$ is analytic. \textit{Then the  space of globally analytic vectors of }$\ind_{P_0}^B(\chi)_{\loc}$ \textit{is an } \textit{admissible} \textit{and globally analytic representation of} $G$. \textit{Furthermore the space of globally analytic vectors of } $\ind_{P_0}^B(\chi)_{\loc}$ \textit{ is irreducible if and only if for all } $\alpha=(i,j) \in \Phi^-$,  $-\mu(H_{\alpha}) + i-j \notin \{1,2,3,...\}$.\\

Here, the admissibility is in the sense of \cite{Emertonbook} (see also \cite[sec. $2.3$]{Clozel1}). For global analyticity, we compute explicitly the action of $G$ on the Tate algebra of globally analytic functions of $\ind_{P_0}^B(\chi)_{\loc}$ and show that the action map is a globally analytic function on $G$ seen as a rigid-analytic space. For the irreducibility we first use the action of the Lie algebra of $G$ to show that any non-zero closed $G$-invariant subspace of the globally analytic vectors of  $\ind_{P_0}^B(\chi)_{\loc}$ contains the constant function $1$. The remaining part of the argument for the proof of irreducibility uses the notion of Verma modules and its condition of irreducibility, a result of Bernstein-Gelfand.

 Finally, in section
\ref{sub:lastsectionbasechange}, we extend these results to the pro-$p$ Iwahori group of $GL_n(L)$ where $L$ is an unramified finite extension of $\mathbb{Q}_p$. Then, in theorem \ref{thmlastiwahoribase}, we use the  Steinberg tensor product \cite{Steinberg1} to construct base change in the context of Langlands functoriality. 

In  appendix \ref{appendixA} we deal with the   globally analytic vectors induced from the Weyl orbit of the upper triangular Borel subgroup of the Iwahori subgroup $B$, i.e. the globally analytic vectors of $\ind_{P_w^+}^B(\chi^w)_{\loc}$, where $\chi_w(h)=\chi(w^{-1}hw)$.
\section{Base change maps for analytic functions} 
We introduce the basic notions of rigid-analytic geometry including a brief discussion on the restriction of scalars. Then  we briefly recall (following \cite{Clozel1}) the notions of holomorphic and Langlands base change functors producing from a globally analytic representation over $\Q_p$, to a representation over $L$. The Langlands base change is related to the "Steinberg tensor product" described at the end of section $1.1$ of \cite{Clozel2} for $GL(2)$. 
\subsection{}\label{sub:obtaing}
Let $L$ be a finite unramified extension of $\mathbb{Q}_p$ of degree $N$, $(B^1/L)$ be the (rigid-analytic) closed unit ball over $L$ with its Tate algebra of analytic functions $\mathcal{T}_L=L\langle x \rangle$, $G_L$ be a rigid-analytic group isomorphic as a rigid analytic space to  $(B^1/L)^d$ which is a rigid-analytic space with affinoid algebra ${\mathcal{A}(G_L):=\widehat{\otimes}^d\mathcal{T}_L=L\langle x_1,...,x_d \rangle}$, the Tate algebra of analytic functions in $d$ variables with coefficients in $L$. (With the notations of section \ref{sub:intro}, for $L=\mathbb{Q}_p$, we can take $G_L$ to be the pro-$p$ Iwahori group $G$ assuming $p>n+1$). The restriction of scalars functor \cite{bertapelle} associates to $G_L$ a rigid analytic space $\Res_{L/\mathbb{Q}_p}G_L$ over $\mathbb{Q}_p$. In general, this functor does not behave trivially, but $L$ being unramified, we obtain 
\begin{equation*}
\Res_{L/\mathbb{Q}_p}(B^1/L)\cong (B^1/\mathbb{Q}_p)^N,
\end{equation*}
\cite[lemma $1.1$]{Clozel1} which is canonically obtained by the choice of a basis $(e_i)$ of $\mathcal{O}_L$ over $\mathbb{Z}_p$. Precisely, for an affinoid $\mathbb{Q}_p$-algebra $\mathcal{B}$ and for   $f \in \Hom_L(L\langle x \rangle, \mathcal{B} \otimes_{\mathbb{Q}_p}L)$ with $f(x)=\sum b_ie_i \text{  } (  b_i \in \mathcal{B})$, we canonically define a function $g \in \Hom_{\mathbb{Q}_p}(\mathbb{Q}_p \langle x_1,...,x_N \rangle, \mathcal{B})$ with $g(x_i)=b_i$ which is given by
 
 \begin{equation*}
 g(x_1,...,x_N)=f(\sum e_ix_i),
 \end{equation*}
   \cite[section $1.1$]{Clozel1}. As the restriction of scalars is compatible with direct products \cite[prop. $1.8$]{bertapelle}, $\Res_{L/\mathbb{Q}_p}G_L = (B^1/\mathbb{Q}_p)^{dN}.$ Henceforth, we write $\Res \text{ } G_L$ to denote $\Res_{L/\mathbb{Q}_p}G_L$.

\subsection{}\label{sub:holomorphicbasechange}
   Assume now that $G_L\cong (B^1/L)^d $ is obtained by \textit{extension} of scalars from $\mathbb{Q}_p$. Then, the Tate algebra $\mathcal{A}(G_L)=\mathcal{A}(G_{\mathbb{Q}_p}) \otimes L.$  The co-multiplication map $m^*$, defined by a morphism 
\begin{equation*}
m^*:\mathcal{A}(G_L) \rightarrow \mathcal{A}(G_L) \widehat{\otimes} \mathcal{A}(G_L)
\end{equation*}
with image inside the completed tensor product,  is obtained by extension of scalars from 
\begin{equation*}
m_0^*:\mathcal{A}(G_{\mathbb{Q}_p}) \rightarrow \mathcal{A}(G_{\mathbb{Q}_p}) \widehat{\otimes} \mathcal{A}(G_{\mathbb{Q}_p}).
\end{equation*}
To an  analytic function $f \in \mathcal{A}(G_L)$, we associate a function ${g \in \mathcal{A}(\Res\text{ } G_L) \otimes L}$, $\Res\text{ } G_L$ defined as in section \ref{sub:obtaing}. Then, by composing with the natural map $\mathcal{A}(G_{\mathbb{Q}_p}) \rightarrow \mathcal{A}(G_L)$, we obtain a "holomorphic base change" map 
\begin{equation*}
b_1:\mathcal{A}(G_{\mathbb{Q}_p}) \rightarrow \mathcal{A}(\Res \text{ }G_L) \otimes L.
\end{equation*}
The Galois group $\Sigma=\Gal(L/\mathbb{Q}_p)$ of the unramified Galois extension $L$ acts naturally on $G_L$ (by automorphisms on the Tate algebra) and acts on $\Res \text{ }G_L$ by $\mathbb{Q}_p$-automorphism. Define the map
\begin{align*}
b:\mathcal{A}(G_{\mathbb{Q}_p}) &\rightarrow \mathcal{A}(\Res \text{ }G_L) \otimes L\\
b(f) &= \prod_{\sigma \in \Sigma}b_1(f)^{\sigma}.
\end{align*}
Then, by \cite[prop. $1.5$]{Clozel1}, the natural maps $b_1,b$ commute with co-multiplications and under the isomorphism $\mathcal{A}_L(\Res \text{ }G_L) \cong \widehat{\otimes}_{\sigma} \text{ } \mathcal{A}(G_L)$, the map $b=\otimes_{\sigma} \text{ }b_1^{\sigma}$ (the isomorphism $\mathcal{A}_L(\Res \text{ }G_L) \cong \widehat{\otimes}_{\sigma} \text{ } \mathcal{A}(G_L)$ follows from  $\Res \text{ } G_L \otimes_{\mathbb{Q}_p} L \cong \prod_{\sigma}G_L$, see the discussion before proposition $1.5$ of \cite{Clozel1}). 

Fix  a finite extension $K$ of $\mathbb{Q}_p$ and an injection $i: L \subset K$. If $\sigma \in \Gal(L/\mathbb{Q}_p)$, we then have the injection $i\circ \sigma: L \rightarrow K$. Denote by $V$ a (globally) analytic representation of $G_{\mathbb{Q}_p}$ on a $K$- Banach space. Then $V$ naturally extends to an analytic representation of $G_L$; this is called the \textit{holomorphic base change} of $V$ in \cite{Clozel1}. For $\sigma \in \Gal(L/\mathbb{Q}_p)$, write $V^{\sigma}$ the representation of $G_L$ associated to $i\circ \sigma$. Then, the \textit{full (Langlands) base change} of $V$ is defined to be the globally analytic representation of $\Res_{L/\mathbb{Q}_p}(G_L)$ on $\widehat{\otimes}_{\sigma} \text{ } V^{\sigma}$ (cf. \cite[def. $3.2$]{Clozel1}). 
\section{Globally analytic principal series for $GL(n)$}\label{sec:twoiwahori}
We first recall the notion of locally analytic principal series representation induced from the Borel to the Iwahori subgroup of $GL(n,\Z_p)$. Then we treat the action of the pro-$p$ Iwahori on the subspace of rigid-analytic functions within the locally analytic principal series and show that this action is a globally analytic action (theorem \ref{thmmainglo}). This gives us the globally analytic induced principal series representation under the pro-$p$ Iwahori subgroup $G$. Furthermore, we treat the condition of irreducibility of the globally analytic principal series by translating an irreducibility condition of a suitable Verma module (theorem  \ref{thm:holomorphic}). Finally in section \ref{sub:lastsectionbasechange} we base change our globally analytic representation to a finite unramified extension $L$ of $\Q_p$.
\subsection{~}\label{sub:sectionbigger}
 We consider the case of principal series for $GL_n(\mathbb{Z}_p)$. Denote by $G$ the pro-$p$ Iwahori subgroup of $GL_n(\mathbb{Z}_p)$, i.e. the group of matrices in $GL_n(\mathbb{Z}_p)$ that are lower unipotent modulo $p\mathbb{Z}_p$, $B$ the  subgroup of matrices in $GL_n(\mathbb{Z}_p)$ which are lower triangular modulo $p\mathbb{Z}_p$, $P_0 \supset T_0$ be the set of upper triangular (resp. diagonal) matrices in $B$, $\chi: T_0 \rightarrow K^{\times}$ be a locally analytic character with 
\begin{equation*}
\chi(t_1,...,t_n)=\chi_1(t_1)\cdots \chi_n(t_n),
\end{equation*}
and $\chi_i(t)=t^{c_i}$ where $c_i=\frac{d}{dt}\chi_i(t)|_{t=1}$ for $t$ sufficiently close to $1$. Hence, $c_i \in K$. 

We first consider, as in \cite{Clozel1}, the locally analytic induced representation of $B$,
\begin{equation*}
J_{\loc}=\ind_{P_0}^B(\chi)_{\loc}=\{f \in \mathcal{A}_{\loc}(B,K):f(gb)= \chi(b^{-1})f(g),b \in P_0, g \in B\},
\end{equation*} 
 where $\chi$ is naturally extended to $P_0$ and $\mathcal{A}_{\loc}(B,K)$ is the space of  locally analytic functions on $B$. With $U$ the lower unipotent subgroup of $B$ with entries in $\mathbb{Z}_p$ in the lower triangular part, $1$ in the diagonal entries and $0$ elsewhere, we have the natural decomposition 
\begin{equation}\label{eq:iwaho}
B=UP_0
\end{equation}
Since $\chi$ is fixed, the restriction of the functions of $J_{\loc}$ to $G \subset B$ is injective. With $Q_0=P_0 \cap G$, we deduce that the vector space of $J_{\loc}$ is 
\begin{equation}\label{eq:chiaction}
I_{\loc}=\{f \in \mathcal{A}_{\loc}(G,K):f(gb)\equiv \chi(b^{-1})f(g),b \in Q_0, g \in G\}.
\end{equation}
 With the decomposition $G=UQ_0$, we see that $I_{\loc}\cong \mathcal{A}_{\loc}(\mathbb{Z}_p^{\frac{n(n-1)}{2}},K)=\mathcal{A}_{\loc}(U,K)$. Here, $\mathbb{Z}_p$ is seen as the rigid analytic (additive) group $B^1(\mathbb{Z}_p)$. The group $G$ acts by left translation
\begin{equation}\label{eq:Gaction}
h\cdot f(g) \longmapsto f(h^{-1}g).
\end{equation}

Let $E_{i,j}$ be the elementary matrices with $1$ in the $(i,j)^{th}$ place and $0$ elsewhere.
From now on, we assume 
\begin{equation}\label{eq:conditiononp}
p > n+1,
\end{equation}
then $G$ is $p$-saturated in the sense of Lazard \cite[III, $3.2.7.5$]{Lazard} and thus, it is the ordered product (as a rigid analytic group) of the following one-parameter subgroups:
 \begin{enumerate}
 \item first, for $y \in \mathbb{Z}_p,$ take the one-parameter lower unipotent matrices by the following lexicographic order: the $1$-parameter group of matrices $(1+yE_{i,j})$ comes before the $1$-parameter group of matrices$(1+yE_{k,l})$ if and only if $i<k$ or $i=k$ and $ j<l$, \label{lexi}\\
 
 \item  then, for $t_k \equiv 1[p]$ and $k \in [1,n]$, take the one-parameter diagonal subgroups $(t_kE_{k,k}+\sum_{i=1,i \neq k}^nE_{i,i})$ starting from the top left extreme to the low right extreme and, \\
 
 \item finally, for $y \in p\mathbb{Z}_p$, take the upper unipotent matrices in the following order: the $1$-parameter group of matrices $(1+yE_{i,j})$ comes before the $1$-parameter group of matrices $(1+yE_{k,l})$ if and only if $i\geq k$ or $i=k$ and  $j>l$. 
 \end{enumerate}
 That is, for the lower unipotent matrices, we start with the top and left extreme and then fill the lines from the left, going down and for the upper unipotent matrices we start with the low and right extreme and then fill the lines from the right, going up. (Cf. \cite[III, $3.3.2$]{Lazard} for the rigid-analyticity and see theorem $2.2.1$ and remark $2.2.2$ of \cite{Ray2} for the order of the product i.e. an ordered Lazard basis of $G$, although in \cite{Ray2} we have taken $G$ to be upper unipotent matrices modulo $p$ but this does not matter).
 
 Let now $\mathcal{A}=\mathcal{A}(U,K)=\mathcal{A}(\mathbb{Z}_p^{\frac{n(n-1)}{2}},K)$ be the subspace of globally analytic functions of $I_{\loc}=\mathcal{A}_{\loc}(U,K)$. Thus  $f \in \mathcal{A}$ is a globally analytic function in the variables $a_{i,j}$ on $U$, that is, 
 $$f(A)=\sum_{\nu \in \mathbb{N}^d}c_{\nu}a^{\nu}$$
such that $c_{\nu} \in K$ and $|c_{\nu}| \rightarrow 0$ as $|\nu| \rightarrow \infty$. 
 Here ${d=\frac{n(n-1)}{2}}$,  $a=(a_{2,1},a_{3,1},a_{3,2},...,a_{n,n-1}) \in \mathbb{Z}_p^d$ with the lexicographic ordering of $a_{i,j}$ as in (\ref{lexi}),  $\nu=(\nu_{2,1},\nu_{3,1},...,\nu_{n,n-1}) \in \mathbb{N}^d, a^{\nu}=a_{2,1}^{\nu_{2,1}}\cdots a_{n,n-1}^{\nu_{n,n-1}}$ and $|\nu|=\nu_{2,1}+\cdots + \nu_{n,n-1}$.

 We now seek conditions such that if $f$ is a globally analytic function on $G$ and the action of $G$ is defined as above then, the map $h \longmapsto h \cdot f(g)=f(h^{-1}g)$ is globally analytic. 
 \begin{lemma}\label{lem:sufficeaction}
 With the above notations, for $p>n+1$, the action of $G$ on $f \in \mathcal{A}(U,K)$, i.e the map $h\mapsto h \cdot f$ is a globally analytic function on $G$ if and only if it is so for all $1$-parameter (rigid-analytic) subgroups and the diagonal subgroup of which $G$ is the product.
 \end{lemma}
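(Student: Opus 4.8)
The plan is to exploit the factorization of $G$ into one-parameter subgroups that becomes available once $p>n+1$. Label the factors $H_1,\dots,H_m$ (so $m=n^2$): first the $\tfrac{n(n-1)}{2}$ lower unipotent one-parameter subgroups in the lexicographic order, then the $n$ one-parameter diagonal subgroups, then the $\tfrac{n(n-1)}{2}$ upper unipotent ones, exactly as in the Lazard basis described above. Since $p>n+1$, the group $G$ is $p$-saturated and the product map $H_1\times\cdots\times H_m\to G$ is an isomorphism of rigid-analytic spaces (cf.\ \cite[III, $3.3.2$]{Lazard} and \cite[theorem~$2.2.1$]{Ray2}), each $H_i$ being a closed unit ball; hence the coordinates $z_1,\dots,z_m$ on the factors form a system of global rigid-analytic coordinates on $G$, and an $\mathcal{A}$-valued function on $G$ is globally analytic precisely when, in these coordinates, it is a power series in $z_1,\dots,z_m$ with coefficients in $\mathcal{A}=\mathcal{A}(U,K)$ tending to $0$. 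The ``only if'' direction then follows at once: each $H_i$ (and the diagonal torus) is a closed rigid-analytic subgroup of $G$, so the restriction to it of a globally analytic orbit map on $G$ --- equivalently, the composition with the closed immersion into $G$ --- is again globally analytic.

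For the converse I would assume the action of each $H_i$ on $\mathcal{A}$ is globally analytic, i.e.\ for every $g\in\mathcal{A}$ the orbit map $h_i\mapsto h_i\cdot g$ lies in $\mathcal{A}(H_i)\widehat{\otimes}\mathcal{A}$ (and likewise for the diagonal torus, which is itself the product of the $n$ diagonal $H_i$'s). Because the $G$-action is by left translation it is a genuine left action, so for $h=h_1\cdots h_m$ one has $\rho(h)=\rho(h_1)\cdots\rho(h_m)$ and $h\cdot f=h_1\cdot(h_2\cdot(\cdots(h_m\cdot f)))$. I would then show, by downward induction on $i$, that the partial orbit map $\phi_i\colon H_i\times\cdots\times H_m\to\mathcal{A}$, $(z_i,\dots,z_m)\mapsto (h_i\cdots h_m)\cdot f$, is globally analytic; the case $i=m$ is the hypothesis and $i=1$ is the lemma. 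For the inductive step, write $\phi_{i+1}=\sum_{\nu}c_\nu\,z_{i+1}^{\nu_{i+1}}\cdots z_m^{\nu_m}$ with $c_\nu\in\mathcal{A}$ and $\|c_\nu\|\to0$; since each $\rho(h_i)$ is a continuous operator it passes through the sum, and expanding $h_i\cdot c_\nu=\sum_{k\ge0}c_\nu^{(k)}z_i^{k}$ (globally analytic in $z_i$ by the hypothesis applied to $c_\nu$) and substituting produces the formal double expansion $\phi_i=\sum_{k,\nu}c_\nu^{(k)}\,z_i^{k}z_{i+1}^{\nu_{i+1}}\cdots z_m^{\nu_m}$.

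The step I expect to be the real obstacle is the convergence bookkeeping here: one must verify that the double family $(c_\nu^{(k)})_{k,\nu}$ satisfies $\|c_\nu^{(k)}\|\to 0$ as $k+|\nu|\to\infty$, so that it is summable in the non-archimedean sense, the rearrangement above is legitimate, and $\phi_i$ is genuinely a convergent power series in $(z_i,\dots,z_m)$, which closes the induction. To get this I would use that $H_i$ is a compact $p$-adic analytic group, so the operators $\rho(h_i)$ for $h_i\in H_i$ are uniformly bounded, say by $C_i$ (uniform boundedness principle, or directly from global analyticity of the $H_i$-action); since a Taylor coefficient of an $\mathcal{A}$-valued rigid-analytic function on the unit ball is bounded by that function's sup-norm over the ball, one gets $\|c_\nu^{(k)}\|\le C_i\|c_\nu\|$ for all $k$. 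A routine $\varepsilon$-argument then finishes it: given $\varepsilon>0$, choose $M$ with $\|c_\nu\|<\varepsilon/C_i$ for $|\nu|>M$ (so $\|c_\nu^{(k)}\|<\varepsilon$ for all $k$ and all such $\nu$), and for the finitely many $\nu$ with $|\nu|\le M$ use global analyticity of $h_i\cdot c_\nu$ to get $K$ with $\|c_\nu^{(k)}\|<\varepsilon$ for $k>K$; then $\|c_\nu^{(k)}\|<\varepsilon$ whenever $k+|\nu|>M+K$. Everything else --- the forward implication and the reduction of the $G$-action to an iterated composition of one-parameter actions --- is a formal consequence of the Lazard decomposition available for $p>n+1$, so this uniform-boundedness estimate, which guarantees that the Taylor coefficients keep decaying under composition, is the crux.
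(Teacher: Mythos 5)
Your proposal is correct and follows the same approach the paper invokes: the paper's proof is a pointer to the discussion after Lemma~3.4 of \cite{Clozel1}, which uses exactly this Lazard product decomposition of $G$ into one-parameter factors, the iterated composition of the factor actions, and the Gauss-norm/Banach--Steinhaus bookkeeping to verify convergence of the resulting multi-variable series. You have essentially supplied the details the paper outsources; the one simplification available here is that the uniform bound $C_i$ can be taken to be $1$, since the unipotent factors act by integral substitutions and the analyticity condition \ref{eq:unramifiedK} forces $|\chi_k(t)|=1$ on $1+p\Z_p$, so the operators $\rho(h_i)$ are isometries and Banach--Steinhaus is not strictly needed.
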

 \begin{proof}
 Follows from the same argument as in the discussion after lemma $3.4$ of \cite{Clozel1}.
 \end{proof}
 Thus, our goal is to verify the analyticity of the action of the diagonal subgroup, the $1$-parameter lower unipotent subgroups and the $1$-parameter upper unipotent subgroups of $G$ which are treated in lemmas $\ref{lem:lemmadiagonal1}, \text{ } \ref{lem:lemmalower1}$ and $\ref{lem:lemmaupper2}$ respectively.
 
 Let $A=(a_{i,j})_{i,j}$ be any matrix in $U$ (i.e. $a_{i,i}=1$ and $a_{i,j}=0$ for $i<j$) and  $T=diag(t_1,...,t_n)=\sum_{k=1}^nt_kE_{k,k}$ be any element in the diagonal $T_0 \cap G$, where $t_k \in 1+p\mathbb{Z}_p$.
 Assume $f \in I_{\loc}$, then the action of $T$ on $f$, given by \ref{eq:Gaction}, is 
 \begin{align*}
 T\cdot f(A)=f\Big(Diag(t_1^{-1},...,t_n^{-1})A\Big)&= f\Big((\sum_{k=1}^nt_k^{-1}E_{k,k})(\sum_{i,j=1}^na_{i,j}E_{i,j})\Big),\\
 &=f(\sum_{j,k=1}^nt_k^{-1}a_{k,j}E_{k,j}),\\
 &=f \Big( (\sum_{k,j=1}^nt_k^{-1}t_ja_{k,j}E_{k,j})(\sum_{j=1}^nt_j^{-1}E_{j,j}) \Big),\\
 &=f(\sum_{k,j=1}^nt_k^{-1}t_ja_{k,j}E_{k,j})\chi(t_1,...,t_n) \text{ } (\text{from } \ref{eq:chiaction}),
 \end{align*}
Interchanging indices $k \rightarrow i$, we obtain 
\begin{align}\label{eq:analyticexpression1}
(\sum_{i=1}^nt_iE_{i,i})\cdot f(\sum_{i,j=1}^na_{i,j}E_{i,j})=f(\sum_{i,j=1}^nt_i^{-1}t_ja_{i,j}E_{i,j})\chi(t_1,...,t_n)
\end{align}
with $a_{i,i}=1$, $a_{i,j}=0$ for $i<j$ and $t_i \equiv 1(\mod p)$.

Taking $f=1$ we see that $\chi(t_1,...,t_n)$ must be an analytic function. By \ref{eq:analyticexpression1}, for fixed $k \in [1,n]$ considering the action of the matrix $(t_kE_{k,k}+\sum_{i=1,i \neq k}^nE_{i,i})$ on $f$ we obtain,
\begin{align}
(t_kE_{k,k}+\sum_{i=1,i \neq k}^nE_{i,i})f(A)&=f(\sum_{\underset{u>v}{u,v \neq k}}^{}a_{u,v}E_{u,v}+a_{k,k}E_{k,k}+\sum_{j=1}^{k-1}t_k^{-1}a_{k,j}E_{k,j}+\sum_{i=k+1}^nt_ka_{i,k}E_{i,k}) \label{eq:diagonal1}\\
& \qquad \times \chi(1,...,t_k,...,1)  \label{eq:analyticexpressionone1} \\
&:=f(\mathcal{C})\chi(1,...,t_k,...,1) \label{eq:diag}
\end{align}
where $\mathcal{C}$ is the matrix $(\sum_{\underset{u>v}{u,v \neq k}}^{}a_{u,v}E_{u,v}+a_{k,k}E_{k,k}+\sum_{j=1}^{k-1}t_k^{-1}a_{k,j}E_{k,j}+\sum_{i=k+1}^nt_ka_{i,k}E_{i,k})$. Assume now that $f$ is globally analytic in the variables $a_{i,j}$ on $U$, that is, 
\begin{equation}\label{eq:fanalytic}
f(A)=\sum_{\nu \in \mathbb{N}^d}c_{\nu}a^{\nu},
\end{equation}
such that $c_{\nu} \in K$ and $|c_{\nu}| \rightarrow 0$.
Then with $t_k=1+p\xi_k$, $\xi_k \in \mathbb{Z}_p$, 
\begin{align}
f(\mathcal{C})&=\sum_{\nu}c_{\nu}(a_{k,k}^{\nu_{k,k}}\prod_{\underset{u>v}{u,v \neq k}}^{}a_{u,v}^{\nu_{u,v}})(\prod_{j=1}^{k-1}(t_k^{-1}a_{k,j})^{\nu_{k,j}})(\prod_{i=k+1}^n(t_ka_{i,k})^{\nu_{i,k}})\label{diago}\\
&=\sum_{\nu}c_{\nu}(a_{k,k}^{\nu_{k,k}}\prod_{\underset{u>v}{u,v \neq k}}^{}a_{u,v}^{\nu_{u,v}})(\prod_{j=1}^{k-1}(1+p\xi_k)^{-\nu_{k,j}}a_{k,j}^{\nu_{k,j}})(\prod_{i=k+1}^n(1+p\xi_k)^{\nu_{i,k}}a_{i,k}^{\nu_{i,k}}).\label{eq:biggest}
\end{align}
Recall that for $|v|<1$, $m \in \mathbb{N}$, we have $(1-v)^{-m}=\sum_{q=0}^{\infty}\binom{m+q-1}{q}v^q.$
Now, inserting the expressions \[(1+p\xi_k)^{-\nu_{k,j}}=\sum_{q_{k,j}=0}^{\infty}{{\nu_{k,j}+q_{k,j}-1}\choose{q_{k,j}}}(-p\xi_k)^{q_{k,j}}\] and $(1+p\xi_k)^{\nu_{i,k}}=\sum_{u_{i,k}=0}^{\nu_{i,k}}{{\nu_{i,k}}\choose{u_{i,k}}}p^{u_{i,k}}\xi_k^{u_{i,k}}$
in equation \ref{eq:biggest} we obtain, with $|q|:=q_{k,1}+\cdots+q_{k,k-1}$, $|u|=u_{k+1,k}+\cdots+u_{n,k}$ and $v_{max}=\prod_{i=k+1}^n\nu_{i,k}$,
\begin{align*}
f(\mathcal{C})&=\sum_{\nu}c_{\nu}(a_{k,k}^{\nu_{k,k}}\prod_{\underset{u>v}{u,v \neq k}}^{}a_{u,v}^{\nu_{u,v}})\Big(\prod_{j=1}^{k-1}(\sum_{q_{k,j}=0}^{\infty}{{\nu_{k,j}+q_{k,j}-1}\choose{q_{k,j}}}(-p\xi_k)^{q_{k,j}}a_{k,j}^{\nu_{k,j}})\Big)\\ 
&\qquad \qquad \qquad \qquad \times \Big(\prod_{i=k+1}^n\sum_{u_{i,k}=0}^{\nu_{i,k}}{{\nu_{i,k}}\choose{u_{i,k}}}p^{u_{i,k}}\xi_k^{u_{i,k}}a_{i,k}^{\nu_{i,k}}\Big)\\
&=\sum_{\nu}c_{\nu}(a_{k,k}^{\nu_{k,k}}\prod_{\underset{u>v}{u,v \neq k}}^{}a_{u,v}^{\nu_{u,v}})\Big(\sum_{N \geq 0}^{\infty}\xi_k^N(\sum_{\underset{}{|q|=N}}\prod_{j=1}^{k-1}{{\nu_{k,j}+q_{k,j}-1}\choose{q_{k,j}}}(-p)^{q_{k,j}}a_{k,j}^{\nu_{k,j}})\Big)\\ 
&\qquad \qquad \qquad \qquad \times \Big(\sum_{M=0}^{v_{max}}\xi_k^M(\sum_{|u|=M}\prod_{i=k+1}^n{{\nu_{i,k}}\choose{u_{i,k}}}p^{u_{i,k}}a_{i,k}^{\nu_{i,k}})\Big).\\
\end{align*}
Let $f_N$ and $g_M$ be defined by 
\begin{align}
f_N&=(\sum_{\underset{}{|q|=N}}\prod_{j=1}^{k-1}{{\nu_{k,j}+q_{k,j}-1}\choose{q_{k,j}}}(-p)^{q_{k,j}}a_{k,j}^{\nu_{k,j}}), \label{eq:fndefinition}\\
g_M&=(\sum_{|u|=M}\prod_{i=k+1}^n{{\nu_{i,k}}\choose{u_{i,k}}}p^{u_{i,k}}a_{i,k}^{\nu_{i,k}})\label{eq:gmdefinition}.
\end{align}
Then,
\begin{align}
f(\mathcal{C})&=\sum_{\nu}c_{\nu}(a_{k,k}^{\nu_{k,k}}\prod_{\underset{u>v}{u,v \neq k}}^{}a_{u,v}^{\nu_{u,v}})(\sum_{N \geq 0}^{\infty}\xi_k^Nf_N)(\sum_{M=0}^{v_{max}}\xi_k^Mg_M)\\
&=\sum_{m \geq 0}^{\infty}\xi_k^m\Big(\sum_{\nu}c_{\nu}(a_{k,k}^{\nu_{k,k}}\prod_{\underset{u>v}{u,v \neq k}}^{}a_{u,v}^{\nu_{u,v}})\sum_{N+M=m}f_Ng_M\Big). \label{eq:lardestseen}
\end{align}
Any element $f \in \mathcal{A}(U,K)$ is of the form $f=\sum_{\nu \in \mathbb{N}^d}c_{\nu}a^{\nu}$ with $\lim_{|\nu| \rightarrow \infty}|c_{\nu}|=0$. The space $\mathcal{A}(U,K)$ is a $K$-Banach space with the sup norm on $f$ defined by \[|f|=\sup |c_{\nu}|\]
(cf. \cite[chapter $2$]{Bosch1}). Recall that for any $K$-Banach space $V$ with norm $|\cdot |$, a representation $\pi$ of $G$ on $V$ is called a globally analytic representation if the map \[g \longmapsto g\cdot v =\pi(g)v\]
is globally analytic on $G$ for all $v \in V$. Thus, in coordinates $(x_1,...,x_l)$ with $l=\dim(G)$:
\[g \cdot v =\sum_kx^kv_k\] where $v_k \in V$ and $|v_k|\rightarrow 0$. Here $k=(k_1,...,k_l)$ and $x^k=x_1^{k_1} \cdots x_l^{k_l}, k_i \in \mathbb{N}$ (cf. \cite{Emertonbook}, \cite[section $2$]{Clozel1}).

Now, with $t_k=1+p\xi_k$, $\xi_k \in \mathbb{Z}_p$, in order to show that the action of the one-parameter diagonal subgroup $t_k(E_{k,k})+\sum_{\underset{i\neq k}{i=1}}^{n}E_{i,i}$ on $ f \in \mathcal{A}(U,K)$ is analytic we have to show that the map 
\begin{align*}
\mathbb{Z}_p & \rightarrow \mathcal{A}(U,K)\\
\xi_k &\longmapsto \Big((1+p\xi_k)E_{k,k}+\sum_{i=1,i \neq k}^nE_{i,i}\Big)f=f(\mathcal{C})\chi(1,..,1+p\xi_k,...,1)
\end{align*}
is a globally analytic map on $\mathbb{Z}_p$.
 The norm of the coefficient of $\xi_k^m$, in equation \ref{eq:lardestseen}, is \[|\Big(\sum_{\nu}c_{\nu}(a_{k,k}^{\nu_{k,k}}\prod_{\underset{u>v}{u,v \neq k}}^{}a_{u,v}^{\nu_{u,v}})\sum_{N+M=m}f_Ng_M\Big)|.\] 
 Notice that,  since $N,M \leq m$ and  $f_N,g_M \in \mathbb{Z}_p$ from \ref{eq:fndefinition} and \ref{eq:gmdefinition}, the quantity
  $(a_{k,k}^{\nu_{k,k}}\prod_{\underset{u>v}{u,v \neq k}}^{}a_{u,v}^{\nu_{u,v}}\sum_{N+M=m}f_Ng_M)$ has finite sum and product and hence lies in $\mathbb{Z}_p$. Hence, \[|\Big(\sum_{\nu}c_{\nu}(a_{k,k}^{\nu_{k,k}}\prod_{\underset{u>v}{u,v \neq k}}^{}a_{u,v}^{\nu_{u,v}})\sum_{N+M=m}f_Ng_M\Big)| \rightarrow 0\] 
 as  $|c_{\nu}| \rightarrow 0$ with $\nu \rightarrow \infty$. This gives the analyticity of the action $f \rightarrow f(\mathcal{C})$
We treat the analyticity of the character $\chi$ in general.
 Write  $\chi=(\chi_1,...,\chi_n)$, $\chi_i(1+pu_i)=e^{c_i\log(1+pu_i)}$ for $c_i \in K$, $u_i$ close to $0$, $i \in [1,n]$. The exponential is analytic (in $K$) in the domain $v_p(z) > \frac{e}{p-1}$ where $e=e(K)$ and $v_p$ is the normalized valuation, $v_p(p)=1$. Now, \[v_p(c_i\log(1+pu_i))=v_p(c_i)+1+v_p(u_i).\] So we must have $v_p(c_i)+1 >\frac{e}{p-1}$, i.e. 
\begin{align}\label{eq:unramifiedK}
v_p(c_i)&>\frac{e}{p-1}-1,\\
&=\frac{-p}{p-1}  \text{ (if } K \text{ is unramified)}.
\end{align}
We say that $\chi$ is "analytic" if and only if $c_i$'s verify these conditions and in the rest of this text we assume that our character $\chi$ is analytic.
It is easy to see that if $\chi$ is analytic, then $\chi(1,...,1+p\xi_k,...,1)$ is an  analytic function on $\xi_k$. The  character

\begin{align*}
\chi(1,...,1+p\xi_k,...,1)&=\chi_k(1+p\xi_k)=\sum_{n=0}^{\infty}c_n(1+p\xi_k)^n \qquad (\text{since } \chi_k \text{ is analytic})\\
&=\sum_{n=0}^{\infty}c_n\sum_{u=0}^n{{n}\choose{u}}p^u\xi_k^u\\
&=\sum_{u=0}^{\infty}\xi_k^u\Big(p^u\sum_{n\geq u}^{\infty}c_n{{n}\choose{u}}\Big).
\end{align*} 
The norm of the coefficient of $\xi_k^u$ is  $|p^u\sum_{n\geq u}^{\infty}c_n{{n}\choose{u}}|$ which goes to $0$ as $|c_n| \rightarrow 0$ with $n \rightarrow \infty$.
Thus, we have shown
\begin{lemma}\label{lem:lemmadiagonal1}
Under the  hypothesis \ref{eq:unramifiedK}, for each $k \in [1,n]$, the action of the one-parameter diagonal subgroup $(t_kE_{k,k}+\sum_{i=1,i \neq k}^nE_{i,i})$ of $G$ on $\mathcal{A}(\mathbb{Z}_p^{\frac{n(n-1)}{2}},K)$ given by \ref{eq:analyticexpressionone1} is an analytic action.
\end{lemma}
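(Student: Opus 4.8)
The plan is to build on the explicit factorisation obtained in \ref{eq:diagonal1}--\ref{eq:diag}. Writing $t_k=1+p\xi_k$ with $\xi_k\in\mathbb{Z}_p$, the action of the one-parameter diagonal subgroup $\bigl(t_kE_{k,k}+\sum_{i\neq k}E_{i,i}\bigr)$ on $f\in\mathcal{A}(U,K)$ is the product $f(\mathcal{C})\,\chi(1,\dots,t_k,\dots,1)$, where $\mathcal{C}$ depends on $\xi_k$ only through the entries $t_k^{-1}a_{k,j}$ and $t_ka_{i,k}$. Since $\mathcal{A}(U,K)$ with the sup norm is a $K$-Banach algebra, and the product of a globally analytic $\mathcal{A}(U,K)$-valued function on $\mathbb{Z}_p$ with a globally analytic scalar function on $\mathbb{Z}_p$ is again globally analytic, it is enough to prove separately that \emph{(i)} $\xi_k\mapsto f(\mathcal{C})$ is a globally analytic $\mathcal{A}(U,K)$-valued function on $\mathbb{Z}_p$, and \emph{(ii)} $\xi_k\mapsto\chi(1,\dots,1+p\xi_k,\dots,1)=\chi_k(1+p\xi_k)$ is a globally analytic $K$-valued function on $\mathbb{Z}_p$; by Lemma~\ref{lem:sufficeaction} this diagonal factor is one of the pieces the global analyticity of the full $G$-action will be reduced to.

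For \emph{(i)} I would substitute $t_k=1+p\xi_k$ into the expansion of $f(\mathcal{C})$, replace each negative power $(1+p\xi_k)^{-\nu_{k,j}}$ by its (non-archimedean convergent) binomial series and each positive power $(1+p\xi_k)^{\nu_{i,k}}$ by its finite binomial expansion, and then regroup the iterated sums into a single power series $f(\mathcal{C})=\sum_{m\ge0}\xi_k^m F_m$ exactly as in \ref{eq:lardestseen}, with $F_m=\sum_{\nu}c_\nu a^\nu\sum_{N+M=m}f_Ng_M$ and $f_N,g_M$ given by \ref{eq:fndefinition}--\ref{eq:gmdefinition}. The decisive point is the $p$-divisibility visible in those formulas: every monomial of $f_N$ carries the scalar factor $(-p)^N$ and every monomial of $g_M$ the factor $p^M$, so $\sum_{N+M=m}f_Ng_M$ has all of its coefficients in $p^m\mathbb{Z}_p$. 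Hence each $F_m$ is a genuine element of $\mathcal{A}(U,K)$ — its coefficients are of the form $c_\nu$ times an element of $p^m\mathbb{Z}_p$, so they tend to $0$ as $|\nu|\to\infty$ — and $|F_m|\le p^{-m}|f|\to0$ as $m\to\infty$, which is precisely global analyticity of $\xi_k\mapsto f(\mathcal{C})$.

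For \emph{(ii)} I would use the hypothesis \ref{eq:unramifiedK} on $v_p(c_k)$, which was imposed exactly so that $z\mapsto\chi_k(1+pz)=e^{c_k\log(1+pz)}$ is analytic on $\mathbb{Z}_p$: expanding $\chi_k(1+p\xi_k)=\sum_{n\ge0}c_n(1+p\xi_k)^n$ and reorganising gives $\chi_k(1+p\xi_k)=\sum_{u\ge0}\xi_k^u\bigl(p^u\sum_{n\ge u}c_n\binom{n}{u}\bigr)$, whose $\xi_k^u$-coefficient has norm at most $p^{-u}\sup_n|c_n|\to0$. Together with \emph{(i)} and the stability of global analyticity under multiplication, this proves the lemma.

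I expect the one genuinely delicate step to be the bookkeeping in \emph{(i)}: one must check that the triple reorganisation — over the multi-indices $\nu$, over the summation indices $q_{k,j}$ of the infinite negative-binomial series, and over the $u_{i,k}$ — into a single power series in $\xi_k$ with coefficients in the Banach space $\mathcal{A}(U,K)$ is permissible (here one uses that in the non-archimedean setting the order of summation is irrelevant once the terms tend to $0$), and that the Banach norms of the coefficients $F_m$ really tend to $0$ rather than merely staying bounded; the divisibility of $f_N$ by $p^N$ and of $g_M$ by $p^M$ is what secures both. The character computation \emph{(ii)} is routine once the valuation condition \ref{eq:unramifiedK} is in force.
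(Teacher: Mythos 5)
Your proposal mirrors the paper's proof: the same substitution $t_k = 1+p\xi_k$, the same binomial and negative-binomial expansions of $(1+p\xi_k)^{\pm\nu}$, the same regrouping into $\sum_{m\ge 0}\xi_k^m F_m$ as in \ref{eq:lardestseen}, and the same reorganisation of $\chi_k(1+p\xi_k)$ into $\sum_u \xi_k^u\bigl(p^u\sum_{n\ge u}c_n\binom{n}{u}\bigr)$. Your explicit remark that $f_N$ and $g_M$ carry the scalar factors $(-p)^N$ and $p^M$, so that every coefficient of $\sum_{N+M=m}f_N g_M$ lies in $p^m\mathbb{Z}_p$ and hence $|F_m|\le p^{-m}|f|\to 0$, is a welcome sharpening of the paper's more compressed justification that the coefficient of $\xi_k^m$ tends to $0$ in $\mathcal{A}(U,K)$.
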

For $y \in \mathbb{Z}_p$ and $i>j$, $i,j$ fixed between $1,...,n$, the action of the $1$-parameter (rigid-analytic) subgroup $(1+yE_{i,j})$ on $f(A)$, given by \ref{eq:Gaction} is 
\begin{align}\label{eq:lowertriangularanalytic}
(1+yE_{i,j})f(A)&=f\Big((1+yE_{i,j})^{-1}A \Big)=f\Big((1-yE_{i,j})A\Big),\\
&=f\Big((1-yE_{i,j})(\sum\limits_{\substack{k \geq l\\k,l \in [1,n]}}a_{k,l}E_{k,l})\Big),\\
&=f\Big( \sum\limits_{\substack{k \geq l\\k,l \in [1,n]}}a_{k,l}E_{k,l}-\sum_{l=1,...,j}ya_{j,l}E_{i,l}\Big):=f(\mathcal{B}),\label{arparina}
\end{align}
where $\mathcal{B}$ is the matrix $\sum\limits_{\substack{k \geq l\\k,l \in [1,n]}}a_{k,l}E_{k,l}-\sum_{l=1}^jya_{j,l}E_{i,l}$.
One can easily see that the matrix $\mathcal{B}=(b_{u,v})$ is lower unipotent and differs from matrix $A$ only in the first $j$ entries of its $i^{th}$ row. In particular, $b_{i,v}=a_{i,v}-ya_{j,v}$ for all $v \in [1,j], (a_{j,j}=1)$ and all other $b_{u,v}$ are the same as $a_{u,v}$ (recall that $A$ is lower unipotent). 

Now, let $f$ be a globally analytic function on $U$ as in \ref{eq:fanalytic}. That is $f(a)=\sum_{\nu \in \mathbb{N}^d}c_{\nu}a^{\nu}$ with $a^{\nu}=a_{2,1}^{\nu_{2,1}}\cdots a_{n,n-1}^{\nu_{n,n-1}}$ and $|c_{\nu}| \rightarrow 0$. Then, we have to show that $(1+yE_{i,j})f=f(\mathcal{B})$ gives an analytic map 
\begin{align*}
\mathbb{Z}_p &\rightarrow \mathcal{A}(U,K)\\
y &\rightarrow (1+yE_{i,j})f=f(\mathcal{B}).
\end{align*}
The power series \[f(\mathcal{B})=\sum_{\nu}c_{\nu}\Big((\prod_{\underset{u=i \implies v>j}{u>v}}^{}a_{u,v}^{\nu_{u,v}})(\prod_{k=1}^j(a_{i,k}-ya_{j,k})^{\nu_{i,k}})\Big).\]
For each $k\in [1,j]$, inserting the expansion \[(a_{i,k}-ya_{j,k})^{\nu_{i,k}}=\sum_{m_{i,k}=0}^{\nu_{i,k}}{\nu_{i,k}\choose{m_{i,k}}}y^{m_{i,k}}(-a_{j,k})^{m_{i,k}}a_{i,k}^{\nu_{i,k}-m_{i,k}}\] into $f(\mathcal{B})$ we obtain, for $M=\prod_{k=1}^j\nu_{i,k}, |m|=m_{i,1}+\cdots +m_{i,j}$,
\begin{align*}
f(\mathcal{B})&=\sum_{\nu}c_{\nu}\Big((\prod_{\underset{u=i \implies v>j}{u>v}}^{}a_{u,v}^{\nu_{u,v}})(\prod_{k=1}^j(\sum_{m_{i,k}=0}^{\nu_{i,k}}{\nu_{i,k}\choose{m_{i,k}}}y^{m_{i,k}}(-a_{j,k})^{m_{i,k}}a_{i,k}^{\nu_{i,k}-m_{i,k}}))\Big)\\
&=\sum_{\nu}c_{\nu}\Big((\prod_{\underset{u=i \implies v>j}{u>v}}^{}a_{u,v}^{\nu_{u,v}})(\sum_{N=0}^My^N(\sum_{\underset{m_{i,\star}\in [0,\nu_{i,\star}]}{|m|=N}}\prod_{k=1}^j{\nu_{i,k}\choose{m_{i,k}}}(-a_{j,k})^{m_{i,k}}a_{i,k}^{\nu_{i,k}-m_{i,k}}))\Big)\\
&=\sum_{\nu}c_{\nu}\Big(\sum_{N=0}^My^N((\prod_{\underset{u=i \implies v>j}{u>v}}^{}a_{u,v}^{\nu_{u,v}})\sum_{\underset{m_{i,\star}\in [0,\nu_{i,\star}]}{|m|=N}}\prod_{k=1}^j{\nu_{i,k}\choose{m_{i,k}}}(-a_{j,k})^{m_{i,k}}a_{i,k}^{\nu_{i,k}-m_{i,k}})\Big)\\
&=\sum_{N=0}^{M}y^N\Big(\sum_{\nu}c_{\nu}((\prod_{\underset{u=i \implies v>j}{u>v}}^{}a_{u,v}^{\nu_{u,v}})\sum_{\underset{m_{i,\star}\in [0,\nu_{i,\star}]}{|m|=N}}\prod_{k=1}^j{\nu_{i,k}\choose{m_{i,k}}}(-a_{j,k})^{m_{i,k}}a_{i,k}^{\nu_{i,k}-m_{i,k}})\Big)\\
&=\sum_{N=0}^{M}y^Nf_N
\end{align*}
where $$f_N:=\sum_{\nu}c_{\nu}\Big((\prod_{\underset{u=i \implies v>j}{u>v}}^{}a_{u,v}^{\nu_{u,v}})\sum_{\underset{m_{i,\star}\in [0,\nu_{i,\star}]}{|m|=N}}\prod_{k=1}^j{\nu_{i,k}\choose{m_{i,k}}}(-a_{j,k})^{m_{i,k}}a_{i,k}^{\nu_{i,k}-m_{i,k}}\Big).$$
Define \[s(N, \nu):=\Big((\prod_{\underset{u=i \implies v>j}{u>v}}^{}a_{u,v}^{\nu_{u,v}})\sum_{\underset{m_{i,\star}\in [0,\nu_{i,\star}]}{|m|=N}}\prod_{k=1}^j{\nu_{i,k}\choose{m_{i,k}}}(-a_{j,k})^{m_{i,k}}a_{i,k}^{\nu_{i,k}-m_{i,k}}\Big)\] such that $f_N=\sum_{\nu}c_{\nu}s(N,\nu)$. Notice that since 
$m_{i,k}\leq \nu_{i,k}$ for all $k \in [1,j]$, the sum in $s(N, \nu)$ is a finite sum and thus $s(N, \nu)$ lies in $\mathbb{Z}_p$. Therefore,  the norm of the coefficient of $y^N$ is $|f_N|=|\sum_{\nu}c_{\nu}s(N,\nu)|$ which goes to $0$ as $|c_{\nu}| \rightarrow 0$ with $|\nu| \rightarrow \infty$. This gives the analyticity of the map $y \rightarrow (1+E_{i,j})f=f(\mathcal{B})$.

Therefore, we have shown,
\begin{lemma}\label{lem:lemmalower1}
For $y\in \mathbb{Z}_p$ and $i>j$, the action of the lower unipotent (rigid-analytic) $1$-parameter subgroup $(1+yE_{i,j})$  of $G$ on $f \in \mathcal{A}(\mathbb{Z}_p^{\frac{n(n-1)}{2}},K)$, given by \ref{eq:lowertriangularanalytic} is an analytic action.
\end{lemma}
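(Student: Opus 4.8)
The plan is to compute the action \ref{eq:lowertriangularanalytic} directly on a globally analytic $f$ and exhibit the result as a convergent power series in $y$ with coefficients in $\mathcal{A}(U,K)$ tending to zero in sup-norm. First I would check that $\mathcal{B}:=(1-yE_{i,j})A$ still lies in $U$ whenever $A\in U$ and $y\in\mathbb{Z}_p$: left multiplication by $1-yE_{i,j}$ subtracts $y$ times the $j$-th row of $A$ from its $i$-th row and leaves all other rows fixed, so (using $i>j$ and that $A$ is lower unipotent, whence $a_{j,i}=0$) the diagonal is unchanged, only the entries $b_{i,l}=a_{i,l}-ya_{j,l}$ for $l\le j<i$ are modified, and these stay in $\mathbb{Z}_p$ and strictly below the diagonal. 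Thus $f(\mathcal{B})$ is well defined, there is no twist by $\chi$ here (contrary to the diagonal case of Lemma~\ref{lem:lemmadiagonal1}), and $(1+yE_{i,j})\cdot f=f(\mathcal{B})$.

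Next I would insert $b_{i,k}=a_{i,k}-ya_{j,k}$ (with $a_{j,j}=1$) into $f(A)=\sum_{\nu\in\mathbb{N}^d}c_\nu a^\nu$ and expand each factor $(a_{i,k}-ya_{j,k})^{\nu_{i,k}}$, $k=1,\dots,j$, by the finite binomial theorem. For each fixed $\nu$ this produces a polynomial in $y$ of degree $M_\nu:=\sum_{k=1}^{j}\nu_{i,k}\le|\nu|$, so after re-summing over $\nu$ one obtains $f(\mathcal{B})=\sum_{N\ge 0}y^N f_N$ with $f_N=\sum_{\nu}c_\nu\,s(N,\nu)$, where $s(N,\nu)\in\mathcal{A}(U,K)$ is the combinatorial expression in the variables $a_{u,v}$ appearing in the display preceding the lemma. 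The rearrangement is legitimate because in the ultrametric every term of the doubly indexed sum has norm at most $|c_\nu|$, which tends to $0$ as $|\nu|\to\infty$, so the unordered family is summable and may be grouped by the power of $y$.

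Finally I would use the two structural features of $s(N,\nu)$: it vanishes unless $N\le M_\nu\le|\nu|$, and it is a finite $\mathbb{Z}$-linear combination of monomials in the $a_{u,v}$, hence an element of sup-norm $\le 1$ in the Tate algebra $\mathcal{A}(U,K)$. Together these give $|f_N|\le\sup_{|\nu|\ge N}|c_\nu|$, which tends to $0$ as $N\to\infty$ since $|c_\nu|\to 0$; this is exactly the statement that $y\mapsto f(\mathcal{B})$ is a globally analytic map $\mathbb{Z}_p\to\mathcal{A}(U,K)$, i.e. that the action of the one-parameter subgroup $(1+yE_{i,j})$ is analytic. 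There is no serious obstacle; the only points requiring care are the bookkeeping in the re-summation and the observation that the norm estimate needs no cancellation, the coefficients $s(N,\nu)$ being already integral — the actual computation being the routine one carried out in the display above.
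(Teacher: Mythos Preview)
Your proposal is correct and follows essentially the same route as the paper: compute $\mathcal{B}=(1-yE_{i,j})A\in U$, expand each $(a_{i,k}-ya_{j,k})^{\nu_{i,k}}$ by the finite binomial theorem, regroup as $\sum_N y^N f_N$ with $f_N=\sum_\nu c_\nu\,s(N,\nu)$, and use that $s(N,\nu)$ has integral coefficients (sup-norm $\le 1$) and vanishes unless $N\le|\nu|$ to conclude $|f_N|\to 0$. If anything, you are slightly more explicit than the paper about the constraint $N\le|\nu|$ being what actually forces $|f_N|\le\sup_{|\nu|\ge N}|c_\nu|\to 0$, whereas the paper leaves this step somewhat implicit.
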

It remains to check the analyticity of the action \ref{eq:Gaction} by triangular superior matrices of the form $(1+yE_{i,j})$ for $i<j,i,j \in [1,n], y \in p\mathbb{Z}_p$. Recall that the action of $(1+yE_{i,j})$ on $f \in I_{\loc}$ given by \ref{eq:Gaction}, is 
\begin{equation*}
(1+yE_{i,j})f(A)=f\Big((1+yE_{i,j})^{-1}A \Big)=f\Big((1-yE_{i,j})A\Big).
\end{equation*}
Recall the  action of $Q_0$ given by \ref{eq:chiaction}, that is, $f(gb)\equiv \chi(b^{-1})f(g)$ with $b \in Q_0$. Hence, our objective is to write the matrix $(1-yE_{i,j})A$ as the product of two matrices $X$ and $Z$ with $X \in U$ and $Z \in Q_0$, that is:
\begin{equation*}
(1-yE_{i,j})A=XZ,
\end{equation*}
where $X$ is a lower unipotent matrix with entries in $\mathbb{Z}_p$ and $Z$ is a upper triangular matrix with diagonal elements in $1+p\mathbb{Z}_p$ and such that the elements above the diagonal have entries in $p\mathbb{Z}_p$. 
\begin{lemma}\label{lem:biglemma}
For $i <j$ and $ y \in p\mathbb{Z}_p$, there exists a unique matrix decomposition $(1-yE_{i,j})A=XZ$ with $X=(x_{k,l})_{k,l}\in U$ and $Z=(z_{r,s})_{r,s} \in Q_0$. Also, 
\begin{enumerate}
 \item all the diagonal elements $z_{r,r}$  of $Z$  are of the form  $\frac{1-yh_{{r,r}}(y,a)}{1-yg_{{r,r}}(y,a)}$,
\item  all the  elements $z_{r,s}$,  for $r<s$, of $Z$   are of the form $\frac{yh_{{r,s}}(y,a)}{1-yg_{{r,s}}(y,a)}$,
\item all the elements $x_{k,l}$  with $k>l$  of the lower triangular unipotent matrix $X$  are of the form 
$\frac{h_{{k,l}}(y,a)}{1-yg_{{k,l}}(y,a)},$
\end{enumerate}
where $h_{\star,\star}(y,a)$ and $g_{\star, \star}(y,a)$ are polynomial functions with integral coefficients in $y$ and $a_{2,1},a_{3,1},a_{3,2},...,a_{n,n-1}$ (entries of the lower unipotent matrix $A$).
\end{lemma}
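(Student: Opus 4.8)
The plan is to recognize the identity $(1-yE_{i,j})A=XZ$ as a Gauss (``big cell'', or $LU$) factorization and to read the entries off the classical minor formulas. Put $M=(1-yE_{i,j})A$. Since $i<j$ and $A\in U$ is lower unipotent, $E_{i,j}A$ has $i$-th row equal to the $j$-th row of $A$ and all other rows zero, so $M$ coincides with $A$ except that its $i$-th row becomes (row $i$ of $A$) $-\,y\,$(row $j$ of $A$); in particular every entry of $M$ is a polynomial in $y$ and the variables $a_{\star,\star}$ with integer coefficients, of degree $\le 1$ in $y$.

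First I would control the leading principal minors of $M$. Write $M_{[m]}$ for the $m\times m$ leading principal submatrix and $\Delta_m=\det M_{[m]}$ (with $\Delta_0=1$). For $m<i$ we have $M_{[m]}=A_{[m]}$, a lower unipotent matrix, so $\Delta_m=1$. For $m\ge i$ the variable $y$ enters $M_{[m]}$ only through its $i$-th row, linearly; expanding by multilinearity along that row and using $\det A_{[m]}=1$ yields $\Delta_m=1-y\,c_m(a)$ with $c_m\in\mathbb{Z}[a]$ (and $c_m=0$ for $m<i$). Hence, for $y\in p\mathbb{Z}_p$ and $a_{\star,\star}\in\mathbb{Z}_p$, each $\Delta_m$ lies in $1+p\mathbb{Z}_p\subset\mathbb{Z}_p^{\times}$; all leading principal minors of $M$ being nonzero, the factorization $M=XZ$ with $X$ unit lower triangular and $Z$ upper triangular exists and is unique, and moreover has all entries in $\mathbb{Z}_p$ since the pivots $\Delta_m/\Delta_{m-1}$ are units.

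Next I would invoke the standard closed forms for the entries of an $LU$ factorization, which follow immediately from the block identity $M_{[m]}=X_{[m]}Z_{[m]}$ and its ``bordered'' variants: for $r\le s$ and $k\ge l$,
\[
z_{r,s}=\frac{\det M\!\left(\begin{smallmatrix}1,\dots,r-1,\,r\\ 1,\dots,r-1,\,s\end{smallmatrix}\right)}{\Delta_{r-1}},
\qquad
x_{k,l}=\frac{\det M\!\left(\begin{smallmatrix}1,\dots,l-1,\,k\\ 1,\dots,l-1,\,l\end{smallmatrix}\right)}{\Delta_{l}},
\]
where $M\!\left(\begin{smallmatrix}I\\ J\end{smallmatrix}\right)$ denotes the minor on rows $I$ and columns $J$. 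The denominators are leading principal minors, hence of the asserted shape $1-y\,g_{\star,\star}(y,a)$ with $g$ an integral-coefficient polynomial — in fact $g_{r,s}=c_{r-1}$ and $g_{k,l}=c_l$, which do not even involve $y$. For the numerators: that of $z_{r,r}$ is $\Delta_r=1-y\,c_r(a)$, of the form $1-y\,h_{r,r}(y,a)$; that of $x_{k,l}$ for $k>l$ is a polynomial $h_{k,l}(y,a)\in\mathbb{Z}[y,a]$; and for $z_{r,s}$ with $s>r$ the numerator $N$ satisfies $N|_{y=0}=0$, since at $y=0$ one has $M=A$ and the submatrix of the lower triangular matrix $A$ on rows $\{1,\dots,r\}$ and columns $\{1,\dots,r-1,s\}$ has a zero last column (column $s>r$). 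Thus $y\mid N$ in $\mathbb{Z}[y,a]$ and $N=y\,h_{r,s}(y,a)$ with $h_{r,s}\in\mathbb{Z}[y,a]$.

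Finally I would collect the integrality and congruence statements. As $y\in p\mathbb{Z}_p$, $a_{\star,\star}\in\mathbb{Z}_p$ and the $g_{\star,\star}$ have integer coefficients, each denominator $1-y\,g_{\star,\star}(y,a)$ lies in $1+p\mathbb{Z}_p\subset\mathbb{Z}_p^{\times}$, so division by it preserves $\mathbb{Z}_p$ and preserves congruences modulo $p$. Hence $x_{k,l}\in\mathbb{Z}_p$, i.e.\ $X\in U$; $z_{r,r}=(1-y\,h_{r,r})/(1-y\,g_{r,r})\in 1+p\mathbb{Z}_p$; and $z_{r,s}=y\,h_{r,s}/(1-y\,g_{r,s})\in p\mathbb{Z}_p$ for $r<s$, i.e.\ $Z\in Q_0$ — which is the assertion. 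I expect the only genuine work to be the bookkeeping in the third step, namely fixing the precise row/column ranges in the bordered minors and verifying the degeneration at $y=0$; existence and uniqueness drop out once the leading principal minors are recognized as units, and no delicate $p$-adic estimate enters.
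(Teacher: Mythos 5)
Your proof is correct and takes a cleaner, genuinely different route. The paper proves the lemma by explicit Gaussian elimination: a block reduction (to the case $i=1$), then alternating row and column sweeps, tracking by induction the rational-function form of each new $x_{k,l}$ or $z_{r,s}$ produced at each stage. You instead invoke the classical closed-form bordered-minor formulas for the Doolittle $LU$ factorization. Your key observation --- that $M=(1-yE_{i,j})A$ differs from $A$ only in row $i$, and there only linearly in $y$, so that every leading principal minor has the form $\Delta_m=1-y\,c_m(a)$ with $c_m\in\mathbb{Z}[a]$ --- gives at once existence and uniqueness (all pivots are units in $\mathbb{Z}_p$ when $y\in p\mathbb{Z}_p$) and, via the minor formulas, the asserted shapes of the entries, once one checks the vanishing at $y=0$ of the numerator of $z_{r,s}$ for $s>r$. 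Your argument in fact sharpens the statement: the denominator polynomials $g_{\star,\star}$ are leading principal minors and do not involve $y$ at all, which is a cleaner form than the paper's Lemma~\ref{lem:numden}(i)--(iii) recovers later by re-running its induction. The trade-off is that the paper's stage-by-stage recursion is deliberately set up to be reused for the more delicate combinatorial facts (iv)--(vi) of Lemma~\ref{lem:numden} about which variables $a_{\star,l}$ can occur; your determinantal approach could also yield those, but would require translating them into assertions about the bordered minors.
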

\begin{proof}
We prove the lemma by an easy inductive argument. The base case $n=2$ is clear from the matrix equation
\begin{align*}
 \left( \begin{array}{cc}
1 & -y \\
0 & 1
\end{array} \right)
\left( \begin{array}{cc}
1 & 0 \\
a_{2,1} & 1
\end{array} \right)
&=\left( \begin{array}{cc}
1-ya_{2,1} & -y \\
a_{2,1} & 1
\end{array} \right)
=\left( \begin{array}{cc}
1 & 0 \\
x_{2,1} & 1
\end{array} \right)
\left( \begin{array}{cc}
z_{1,1} & z_{1,2} \\
0 & z_{2,2}
\end{array} \right) \\
&=\left( \begin{array}{cc}
z_{1,1} & z_{1,2} \\
z_{1,1}x_{2,1} & z_{2,2}+z_{1,2}x_{2,1}
\end{array} \right)
\end{align*}
with $x_{2,1}=\frac{a_{2,1}}{1-ya_{2,1}}, z_{1,1}=1-ya_{2,1},z_{1,2}=-y,z_{2,2}=\frac{1}{1-y{a_{2,1}}}$. Assume, by induction hypothesis that our lemma is true for $GL(n-1)$. We show it for $GL(n)$.
Let us first suppose that $i>1$, that is 
\[
(1-yE_{i,j})=
\left(
\begin{array}{c|c}
  1 & 0 \cdots 0 \\ \hline
  0 & \raisebox{-15pt}{{\mbox{{$1-yE^{\prime}$}}}} \\[-4ex]
  \vdots & \\[-0.5ex]
  0 &
\end{array}
\right)
 \text{ } (\text{with some elementary matrix } E^{\prime}, 1-yE^{\prime}\in GL(n-1))
\]
The matrix $A$, being lower unipotent, can be written in the following block form:
\[
A=
\left(
\begin{array}{c|c}
  1 & 0 \cdots 0 \\ \hline
  a_{2,1} & \raisebox{-15pt}{{\mbox{{$A^{\prime}$}}}} \\[-4ex]
  \vdots & \\[-0.5ex]
  a_{n,1} &
\end{array}
\right)
 \text{ } (\text{with } A^{\prime} \in GL(n-1))
\]
Setting $\underbar{a}$ to be the column vector $
\left(
\begin{array}{c}
a_{2,1}\\
a_{3,1}\\
\vdots\\
a_{n,1}
\end{array}
\right),$ 

\[
(1-yE_{i,j})A=
\left(
\begin{array}{c|c}
  1 & 0 \cdots 0 \\ \hline
  0 & \raisebox{-15pt}{{\mbox{{$1-yE^{\prime}$}}}} \\[-4ex]
  \vdots & \\[-0.5ex]
  0 &
\end{array}
\right)
\left(
\begin{array}{c|c}
  1 & 0 \cdots 0 \\ \hline
  a_{2,1} & \raisebox{-15pt}{{\mbox{{$A^{\prime}$}}}} \\[-4ex]
  \vdots & \\[-0.5ex]
  a_{n,1} &
\end{array}
\right)
=\left(
\begin{array}{c|c}
1 & 0 \\
\hline
(1-yE^{\prime})\underbar{a} & (1-yE^{\prime})A^{\prime}
\end{array}
\right)
\]
We want to decompose the above matrix in the form 

\[
\left(
\begin{array}{c|c}
1 & 0 \\
\hline
(1-yE^{\prime})\underbar{a} & (1-yE^{\prime})A^{\prime}
\end{array}
\right)
=\left(
\begin{array}{c|c}
  1 & 0 \cdots 0 \\ \hline
  x_{2,1} & \raisebox{-15pt}{{\mbox{{$X^{\prime}$}}}} \\[-4ex]
  \vdots & \\[-0.5ex]
  x_{n,1} &
\end{array}
\right)
\left(
\begin{array}{c|c}
  z_{1,1} & z_{1,2} \cdots z_{1,n} \\ \hline
  0 & \raisebox{-15pt}{{\mbox{{$Z^{\prime}$}}}} \\[-4ex]
  \vdots & \\[-0.5ex]
  0 &
\end{array}
\right)
\]
with $x_{2,1},...,x_{n,1} \in \mathbb{Z}_p$, $z_{1,1} \in 1+p\mathbb{Z}_p$ and $z_{1,2},...,z_{1,n} \in p\mathbb{Z}_p$. 
Denote $\underbar{z}$  to be the row vector $[z_{1,2},...,z_{1,n}]$,  $\underbar{x}$ to be the column vector $
\left(
\begin{array}{c}
x_{2,1}\\
x_{3,1}\\
\vdots\\
x_{n,1}
\end{array}
\right)$. Hence, we want to solve 
\[
\left(
\begin{array}{c|c}
1 & 0 \\
\hline
(1-yE^{\prime})\underbar{a} & (1-yE^{\prime})A^{\prime}
\end{array}
\right)
=\left(
\begin{array}{c|c}
z_{1,1} & \underbar{z} \\
\hline
z_{1,1}\underbar{x} & \underbar{x}\cdot \underbar{z}+X^{\prime}Z^{\prime}
\end{array}
\right)
\]
So we must have 
\begin{enumerate}
\item $z_{1,1}=1$,
\item $\underbar{z}=0$,
\item $z_{1,1}\underbar{x}=\underbar{x}=(1-yE^{\prime})\underbar{a}$ (using $z_{1,1}=1$ from (1)),
\item $\underbar{x} \cdot \underbar{z}+X^{\prime}Z^{\prime}=X^{\prime}Z^{\prime}=(1-yE^{\prime})A^{\prime}$   (as $\underbar{z}=0$ from (2)).
\end{enumerate}
By the induction hypothesis, we can find $X^{\prime}$ and $Z^{\prime}$ satisfying $(4)$ with entries in as lemma \ref{lem:biglemma}. Also, $(3)$ is of the form 
\[
\begin{pmatrix}
1\\
&&&&-y\\
&&&&\ddots\\
&&&&&1\\
\end{pmatrix}
\left(
\begin{array}{c}
a_{2,1}\\
a_{3,1}\\
\vdots\\
a_{n,1}
\end{array}
\right)
=\left(
\begin{array}{c}
x_{2,1}\\
x_{3,1}\\
\vdots\\
x_{n,1}
\end{array}
\right)
\] 

Clearly, we can solve $x_{2,1},..,x_{n,1}$ from the above matrix equation satisfying lemma \ref{lem:biglemma} and in fact the solutions do not have any denominators.

So by induction we are reduced to the case $i=1$, that is, when 
\[
(1-yE_{i,j})=
\left(
\begin{array}{c|c}
  1 &  0 \cdots -y \cdots  0 \\ \hline
  0 & \raisebox{-15pt}{{\mbox{{$1$}}}} \\[-4ex]
  \vdots & \\[-0.5ex]
  0 &
\end{array}
\right)
\]
Our goal is to solve, for $X$ and $Z$, the following matrix equation:
\begin{equation}\label{eq:XZRHS}
(1-yE_{1,j})A=XZ.
\end{equation}
Expanding right hand side of \ref{eq:XZRHS}, we obtain
\begin{align*}
 \mathcal{B}=(b_{u,v})_{u,v}&=XZ=\Big(1+\sum\limits_{\substack{k\in[1,n]\\l \in [1,k-1]}}x_{k,l}E_{k,l}\Big)\Big(\sum\limits_{\substack{r \in [1,n]\\s \in [r,n]}}z_{r,s}E_{r,s}\Big)\\
 &=\sum\limits_{\substack{r \in [1,n]\\s \in [r,n]}}z_{r,s}E_{r,s}+\sum\limits_{\substack{k \in [1,n]\\r \in [1,k-1]\\s \in [r,n]}}x_{k,r}z_{r,s}E_{k,s}.
 \end{align*}
 Therefore,
  \begin{equation}\label{eq:duv}
      b_{u,v}=
      \begin{cases}
        \sum_{r=1}^vx_{u,r}z_{r,v}, & \text{if}\ u>v \\
        z_{u,v}+\sum_{r=1}^{u-1}x_{u,r}z_{r,v}, & \text{if}\ u \leq v 
      \end{cases}
    \end{equation}
  
Recall that our matrix $A=\sum_{k \geq l}a_{k,l}E_{k,l}$ is lower unipotent, that is $a_{k,k}=1 $ for all $k \in [1,n]$ and $a_{k,l}=0$ for $k<l$. Expanding the left hand side of \ref{eq:XZRHS}, we obtain
\begin{align*}
(1-yE_{1,j})A&=(1-yE_{1,j})(\sum_{k \geq l}a_{k,l}E_{k,l})= \sum_{k \geq l}a_{k,l}E_{k,l}-\sum_{l=1}^jya_{j,l}E_{1,l},\\
&=\sum\limits_{\substack{k \in [1,n]\\l\in [1,k]\\k \neq 1}}a_{k,l}E_{k,l}+\sum_{l=2}^j(-ya_{j,l})E_{1,l}+(1-ya_{j,1})E_{1,1} .
\end{align*}
Note that the $1^{st}$ row of the matrix $(1-yE_{1,j})A$ is $$\sum_{l=2}^j(-ya_{j,l})E_{1,l}+(1-ya_{j,1})E_{1,1}.$$
From \ref{eq:XZRHS}, the matrices $(1-yE_{1,j})A$ and $\mathcal{B}=(b_{u,v})_{u,v}$ are equal. Thus, equating $b_{u,v}$ from \ref{eq:duv} with the above expression of the matrix $(1-yE_{1,j})A$, we obtain the following equations (with the convention that $x_{k,l}=0$ for $k \leq l$ and $z_{r,s}=0$ for $r>s$):

\begin{enumerate}
\item for $u \neq 1$ and $u>v$, $b_{u,v}=\sum_{r=1}^vx_{u,r}z_{r,v}=a_{u,v},$
\item for $u \neq 1$ and $u=v$, $b_{u,v}=z_{u,u}+\sum_{r=1}^{u-1}x_{u,r}z_{r,v}=a_{u,u}=1$,
\item for $u \neq 1$ and $u<v$, $b_{u,v}=z_{u,v}+\sum_{r=1}^{u-1}x_{u,r}z_{r,v}=a_{u,v}=0$,
\item for $u=v=1$, $b_{1,1}=z_{1,1}=1-ya_{j,1}$,
\item for $u=1$ and $u <v$, $b_{1,v}=z_{1,v}=-ya_{j,v}$.
\end{enumerate}
Note that in (5), for $v>j$, $b_{1,v}=-ya_{j,v}=0$ (as $A$ is lower unipotent).
Setting $v=1$ in formula $(1)$, for $u\in [2,n]$,  we obtain 
\begin{equation}\label{eq:solvex}
x_{u,1}=\frac{a_{u,1}}{z_{1,1}}=\frac{a_{u,1}}{1-ya_{j,1}}  \text{ } (\text{as } z_{1,1}=1-ya_{j,1} \text{ from } (4)).
\end{equation}
Now, let $C=(c_{k,l})_{k,l}=(1-yE_{1,j})A$ and $\mathcal{B}=(b_{u,v})_{u,v}$ as above. We proceed by equating, in the $1^{st}$ stage, the first row of the matrix $\mathcal{B}$ with the first row of the matrix $C$, starting from the leftmost entry (i.e. given by equations $(4)$ and $(5)$ above) and solve for $z_{\star,\star}$. Then in the next stage (say stage $1+\frac{1}{2}$) we equate the first column of the  matrix $\mathcal{B}$ with the first column of the matrix $C$ starting from the uppermost entry ($b_{2,1}=c_{2,1}$)  and solve for $x_{\star,\star}$ (i.e. those given by \ref{eq:solvex}). In the second stage we do the same with the second row and in the  stage $2+\frac{1}{2}$ we equate the second column of the matrix $\mathcal{B}$ with $C$ (given by (1), (2) and (3)) and proceed like this until the last $n^{th}$ stage. Our objective is to solve $x_{\star,\star}$ and $z_{\star,\star}$ while equating the matrix $\mathcal{B}$ with $C$ and show $(1),(2)$ and $(3)$ of lemma \ref{lem:biglemma}. We prove this by induction. 

Assume, by induction hypothesis, at the $m^{th}$ and the  stage $m+\frac{1}{2}$ ($1 \leq m < n$), that we have found $x_{k,l}$ for $k\in [2,n],l \in [1,m],k>l$ and $z_{r,s}$ for $r \in [1,m],s \in [1,n],r \leq s$ having the form  $(1), (2)$ and $(3)$ of lemma \ref{lem:biglemma}. Then, at the $(m+1)^{th}$ stage  we have to equate $b_{m+1,v}=c_{m+1,v}$ for $v \in [m+1,n]$. Equating $b_{m+1,m+1}=c_{m+1,m+1}$, we deduce, by formula (2) after \ref{eq:duv}, that
\begin{align*}
z_{m+1,m+1}&=1-\sum_{r=1}^mx_{m+1,r}z_{r,m+1},\\
&=1-\frac{yh_1(y,a)}{1-yg_1(y,a)} \text{ } (\text{by induction hypothesis}),\\
&=\frac{1-y(h_1+g_1)}{1-yg_1},
\end{align*}
 for some polynomial functions $h_1(y,a)$ and $g_1(y,a)$ with integral coefficients in $y$ and $a_{2,1},a_{3,1},a_{3,2},...,a_{n,n-1}$.
 
 Similarly, equating $b_{m+1,v}=c_{m+1,v}$ for $v \in [m+2,n]$, we obtain, by formula (3) after \ref{eq:duv}, that
 \begin{align*}
 z_{m+1,v}&=-\sum_{r=1}^mx_{m+1,r}z_{r,v},\\
 &=\frac{-yh_2(y,a)}{1-yg_2(y,a)} \text{ } (\text{again by induction hypothesis}),
 \end{align*}
At the  stage $(m+1)+\frac{1}{2}$ we have to equate $b_{u,m+1}=c_{u,m+1}$ for all $u \in [m+2,n]$. So, by formula (1) after \ref{eq:duv}, we get
\begin{align*}
x_{u,m+1}z_{m+1,m+1}&=a_{u,m+1}-\sum_{r=1}^mx_{u,r}z_{r,m+1},\\
&=a_{u,m+1}-\frac{yh_3(y,a)}{1-yg_3(y,a)} \text{ } (\text{again by induction hypothesis}),\\
&=\frac{h_4(y,a)}{1-yg_3(y,a)},
\end{align*}
for some $h_4$ and $g_3$ with integral coefficients. Therefore, 
\begin{equation*}
x_{u,m+1}=\frac{h_4(y,a)}{1-yg_3(y,a)}\cdot \frac{1}{z_{m+1,m+1}}=\frac{h_4(y,a)}{1-yg_3(y,a)}\frac{1-yg_1}{1-y(h_1+g_1)}=\frac{h_5(y,a)}{1-yg_5(y,a)},
\end{equation*}
with polynomials $h_5$ and $g_5$ having integral coefficients.
This completes our induction argument and finishes the proof of lemma \ref{lem:biglemma}.

\end{proof}

Now, let $f \in I_{\loc}$. Then, by lemma \ref{lem:biglemma}, the action of 
$(1+yE_{i,j})$ on $f$ is given by 
\begin{equation}\label{eq:analyticactionthird}
(1+yE_{i,j})f(A)=f(X)\chi(z_{1,1}^{-1},...,z_{n,n}^{-1}) \text { } (\text{ with } X \text{ and } z_{\star,\star} \text{ as in  lemma } \ref{lem:biglemma}).
\end{equation}
Recall that for $|v|<1$, we have 
\begin{equation*}
(1-v)^{-m}=\sum_{q=0}^{\infty}\binom{m+q-1}{q}v^q.
\end{equation*}
Assume now that $f \in \mathcal{A}(\mathbb{Z}^{\frac{n(n-1)}{2}},K)$ is a globally analytic function. Thus, $f$ is an element in the Tate algebra of $U$ with $\frac{n(n-1)}{2}$ variables. 
In order to show that the action of $(1+yE_{i,j})$ on $f \in \mathcal{A}(U,K)$, given by equation \ref{eq:analyticactionthird}, is globally analytic we have to show that \[\prod_{r=1}^n\chi_r(z_{r,r}^{-1})f(X)\] is a globally analytic function in $y$.
\begin{lemma}\label{lem:ifftofx}
If the action $f \rightarrow g$, $g(A)=f(X)$ where $A=XZ$ is globally analytic, then $f \rightarrow \prod_{r=1}^n\chi_r(z_{r,r}^{-1})g$ is globally analytic.  
\end{lemma}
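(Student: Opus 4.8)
The plan is to reduce the statement to two things: that each character factor $\chi_r(z_{r,r}^{-1})$ is, by itself, a globally analytic function of $y$, and that the space of globally analytic functions in $y$ with coefficients in $\mathcal{A}(U,K)$ is a Banach algebra, so that it is closed under products. Concretely, write $g=\sum_{N\ge0}y^Ng_N$ with $g_N\in\mathcal{A}(U,K)$ and $|g_N|\to0$ (this is the hypothesis), and set $\Psi:=\prod_{r=1}^n\chi_r(z_{r,r}^{-1})$. If we know $\Psi=\sum_{N}y^N\psi_N$ with $\psi_N\in\mathcal{A}(U,K)$ and $|\psi_N|\to0$, then the Cauchy product $\Psi g=\sum_{L}y^L\big(\sum_{N+M=L}\psi_Ng_M\big)$ has $L$-th coefficient of sup-norm $\le\max_{N+M=L}|\psi_N|\,|g_M|\to0$, using that the Gauss (sup) norm on $\mathcal{A}(U,K)$ is submultiplicative; hence $\Psi g$ is globally analytic, which is exactly the claim. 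Since a finite product of globally analytic functions is globally analytic, it suffices to treat a single factor $\chi_r(z_{r,r}^{-1})$.

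Next I would record the shape of $z_{r,r}^{-1}$ coming out of Lemma~\ref{lem:biglemma}. By part (1) of that lemma, $z_{r,r}=\frac{1-yh_{r,r}(y,a)}{1-yg_{r,r}(y,a)}$ with $h_{r,r},g_{r,r}$ polynomials with integer coefficients in $y$ and the entries $a_{k,l}$, so $z_{r,r}^{-1}=\frac{1-yg_{r,r}(y,a)}{1-yh_{r,r}(y,a)}$. Since $y$ ranges over $p\Z_p$ and the $a_{k,l}$ over $\Z_p$, the function $yh_{r,r}(y,a)$ has sup-norm $\le|p|<1$, so the geometric series $\frac1{1-yh_{r,r}(y,a)}=\sum_{q\ge0}(yh_{r,r}(y,a))^q$ converges in $\mathcal{A}(p\Z_p\times U,K)$ to a globally analytic function of sup-norm $\le1$. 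Multiplying by $1-yg_{r,r}(y,a)$ and subtracting $1$ one gets $z_{r,r}^{-1}-1=\frac{y(h_{r,r}-g_{r,r})}{1-yh_{r,r}}$; since $y/p\in\Z_p$, this is $p\,w_r$ with $w_r=w_r(y,a)$ globally analytic of sup-norm $\le1$. In particular $z_{r,r}^{-1}$ is a globally analytic function of $y$ with values in $1+p\Z_p$.

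It remains to show $\chi_r(1+pw_r)$ is globally analytic in $y$, and this is the main point. Because $\chi$ is analytic (condition~\ref{eq:unramifiedK}), $\chi_r(1+pw_r)=\exp\big(c_r\log(1+pw_r)\big)$, and I would verify convergence of both series in the Banach algebra $\mathcal{A}(p\Z_p\times U,K)$. The element $pw_r$ is topologically nilpotent with $|pw_r|\le|p|$, hence $\log(1+pw_r)=\sum_{k\ge1}\frac{(-1)^{k-1}}{k}(pw_r)^k$ converges, with terms of sup-norm $|p|^kp^{v_p(k)}=p^{-(k-v_p(k))}\to0$, and its sum $\ell_r$ satisfies $|\ell_r|\le|p|$ (the $k=1$ term dominates). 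Then, using $v_p(m!)\le m/(p-1)$ together with $v_p(c_r)>\frac{e}{p-1}-1$ from~\ref{eq:unramifiedK}, one gets $v_p(c_r\ell_r)\ge v_p(c_r)+1>\frac{e}{p-1}\ge\frac1{p-1}$, so the terms of $\exp(c_r\ell_r)=\sum_m\frac{(c_r\ell_r)^m}{m!}$ have sup-norm $\le\big(|c_r\ell_r|\,p^{1/(p-1)}\big)^m\to0$; thus the exponential converges in $\mathcal{A}(p\Z_p\times U,K)$ and $\chi_r(z_{r,r}^{-1})$ is globally analytic. (Equivalently, condition~\ref{eq:unramifiedK} says precisely that $\chi_r$ extends to a rigid-analytic function on the ball $1+pB^1$, and $y\mapsto z_{r,r}^{-1}$ is a rigid-analytic map into that ball, so the composite is rigid-analytic.) The only genuine work, and the step I expect to require the most care, is this bookkeeping of $p$-adic valuations in $\exp\circ(c_r\log)$ ensuring convergence in the Banach algebra and not merely pointwise; everything else is formal manipulation with the Banach-algebra structure and Lemma~\ref{lem:biglemma}.
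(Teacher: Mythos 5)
Your proposal is correct and follows essentially the same route as the paper: both reduce to the fact that $z_{r,r}^{-1}=\frac{1-yg_{r,r}}{1-yh_{r,r}}$ is globally analytic in $y$ (via the geometric series $\sum_q(yh_{r,r})^q$, convergent since $y\in p\Z_p$ and $h_{r,r}$ has integral coefficients) and that composing with the analytic character $\chi_r$ preserves analyticity. The only difference is that you spell out the $p$-adic valuation estimates for $\exp\circ(c_r\log)$ and the Banach-algebra argument for the finite product over $r$ explicitly, whereas the paper disposes of these by citing the earlier-established analyticity of $\chi$ and writing $\chi_r$ as a power series $\sum_n c_n(\cdot)^n$ with $|c_n|\to 0$.
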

\begin{proof}
With \ref{eq:unramifiedK}, our character $\chi$ is analytic. Hence, 
\begin{align*}
\chi_r(z_{r,r}^{-1})&=\chi_r\Big(\frac{1-yg_{{r,r}}(y,a)}{1-yh_{{r,r}}(y,a)}\Big) \qquad \qquad (\text{ from lemma } \ref{lem:biglemma})\\
&=\sum_{n=0}^{\infty}c_n\Big(\frac{1-yg_{{r,r}}(y,a)}{1-yh_{{r,r}}(y,a)}\Big)^n  \qquad (\text{ for } |c_n| \rightarrow 0).
\end{align*}
We are reduced to show that $(y,a) \rightarrow \frac{1-yg_{{r,r}}(y,a)}{1-yh_{{r,r}}(y,a)}$ is analytic in $y$ and this is true because \[\frac{1-yg_{{r,r}}(y,a)}{1-yh_{{r,r}}(y,a)}=(1-yg_{{r,r}}(y,a))\Big(\sum_{n=0}^{\infty}(yh_{{r,r}}(y,a))^n\Big).\]
Hence we have shown the lemma.
\end{proof}
Therefore, with lemma \ref{lem:ifftofx}, to prove that the action of $(1+yE_{i,j})$ on $f \in \mathcal{A}(U,K)$, given by equation \ref{eq:analyticactionthird}, is globally analytic, we only need to show that the action $f \rightarrow g$, $g(A)=f(X)$ where $A=XZ$ is globally analytic.
\begin{lemma}
The action $f \rightarrow g$, $g(A)=f(X)$ where $A=XZ$ is globally analytic.
\end{lemma}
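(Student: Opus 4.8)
The plan is to read the lemma off the explicit shape of the decomposition $(1-yE_{i,j})A=XZ$ supplied by Lemma \ref{lem:biglemma}: there each entry of $X$ is $x_{k,l}=h_{k,l}(y,a)/(1-y\,g_{k,l}(y,a))$ with $h_{k,l},g_{k,l}$ polynomials with coefficients in $\mathbb{Z}$ in $y$ and in the coordinates $a=(a_{2,1},\dots,a_{n,n-1})$ of $U$. The decisive structural input, inherited from Lazard's description of the pro-$p$ Iwahori subgroup, is that in this upper-triangular case $y$ runs over $p\mathbb{Z}_p$ rather than $\mathbb{Z}_p$. Writing $y=pw$ with $w$ a coordinate on the rigid-analytic closed unit ball, the element $y\,g_{k,l}(y,a)=pw\,g_{k,l}(pw,a)$ has sup-norm $\le|p|<1$ in the Tate algebra $\mathcal{O}_K\langle w,a\rangle$; hence $1-y\,g_{k,l}(y,a)$ is a unit of $\mathcal{O}_K\langle w,a\rangle$ with inverse $\sum_{m\ge 0}\bigl(pw\,g_{k,l}(pw,a)\bigr)^m$, so that $x_{k,l}\in\mathcal{O}_K\langle w,a\rangle$ with $\|x_{k,l}\|\le 1$. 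Consequently $(w,a)\mapsto X(pw,a)$ is an analytic map into $U$, the matrix $X$ being lower unipotent with every entry in the unit ball.

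Granting this, the argument is immediate. Given $f=\sum_{\nu\in\mathbb{N}^d}c_\nu a^\nu\in\mathcal{A}(U,K)$ with $|c_\nu|\to 0$, substitute $a_{k,l}\mapsto x_{k,l}$: since $\bigl\|\prod_{k>l}x_{k,l}^{\nu_{k,l}}\bigr\|\le 1$ for every $\nu$, the series $g(A)=f(X)=\sum_{\nu}c_\nu\prod_{k>l}x_{k,l}^{\nu_{k,l}}$ converges in the Banach algebra $K\langle w,a\rangle$. Membership of $f(X)$ in $K\langle w,a\rangle$ means precisely that $f(X)=\sum_{m\ge 0}w^m g_m$ with $g_m\in\mathcal{A}(U,K)$ and $\|g_m\|\to 0$, which is exactly the assertion that the map $w\mapsto(A\mapsto f(X))$ from the rigid one-parameter subgroup to the $K$-Banach space $\mathcal{A}(U,K)$ is globally analytic; this is what the lemma claims.

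The point I expect to matter is the invertibility of $1-y\,g_{k,l}(y,a)$ inside the Tate algebra: this is where the normalization $y\in p\mathbb{Z}_p$ — ultimately a consequence of $p>n+1$, which makes $G$ the ordered product of exactly these one-parameter groups — is used essentially. Were $y$ only required to lie in $\mathbb{Z}_p$, the denominators could vanish somewhere on $\mathbb{Z}_p^{d+1}$ and $f(X)$ would be merely meromorphic. If one prefers to mirror the computational style of Lemmas \ref{lem:lemmadiagonal1}--\ref{lem:lemmalower1}, the same conclusion follows by hand: expand each $(1-y\,g_{k,l})^{-\nu_{k,l}}$ via $(1-v)^{-m}=\sum_q\binom{m+q-1}{q}v^q$, multiply out, and collect powers of $y$ to get $f(X)=\sum_{m\ge 0}y^m F_m$ with $F_m=\sum_\nu c_\nu P_{m,\nu}(a)$ and $P_{m,\nu}\in\mathbb{Z}_p[a]$; one then checks $\|F_m\|\le\|f\|$ and $F_m\in\mathcal{A}(U,K)$ by bounding $\deg_a P_{m,\nu}$ linearly in $|\nu|$ and $m$, and concludes since $|y|^m\|F_m\|\to 0$ for $y\in p\mathbb{Z}_p$. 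Finally, combined with Lemma \ref{lem:ifftofx} this yields global analyticity of the full action of $(1+yE_{i,j})$ — the last case needed, via Lemma \ref{lem:sufficeaction} together with Lemmas \ref{lem:lemmadiagonal1} and \ref{lem:lemmalower1}, for the global-analyticity part of Theorem \ref{thmmainglo}.
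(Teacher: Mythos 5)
Your argument is correct and follows the same essential route as the paper: expand $(1-y\,g_{k,l})^{-1}$ as a geometric series, substitute into the power series for $f$, and verify convergence of the result as a function of the one-parameter variable. Your first formulation — that for $y=pw$ each denominator $1-y\,g_{k,l}(y,a)$ is a unit of $\mathcal{O}_K\langle w,a\rangle$, so that $X$ is an analytic map into $U$ with all entries in the unit ball and substitution of a Tate-algebra element into $f$ stays in $K\langle w,a\rangle$ — is a cleaner packaging of that same computation, and it has the merit of flagging exactly where $y\in p\mathbb{Z}_p$ matters: it is the factor $|p|^{m}$ coming from $y=pw$ that makes the coefficients in the coordinate $w$ decay, a point the paper's written-out expansion uses but does not isolate. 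Your second, by-hand sketch (expand, collect powers of $y$, bound each coefficient by $\|f\|$, then use $|y|\le|p|$) is essentially the paper's own proof.
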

\begin{proof}
Recall that the lower unipotent matrix $X$ is $((x_{k,l})_{k,l})$ with $x_{k,l}=\frac{h_{k,l}(y,a)}{1-yg_{k,l}(y,a)}$ given by lemma \ref{lem:biglemma}. Write 
\begin{align*}
x_{k,l}&=h_{k,l}(y,a) \sum_{n=0}^{\infty}y^ng_{k,l}(y,a)^n\\
&=\sum_{n=0}^{\infty}y^ng_{n,k,l}(y,a).
\end{align*} 
Since $f$ is analytic, $f=\sum_Nf_Nx^N$ with $N=(N_{k,l})\in \mathbb{Z}_p^{\frac{n(n-1)}{2}},x^N=\prod_{k>l}x_{k,l}^{N_{k,l}}$. The norm $|f_N| \rightarrow 0$ as $N \rightarrow \infty$. Then,
\[f(X)=f((x_{k,l})_{k,l})=\sum_Nf_N\prod_{\underset{k>l}{k,l}} (\sum_{n=0}^{\infty}y^ng_{n,k,l}(y,a))^{N_{k,l}}.\]
As \[(\sum_{n=0}^{\infty}y^ng_n)^M=\sum_{v \geq 0}{y^{v}}\sum_{v_1+\cdots+v_M=v}g_{v_1}\cdots g_{v_M},\] we obtain that
\[f(X)=\sum_{N=(N_{k,l})}f_N\prod_{k,l}\sum_{v \geq 0}y^v(\sum_{v_1+\cdots + v_{N_{k,l}}=v}g_{v_1,k,l}\cdots g_{v_{N_{k,l}},k,l}).\]
Define $a_{k,l}(v)=(\sum\limits_{v_1+\cdots + v_{N_{k,l}}=v}g_{v_1,k,l}\cdots g_{v_{N_{k,l}},k,l})$ then,
\begin{align*}
f(X)&=\sum_{N=(N_{k,l})}f_N\prod_{k,l}\sum_{v \geq 0}y^va_{k,l}(v)\\
&=\sum_{N=(N_{k,l})}f_N\sum_{v \geq 0}^{\infty}y^v\sum_{\sum {v_{k,l=v}}}\prod_{k,l}a_{k,l}(v_{k,l})\\
&=\sum_{N=(N_{k,l})}f_N\sum_{v \geq 0}^{\infty}y^v\sum_{\sum {v_{k,l=v}}}\prod_{k,l}\sum\limits_{v_1+\cdots + v_{N_{k,l}}=v_{k,l}}g_{v_1,k,l}\cdots g_{v_{N_{k,l}},k,l}.
\end{align*}
The coefficient of $y^v$ is \[\sum_{N=(N_{k,l})}f_N\sum_{\sum {v_{k,l=v}}}\prod_{k,l}\sum\limits_{v_1+\cdots + v_{N_{k,l}}=v_{k,l}}g_{v_1,k,l}\cdots g_{v_{N_{k,l}},k,l}:=\sum_{N=(N_{k,l})}f_Ns_N\]
where $$s_N:=\sum_{\sum {v_{k,l=v}}}\prod_{k,l}\sum\limits_{v_1+\cdots + v_{N_{k,l}}=v_{k,l}}g_{v_1,k,l}\cdots g_{v_{N_{k,l}},k,l}.$$
Here $N_{k,l}$ is finite and $v_{k,l} \leq v$ and hence the sum $s_N$ is a finite sum in $\mathbb{Z}_p$. As, with $N \rightarrow \infty$, $|f_N|\rightarrow 0$, we obtain that $|\sum_Nf_Ns_N| \rightarrow 0$ and this completes the proof. 
\end{proof}
This shows the analyticity of the action given by \ref{eq:analyticactionthird}. So we have shown
\begin{lemma}\label{lem:lemmaupper2}
For $y\in p\mathbb{Z}_p$ and $i<j$, the action of the upper unipotent (rigid-analytic) $1$-parameter subgroup $(1+yE_{i,j})$  of $G$ on $f \in \mathcal{A}(\mathbb{Z}_p^{\frac{n(n-1)}{2}},K)$, given by \ref{eq:analyticactionthird} is an analytic action.
\end{lemma}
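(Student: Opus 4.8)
The plan is to take the explicit formula \ref{eq:analyticactionthird} for the action of $(1+yE_{i,j})$ as the starting point and reduce the analyticity claim to the two technical lemmas already established. Recall that for $i<j$ and $y\in p\Z_p$ the operator $(1+yE_{i,j})$ sends $f\in I_{\loc}$ to the function $A\mapsto f\bigl((1-yE_{i,j})A\bigr)$; the new difficulty, as against the lower-unipotent and diagonal cases of Lemmas \ref{lem:lemmalower1} and \ref{lem:lemmadiagonal1}, is that $(1-yE_{i,j})A$ no longer lies in $U$. So first I would invoke the Iwahori decomposition $G=UQ_0$ in the sharp form of Lemma \ref{lem:biglemma}: write $(1-yE_{i,j})A=XZ$ with $X=(x_{k,l})\in U$, $Z=(z_{r,s})\in Q_0$, and $x_{k,l},z_{r,r},z_{r,s}$ of the controlled shape $h_{\star,\star}(y,a)/(1-yg_{\star,\star}(y,a))$ with $h_{\star,\star},g_{\star,\star}\in\Z[y,a_{2,1},\dots,a_{n,n-1}]$. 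Using the $Q_0$-equivariance \ref{eq:chiaction} turns the action into $(1+yE_{i,j})f(A)=f(X)\,\chi(z_{1,1}^{-1},\dots,z_{n,n}^{-1})=f(X)\prod_{r=1}^{n}\chi_r(z_{r,r}^{-1})$, which is precisely \ref{eq:analyticactionthird}.

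Next I would split the analyticity of $y\mapsto f(X)\prod_r\chi_r(z_{r,r}^{-1})$ into a "character part" and a "substitution part". By Lemma \ref{lem:ifftofx} it is enough to show that the substitution map $f\mapsto g$, $g(A)=f(X)$, is globally analytic, since the character factor is then automatically analytic: $\chi$ being analytic by \ref{eq:unramifiedK}, each $\chi_r$ is a power series with coefficients tending to $0$ evaluated at the rational function $z_{r,r}^{-1}=(1-yg_{r,r})/(1-yh_{r,r})$, and $(1-yh_{r,r})^{-1}=\sum_{m\ge0}(yh_{r,r})^m$ converges in the ambient Tate algebra once $y\in p\Z_p$. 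For the substitution part, expand each $x_{k,l}=h_{k,l}\sum_{m\ge0}y^m g_{k,l}^m=\sum_{m\ge0}y^m g_{m,k,l}(y,a)$ with $g_{m,k,l}\in\Z[y,a]$, substitute into $f=\sum_N f_N x^N$ (with $|f_N|\to0$), multiply out the finitely many geometric series attached to the exponents $N_{k,l}$, and read off the coefficient of $y^v$: it has the form $\sum_N f_N s_N$ with each $s_N$ a finite $\Z_p$-linear combination of monomials in the $a_{i,j}$, so its sup-norm is $\le\sup_N|f_N|\to0$. Hence $y\mapsto f(X)$ is globally analytic, and via Lemma \ref{lem:ifftofx} so is the full action \ref{eq:analyticactionthird}.

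The genuinely delicate ingredient is Lemma \ref{lem:biglemma}, i.e. producing the $UQ_0$-decomposition of $(1-yE_{i,j})A$ with denominators of exactly the form $1-y\cdot(\text{integral polynomial})$. The clean step is the reduction to $i=1$: for $i>1$ the matrix $(1-yE_{i,j})$ is block-diagonal $1\oplus(1-yE')$, the product $(1-yE_{i,j})A$ is block-triangular, and peeling off the first row and column (which cost no denominators) lets one apply the inductive hypothesis on $GL(n-1)$. For $i=1$ one must run the elimination in a fixed order — stage $m$ solving the $m$-th row of $Z$ from \ref{eq:duv}, stage $m+\tfrac12$ solving the $m$-th column of $X$ — and verify at each step that the only new denominator is $z_{m+1,m+1}=(1-y(h_1+g_1))/(1-yg_1)$, so that inverting it and multiplying it into the next column keeps every entry in the allowed class. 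Once this bookkeeping is in place, the analyticity in $y$ for $y\in p\Z_p$ is soft: each $(1-yg)^{-1}$ is an honest Tate-algebra unit because $|yg|\le|y|\le p^{-1}<1$ (equivalently, after reparametrising $y=p\eta$ the geometric series has coefficients divisible by $p^m$), and Tate algebras are closed under substitution and product, with the $c_\nu\to0$ condition propagating to the $y$-coefficients. The hypothesis $p>n+1$ enters only through $y\in p\Z_p$ for the upper-unipotent generators here, and globally through Lemma \ref{lem:sufficeaction} which guarantees that these one-parameter subgroups generate $G$ as an ordered product.
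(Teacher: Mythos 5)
Your proposal follows exactly the paper's own chain of reasoning: the $(1-yE_{i,j})A=XZ$ decomposition of Lemma \ref{lem:biglemma}, the reduction to the substitution map $f\mapsto f(X)$ via Lemma \ref{lem:ifftofx}, and the geometric-series expansion of the $x_{k,l}$ followed by collecting $y$-coefficients, with the same closing estimate $|\sum_N f_N s_N|\to 0$. No gap and no genuine deviation; if anything, your parenthetical remark about reparametrising $y=p\eta$ makes the role of $y\in p\Z_p$ in the final convergence step a little more visible than the paper's phrasing.
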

Note that, by section \ref{sub:sectionbigger}, the vector space of locally analytic functions of principal series $$\ind_{P_0}^B(\chi)_{\loc}=\{f \in \mathcal{A}_{\loc}(B,K):f(gb)=\chi(b^{-1})f(g),b \in P_0,g \in B\}$$ is isomorphic to the vector space of the locally analytic functions \\
$I_{\loc}\cong \mathcal{A}_{\loc}(\mathbb{Z}_p^{\frac{n(n-1)}{2}},K)$. Denote by $\ind_{P_0}^B(\chi)$ the space of globally analytic vectors of $\ind_{P_0}^B(\chi)_{\loc}$ which is $\mathcal{A}:=\mathcal{A}(\mathbb{Z}_p^{\frac{n(n-1)}{2}},K)$. 

Also,  the representation on $\mathcal{A}$ is admissible: indeed, $\mathcal{A}$ is a subspace of $\mathcal{A}(G)$ defined by the conditions $f(gb) = \chi(b^{-1})f(g)$ ($f$ is then analytic on $G$ since $\chi$ is so) and this is a closed subspace. 
Thus by lemmas (\ref{lem:sufficeaction}- \ref{lem:lemmalower1}) and lemma \ref{lem:lemmaupper2} we have shown the following theorem.
\begin{theorem}\label{thmmainglo}
Assume $p>n+1$. Let $\chi$ be an analytic character of $T_0$ (cf. \ref{eq:unramifiedK}). The action of $G$ on the induced principal series $\ind_{P_0}^B(\chi)$ is a globally analytic action. Moreover, the globally analytic representation of $G$ on $\ind_{P_0}^B(\chi)$ is admissible in the sense of Emerton (\cite{Emertonbook}, \cite[sec. $2.3$]{Clozel1}).
\end{theorem}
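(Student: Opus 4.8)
The plan is simply to assemble the pieces already in place. First I would recall the consequence of the standing hypothesis \ref{eq:conditiononp}: since $p>n+1$, the group $G$ is $p$-saturated in the sense of Lazard \cite{Lazard}, so that, with the ordered Lazard basis described right after \ref{eq:conditiononp}, $G$ is realized as a rigid-analytic group equal to the ordered product of the one-parameter lower unipotent subgroups $(1+yE_{i,j})$ ($i>j$, $y\in\Z_p$), the one-parameter diagonal subgroups $(t_kE_{k,k}+\sum_{i\neq k}E_{i,i})$ ($t_k\in 1+p\Z_p$), and the one-parameter upper unipotent subgroups $(1+yE_{i,j})$ ($i<j$, $y\in p\Z_p$); the coordinates $(x_1,\dots,x_l)$ on $G$, with $l=\dim G$, are the parameters of these one-parameter groups taken in that order, and group multiplication is rigid-analytic in these coordinates.

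Next, by Lemma \ref{lem:sufficeaction}, to prove that for each $f\in\mathcal{A}=\mathcal{A}(U,K)=\ind_{P_0}^B(\chi)$ the orbit map $h\mapsto h\cdot f$ is a globally analytic function $G\to\mathcal{A}$, it suffices to prove the analogous statement for each of the one-parameter rigid-analytic subgroups and for the diagonal one-parameter subgroups of which $G$ is the ordered product. These three families are handled, respectively, by Lemma \ref{lem:lemmadiagonal1} (the diagonal one-parameter subgroups, where analyticity of $\chi$, condition \ref{eq:unramifiedK}, is used), by Lemma \ref{lem:lemmalower1} (the lower unipotent one-parameter subgroups), and by Lemma \ref{lem:lemmaupper2} (the upper unipotent one-parameter subgroups, which rests on the matrix factorization of Lemma \ref{lem:biglemma} and on Lemma \ref{lem:ifftofx}). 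In each case the explicit power-series computation exhibits the coefficient of each monomial in the one-parameter variable as an element of $\mathcal{A}$ whose sup-norm tends to $0$ — which is precisely the definition of global analyticity of the orbit map. Combining these three lemmas with Lemma \ref{lem:sufficeaction} shows that the $G$-action on $\ind_{P_0}^B(\chi)$ is globally analytic, proving the first assertion.

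For the admissibility assertion I would argue as follows. The Tate algebra $\mathcal{A}(G)$ of globally analytic functions on $G$ (in the $l$ coordinates above), with the left regular action $h\cdot F(g)=F(h^{-1}g)$, is an admissible globally analytic representation of $G$ in the sense of Emerton \cite{Emertonbook} (see also \cite[sec.~2.3]{Clozel1}), its continuous dual being the Noetherian algebra of globally analytic distributions on $G$. Using the decompositions $B=UP_0$ and $G=UQ_0$ of \ref{eq:iwaho}, the space $\ind_{P_0}^B(\chi)\cong\mathcal{A}$ is identified with the subspace of those $F\in\mathcal{A}(G)$ satisfying $F(gb)=\chi(b^{-1})F(g)$ for all $b\in Q_0$; since $\chi$ is analytic (so that $b\mapsto\chi(b^{-1})$ extends analytically to $Q_0$), these are closed linear conditions, whence $\mathcal{A}$ is a closed $G$-stable subspace of $\mathcal{A}(G)$. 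As a closed subrepresentation of an admissible globally analytic representation is again admissible, $\ind_{P_0}^B(\chi)$ is admissible, which completes the proof.

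The substance is carried entirely by the preceding lemmas, so there is no genuinely hard step left; the only point that needs a little care — and the main potential \textbf{obstacle} — is the identification of $\mathcal{A}$ with a \emph{closed} subspace of the admissible regular representation $\mathcal{A}(G)$ together with the stability of admissibility under passage to closed subrepresentations. For both of these one invokes the corresponding structural statements from \cite{Emertonbook} (and \cite[sec.~2.3]{Clozel1}); the closedness uses precisely that $\chi$ is analytic, so the defining equivariance relations are continuous (indeed analytic) identities rather than merely locally analytic ones.
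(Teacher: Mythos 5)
Your proposal is correct and follows the same route as the paper: it reduces global analyticity of the $G$-action to the one-parameter subgroups via Lemma \ref{lem:sufficeaction} and then invokes Lemmas \ref{lem:lemmadiagonal1}, \ref{lem:lemmalower1}, \ref{lem:lemmaupper2}, and it establishes admissibility by realizing $\mathcal{A}$ as the closed subspace of $\mathcal{A}(G)$ cut out by the equivariance condition $f(gb)=\chi(b^{-1})f(g)$, where analyticity of $\chi$ ensures the conditions are analytic and hence closed. This matches the paper's argument essentially verbatim.
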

Recall that $\chi=(\chi_1,..,\chi_n)$ where $\chi_i(1+pu_i)=e^{c_i\log(1+pu_i)}$ for $c_i \in K$, $u_i$ close to $0$, $i \in [1,n]$. Also, recall from \ref{eq:fanalytic} that $f \in \mathcal{A}$ implies that $f(A)=\sum_{\nu \in \mathbb{N}^d}c_{\nu}a^{\nu}$ with $|c_{\nu}|\rightarrow 0$ as $|\nu|=\nu_{2,1}+\nu_{3,1}+\cdots +\nu_{n,n-1} \rightarrow \infty$.

In the following, we will have conditions on the character $\chi$ such that the globally analytic representation of $G$ on $\mathcal{A}$ is irreducible.  

Let $\mu$ be the linear form from the Lie algebra of the torus $T_0$ to $K$ given by 
  $$\mu=(-c_1,...,-c_n):Diag(t_1,...,t_n)\mapsto \sum_{i=1}^n-c_it_i$$
 where $t=(t_i) \in \Lie(T_0)$. For negative root $\alpha =(i,j), i>j$, let $H_{(i,j)}$ be the matrix $E_{i,i}-E_{j,j}$ where $E_{i,i}$ is the standard elementary matrix.

\begin{theorem}\label{thm:holomorphic}
Let $c_i$'s satisfy  \ref{eq:unramifiedK} and $p>n+1$, then the globally analytic representation $\mathcal{A}\cong \ind_{P_0}^B(\chi)$ of $G$ is topologically irreducible if and only if for all $\alpha=(i,j) \in \Phi^-$,  $-\mu(H_{\alpha=(i,j)}) + i-j \notin \{1,2,3,...\}$.
\end{theorem}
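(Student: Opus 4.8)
The plan is to analyze irreducibility through the infinitesimal action of the Lie algebra $\mathfrak{g} = \mathfrak{gl}_n$ on $\mathcal{A} \cong \ind_{P_0}^B(\chi)$, and then to translate the question into one about a Verma module. First I would differentiate the $G$-action of Theorem \ref{thmmainglo} to obtain an action of $\mathfrak{g}$ (or at least of the relevant root subalgebras) on $\mathcal{A}$; using the explicit one-parameter formulas \ref{eq:analyticexpressionone1}, \ref{eq:lowertriangularanalytic}, \ref{eq:analyticactionthird}, the generators $E_{i,j}$ for $i>j$ act by first-order differential operators in the $a_{k,l}$ (essentially $\partial/\partial a_{i,j} + \text{lower order}$), the torus $H_{i,i}$ acts by an Euler-type operator twisted by $c_i$, and the $E_{i,j}$ for $i<j$ act by the derivative of \ref{eq:analyticactionthird}, which involves both differentiation and multiplication operators coming from the $\chi_r(z_{r,r}^{-1})$ factors. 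The function $1 \in \mathcal{A}$ is (up to scalar) a highest-weight vector: it is killed by the lower-triangular (positive with respect to the Borel of $B$) generators and is a $\mu$-eigenvector for the torus.

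The key structural step: show that any nonzero closed $G$-invariant subspace $V \subseteq \mathcal{A}$ contains the constant function $1$. This is the step the introduction flags, using the Lie algebra action. I would argue that for $0 \neq f \in V$, repeatedly applying the ``lowering'' operators (the $E_{i,j}$, $i>j$, acting roughly as $\partial/\partial a_{i,j}$) strictly decreases the total degree $|\nu|$ of the leading terms, so after finitely many steps one lands on a nonzero constant; since $V$ is closed and $\mathfrak{g}$-stable (the infinitesimal action preserves any closed $G$-stable subspace, as the action is globally analytic), $1 \in V$. Then, because $1$ generates $\mathcal{A}$ as a $\mathfrak{g}$-module in an appropriate topological sense — applying the raising operators $E_{i,j}$, $i<j$, to $1$ produces, modulo the $\chi$-twist, monomials spanning a dense subspace — any closed $G$-invariant subspace containing $1$ is all of $\mathcal{A}$. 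Hence $\mathcal{A}$ is topologically irreducible precisely when every nonzero closed invariant subspace contains $1$, which by the degeneration argument always holds \emph{provided} the raising operators do not create a proper invariant subspace; the obstruction to this is exactly a submodule of the associated Verma module.

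Concretely, I would compare $\mathcal{A}$ (or its naturally dense algebraic submodule $\mathcal{A}^{\mathrm{alg}}$ spanned by the monomials $a^\nu$) with the Verma module $M(\lambda)$ of highest weight $\lambda$ corresponding to $\mu$ (with a shift by the half-sum of positive roots accounting for the $i-j$ terms), realized in its standard ``functions on the unipotent radical'' model. The map sending $1$ to the highest-weight vector intertwines the two $\mathfrak{g}$-actions, and one checks it is an isomorphism of $\mathfrak{g}$-modules onto $M(\lambda)$ (both are free of rank one over $U(\mathfrak{n}^-)$ with matching weights). Therefore proper closed $G$-invariant subspaces of $\mathcal{A}$ correspond to proper submodules of $M(\lambda)$; $M(\lambda)$ is irreducible if and only if $\lambda$ is antidominant in the Bernstein--Gelfand--Gelfand sense, i.e. $\langle \lambda + \rho, \alpha^\vee \rangle \notin \{1,2,3,\dots\}$ for every positive coroot $\alpha^\vee$. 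Unwinding the dictionary — $\lambda$ is given by $-\mu$ on the $H_\alpha$ and $\rho$ contributes the $i-j$ — yields exactly the stated condition $-\mu(H_{(i,j)}) + i - j \notin \{1,2,3,\dots\}$ for all $\alpha = (i,j) \in \Phi^-$.

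The main obstacle is the passage between the \emph{topological} setting (closed $G$-invariant subspaces of a $K$-Banach space) and the purely algebraic BGG criterion for Verma modules: one must verify that taking closures does not produce extra invariant subspaces beyond those coming from $U(\mathfrak{g})$-submodules, i.e. that the algebraic submodule structure of $M(\lambda)$ and the closed-invariant-subspace lattice of $\mathcal{A}$ coincide. I expect this to require an argument that the $\mathfrak{n}^-$-action is ``locally finite enough'' — each $f \in \mathcal{A}$ with bounded leading degree is moved within a finite-dimensional space by lowering operators — together with admissibility (Theorem \ref{thmmainglo}) to control closures, so that a nonzero closed invariant subspace has nonzero image in the algebraic module and the BGG dichotomy transfers. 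Secondary care is needed in bookkeeping the $\rho$-shift: the ``$+i-j$'' is precisely $\langle \rho, \alpha^\vee\rangle$ for $\alpha = (i,j)$, which I would pin down by computing the torus action on the monomials $a^\nu$ explicitly from \ref{eq:analyticexpressionone1}.
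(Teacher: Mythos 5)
Your overall strategy — differentiate the $G$-action to a $\mathfrak{g}$-action, show every nonzero closed $G$-invariant subspace contains the constant $1$, compare the polynomial ($\mathfrak{h}$-finite) part of $\mathcal{A}$ with a Verma module, and invoke the Bernstein--Gelfand (Dixmier 7.6.24) criterion with the $\rho$-shift producing the $i-j$ terms — is the same as the paper's, and your bookkeeping of the Verma module highest weight, the intertwiner through $1\mapsto$ highest-weight vector, and the role of the weight multiplicities all match.

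However, the step where you claim ``repeatedly applying the lowering operators strictly decreases the total degree of the leading terms, so after finitely many steps one lands on a nonzero constant'' has a genuine gap. This works for a polynomial, but a generic $f\in\mathcal{X}$ is an infinite power series $\sum_\nu c_\nu a^\nu$, and applying $Y_\alpha\approx\partial/\partial a_\alpha$ to it produces another infinite power series: you never terminate at a constant. What the lowering-operator argument actually yields (the paper's Lemma~\ref{0inM}) is only that the multi-index $0$ lies in the set $M$ of exponents supported by $\mathcal{X}$, i.e.\ there exists some $f=c_0 + \sum_{|r|>0}c_r a^r \in\mathcal{X}$ with $c_0\neq 0$ --- not that the constant function itself is in $\mathcal{X}$. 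Passing from this to $1\in\mathcal{X}$ is a separate and more delicate step (the paper's Lemma~\ref{1inM}): one applies the $(p-1)$-st power of the torus operator $T_k$, builds the averaging operators $E_k$ and $A_{k,s}$, and uses that $(1-m^{p-1})^\ell$ tends $p$-adically to $0$ or $1$ depending on $p\mid m$, iterating over $s\to\infty$ and over all $k$, to squeeze out exactly the constant term as a limit. Your proposal does not supply anything in place of this argument, and the ``closedness of $V$'' you invoke does not by itself rescue the finite-step claim.

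A secondary but real incompleteness is the converse direction. You correctly flag that one must rule out extra closed invariant subspaces beyond those coming from $U(\mathfrak{g})$-submodules of $V_{-\mu}$, and you gesture at local finiteness, but you do not give the argument. The paper handles this with Lemma~\ref{obv} (for $\mathfrak{g}$-stable $V\subset\mathcal{A}_{\fin}$ one has $V = \overline{V}\cap\mathcal{A}_{\fin}$, using that the weight-space projections $p_\xi$ are continuous onto finite-dimensional spaces) and its Corollary~\ref{impcorirr}. Without something like this, the ``if $\mathcal{A}$ irreducible then $V_{-\mu}$ irreducible'' direction of the theorem is not established, so your proof as written would only prove one implication.
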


Assume $\mathcal{X} \subset \mathcal{A}$ is a closed $G$-invariant subspace. Let $\Phi,\Phi^-,\Phi^+,\Pi$ be the roots, negative roots, positive roots and simple roots respectively associated to $G$.  Consider $f \in \mathcal{A}$. Then, from \ref{eq:fanalytic}, 
$$f=\sum_{\nu \in \mathbb{N}^d}c_{\nu}a^{\nu}$$ where $d=\frac{n(n-1)}{2}, c_{\nu} \in K$, $|c_{\nu}| \rightarrow 0$ as $|\nu|:=\sum_{\alpha \in \Phi^-} \nu_{\alpha} \rightarrow \infty$. \\
Here, $\nu=(\nu_{\alpha}, \alpha \in \Phi^-) \in \N^d$ and $a_{\nu}=\prod_{\alpha \in \Phi^-}a_{\alpha}^{\nu_{\alpha}}$. In some arguments we will have to order the exponents $\nu_{\alpha}$'s. We use the following lexicographic order. Let $\alpha=(i,j)$ and $\alpha^{\prime}=(k,l)$. Then $\nu_{\alpha}$ comes before $\nu_{\alpha^{\prime}}$ if and only if $i<k$ or $i=k$ and $j<l$, i.e. $\nu=(\nu_{2,1},\nu_{3,1},\nu_{3,2},...,\nu_{n,n-1})$ (see also the  discussion before equation \ref{eq:fanalytic}). For $N \geq 0$, let $\tau_N$ be the natural truncation
$$\mathcal{A} \rightarrow K[a]_N:=\oplus_{|\nu| \leq N}Ka^{\nu}.$$ The later space is the space of polynomials in several variables with total degree $\leq N$. As $\tau_N$ is equivariant under the action of the diagonal subgroup of $G$ given by formulas \ref{eq:analyticexpression1} and \ref{eq:diagonal1} and the associated characters of the diagonal torus of $G$ are linearly independent, $\tau_N(\mathcal{X})$ is a direct sum of monomials given by $$\tau_N(\mathcal{X}):=\mathcal{X}_N=\{\sum_{\nu \in M_N}c_{\nu}a^{\nu}\}$$ where $M_N$ is the set of exponents of $a$ of elements in $\mathcal{X}_N$. If $N \leq N^{\prime}$ and $\nu \in M_N$, then by surjectivity $$K[a]_{N^{\prime}} \twoheadrightarrow K[a]_N,$$ we obtain $\mu \in M_{N^{\prime}}$. Conversely, $\nu \in M_{\N^{\prime}}$ and $|\nu| \leq N$ implies $\nu \in M_N$. Therefore, the multi-sets $M_N$ and $M_{N^{\prime}}$ are compatible and thus there exists $M$ (the exponents of elements of $\mathcal{X}$) such that 
\begin{enumerate}
\item $f \in \mathcal{X} \implies c_{\nu}=0$ for all $\nu \notin M$.
\item If $\nu \in M$, $a^{\nu} \in \tau_N(\mathcal{X})$ for all $N \geq |\nu|$, thus there exists $$f:=a^{\nu}+\sum_{|r|>N}c_ra^r \in \mathcal{X}$$ (here $r=(r_{\alpha}, \alpha \in \Phi^-) \in \N^d, |c_r| \rightarrow 0$) \label{specialf}.
\end{enumerate}
For $\alpha \in \Phi^-$, let $Y_{\alpha} \in \mathfrak{g}=\text{Lie}(G)$ be the infinitesimal generator associated to the unipotent subgroup $1+yE_{\alpha}, y \in \Z_p$, $E_{\alpha}$ being the standard elementary matrix at $\alpha$. 

\begin{lemma}\label{0inM}
The multi-index $0 \in M.$ 
\end{lemma}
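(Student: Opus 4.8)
The plan is to argue by contradiction, using the first of the two properties characterising $M$ — namely that every $f\in\mathcal X$ has $c_\nu=0$ for all $\nu\notin M$ — together with the $G$-equivariance of the action \ref{eq:Gaction}. Throughout we take $\mathcal X\neq 0$, so that $M\neq\emptyset$; this is the only case of interest for the irreducibility argument (if $\mathcal X=0$ the statement is vacuous).

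First I would record the elementary dictionary between the two descriptions of elements of $\mathcal A$. For $\varphi=\sum_{\nu}c_\nu a^\nu\in\mathcal A\subset I_{\loc}$, every monomial $a^\nu$ with $\nu\neq 0$ vanishes at the point $a_\alpha=0$ $(\alpha\in\Phi^-)$, i.e.\ at the identity $e\in U\subset G$ (note $e=e\cdot e$ in the decomposition $G=UQ_0$, so $\varphi(e)$ is unambiguous). Hence $c_0=\varphi(e)$, and consequently $0\in M$ is equivalent to the assertion that some function in $\mathcal X$ does not vanish at $e$.

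Now suppose $0\notin M$. By the property of $M$ recalled above, every $f\in\mathcal X$ then has $c_0=0$, hence $f(e)=0$. Fix any $f\in\mathcal X$. For an arbitrary $g\in G$, $G$-invariance of $\mathcal X$ gives $g^{-1}\cdot f\in\mathcal X$, so $(g^{-1}\cdot f)(e)=0$; but \ref{eq:Gaction} yields $(g^{-1}\cdot f)(e)=f(g\,e)=f(g)$. Letting $g$ range over $G$ we obtain $f\equiv 0$ on $G$, i.e.\ $f=0$ in $I_{\loc}$. Since $f\in\mathcal X$ was arbitrary, $\mathcal X=0$, contradicting our hypothesis; therefore $0\in M$.

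I do not expect a genuine obstacle here; the step deserving attention is the passage between elements of $\mathcal A$ viewed as power series in the coordinates $a_\alpha$ and the same elements viewed as honest functions on $G$ that are $\chi$-equivariant under $Q_0$ — in particular that evaluation at $e$ returns the constant coefficient and intertwines with left translation. If one prefers an argument phrased through the infinitesimal generators $Y_\alpha$ just introduced, the same conclusion follows by taking $0\neq f\in\mathcal X$, choosing a monomial $a^\nu$ of least total degree occurring in $f$ and a root $\alpha\in\Phi^-$ with $\nu_\alpha>0$, and observing that $Y_\alpha$ acts on $\mathcal A$ as $-\partial_{a_\alpha}$ plus a correction that does not lower total degree further; since a closed $G$-invariant subspace of a globally analytic representation is stable under $\mathfrak g=\Lie(G)$ (Theorem \ref{thmmainglo}), iterating this $|\nu|$ times lands on a non-zero constant function in $\mathcal X$, again giving $0\in M$.
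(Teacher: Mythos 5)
Your first argument is correct and takes a genuinely different route from the paper. The paper picks some $\nu\in M$, uses property (\ref{specialf}) to obtain $f=a^\nu+\sum_{|r|>N}c_r a^r\in\mathcal X$, applies the infinitesimal generator $Y_\beta$ for a negative root $\beta=(i,j)$ with $\nu_{i,j}>0$, and identifies the unique contribution of total degree $|\nu|-1$ in $Y_\beta(f)$ (the $l=j$ term in the sum over $l$, where $a_{j,j}=1$, while the tail $\sum_{|r|>N}$ only contributes degrees $\geq|\nu|$) to conclude $(\nu_\alpha,\,\nu_{i,j}-1\,;\,\alpha\neq(i,j))\in M$; iterating descends to $0$. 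Your approach skips the Lie-algebra computation entirely: the constant coefficient of $f\in\mathcal A$ is $f(e)$, so if $0\notin M$ then every $f\in\mathcal X$ vanishes at $e$, and $G$-invariance together with $(g^{-1}\cdot f)(e)=f(g)$ forces $f\equiv 0$, hence $\mathcal X=0$. This is shorter, more elementary, and uses nothing beyond property (1) of $M$, the fact that the Tate algebra injects into functions on $U$, and the definition of the action — a clean win for this particular lemma. Your second sketch is essentially the paper's argument, but with two imprecisions worth flagging: the action of $Y_\beta$ on a monomial $a^\nu$ is not $-\partial_{a_\beta}$ plus degree-preserving terms — from \ref{arparina} the derivative shifts all of $a_{i,1},\dots,a_{i,j}$ simultaneously, producing several terms with only the $l=j$ one dropping total degree — and iterating produces a power series with nonvanishing constant coefficient, not the constant function $1$ itself (the latter is Lemma \ref{1inM}, which requires a separate $p$-power averaging argument). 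Neither imprecision affects the conclusion $0\in M$.
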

\begin{proof}
$M \neq $ null, because if so, then $\mathcal{X}=0$, which is not true by assumption. Now if $\nu=(\nu_{\alpha},\alpha \in \Phi^-) \in M$, then by (\ref{specialf}) above $$f=a^{\nu}+\sum_{|r|>N}c_{r}a^{r} \in \mathcal{X}$$ (here $N \geq |\nu|, r \in \N^d$).
By \ref{arparina}, the action of $Y_{\beta}=Y_{(i,j)}$ on $f$ (where $\beta=(i,j) \in \Phi^-$ is fixed) is given by 
\begin{align*}
\begin{split}
Y_{\beta}(f)&=\frac{d}{dy}\Big|_{y=0}\Big((\prod_{\underset{u=i \implies v>j}{\alpha=(u,v)}}^{}a_{\alpha}^{\nu_{\alpha}})(\prod_{k=1}^j(a_{i,k}+ya_{j,k})^{\nu_{i,k}})\\
& \qquad \qquad + \sum_{|r|>N}c_r(\prod_{\underset{u=i \implies v>j}{\alpha=(u,v)}}^{}a_{\alpha}^{r_{\alpha}})(\prod_{k=1}^j(a_{i,k}+ya_{j,k})^{r_{i,k}})\Big)\\
& = \prod_{\underset{u=i \implies v>j}{\alpha=(u,v)}}a_{\alpha}^{\nu_{\alpha}}(\sum_{l=1}^j\nu_{i,l}a_{j,l}a_{i,l}^{\nu_{i,l}-1}\prod_{\underset{k \neq l}{k \in [1,j]}}a_{i,k}^{\nu_{i,k}})\\
& \qquad \qquad \sum_{|r|>N}c_r\prod_{\underset{u=i \implies v>j}{\alpha=(u,v)}}a_{\alpha}^{r_{\alpha}}(\sum_{l=1}^jr_{i,l}a_{j,l}a_{i,l}^{r_{i,l}-1}\prod_{\underset{k \neq l}{k \in [1,j]}}a_{i,k}^{r_{i,k}})\\
&=A+\sum_{|r|>N}c_rB
\end{split}
\end{align*}

The first term in the R.H.S of the above equation is 
 $$A:=\prod_{\underset{u=i \implies v>j}{\alpha=(u,v)}}a_{\alpha}^{\nu_{\alpha}}(\sum_{l=1}^j\nu_{i,l}a_{j,l}a_{i,l}^{\nu_{i,l}-1}\prod_{\underset{k \neq l}{k \in [1,j]}}a_{i,k}^{\nu_{i,k}})=\sum_{l=1}^j\nu_{i,l}a_{j,l}^{\nu_{j,l}+1}a_{i,l}^{\nu_{i,l}-1}\prod_{\underset{\alpha \neq (j,l)}{\alpha \neq (i,l)}}a_{\alpha}^{\nu_{\alpha}}$$ and  $$B=\prod_{\underset{u=i \implies v>j}{\alpha=(u,v)}}a_{\alpha}^{r_{\alpha}}(\sum_{l=1}^jr_{i,l}a_{j,l}a_{i,l}^{r_{i,l}-1}\prod_{\underset{k \neq l}{k \in [1,j]}}a_{i,k}^{r_{i,k}})=\sum_{l=1}^jr_{i,l}a_{j,l}^{r_{j,l}+1}a_{i,l}^{r_{i,l}-1}\prod_{\underset{\alpha \neq (j,l)}{\alpha \neq (i,l)}}a_{\alpha}^{r_{\alpha}}$$
 
 Notice that the monomials in $B$ has total degree $|r|$ except the term (when $l=j$) $r_{i,j}a_{i,j}^{r_{i,j}-1}\prod_{\alpha \neq (i,j)}a_{\alpha}^{r_{\alpha}}$ (note that $a_{j,j}=1$ by convention) which has total degree $|r|-1$. 
 
 As $Y_{(i,j)}(f) \in \mathcal{X}$, we see that $(\nu_{\alpha},\nu_{i,j}-1,\alpha \in \Phi^-,\alpha \neq (i,j)) \in M$; these are the exponents when we take $l=j$ in $A$. This shows that if $M \neq $ null, then $ 0 \in M$ because we can descend the $v_{i,j}$'s successively for every negative root  $(i,j)$ and this completes the proof of lemma \ref{0inM}.
\end{proof}

\begin{lemma}\label{1inM}
The constants $a^{0} \in \mathcal{X}$.
\end{lemma}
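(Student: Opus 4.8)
The plan is to realize the constant function $1$ as the $\chi$-isotypic component of an element of $\mathcal{X}$, extracted by averaging over the compact diagonal torus $T_G:=T_0\cap G$.

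First, since $0\in M$ by Lemma~\ref{0inM}, property~(\ref{specialf}) applied with $\nu=0$ and $N=0$ produces an element $g=1+\sum_{|r|\geq 1}c_r a^{r}\in\mathcal{X}$ with $|c_r|\to 0$. By~(\ref{eq:analyticexpression1}), $T_G\cong(1+p\Z_p)^n$ acts on the monomial $a^{r}$ through the character $t\mapsto\chi(t)\psi_{r}(t)$, where $\psi_{r}(t):=\prod_{\alpha=(i,j)\in\Phi^-}(t_j t_i^{-1})^{r_\alpha}$; these characters are pairwise distinct (this is precisely the fact already used above to split $\tau_N(\mathcal{X})$ into monomials), so $\psi_{r}$ is non-trivial for $r\neq 0$ and $\chi(t)^{-1}(t\cdot g)=1+\sum_{|r|\geq 1}c_r\psi_{r}(t)a^{r}$. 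I would then integrate $t\mapsto\chi(t)^{-1}(t\cdot g)$ over $T_G$ against the normalized Haar measure $dt$: since $\mathcal{X}$ is closed and $G$-stable this integral lies in $\mathcal{X}$, while integrating the displayed series term by term and using $\int_{T_G}dt=1$ together with $\int_{T_G}\psi_{r}\,dt=0$ for $r\neq 0$ (orthogonality of characters of a compact group) yields $\int_{T_G}\chi(t)^{-1}(t\cdot g)\,dt=1$. Hence $a^{0}=1\in\mathcal{X}$, which is the assertion.

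The point requiring a little care is the convergence bookkeeping that makes the averaging legitimate: that the integrand is a continuous function $T_G\to\mathcal{X}$, that its Riemann sums stay in $\mathcal{X}$, and that term-by-term integration of the series is valid. All of this follows once one records that $T_G$ acts by \emph{isometries} of $(\mathcal{A},|\,\cdot\,|)$ --- the substitution $a_\alpha\mapsto t_j t_i^{-1}a_\alpha$ is by units $t_j t_i^{-1}\in 1+p\Z_p$ and the scalar $\chi(t)$ lies in $\mathcal{O}_K^\times$, so both preserve the sup-norm --- combined with the closedness of $\mathcal{X}$ and the fact that $|c_r\psi_{r}(t)|=|c_r|\to 0$ uniformly in $t$. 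I do not anticipate a genuine obstacle here; the only alternative worth flagging is a measure-free variant, where one applies individual elements $t\in T_G$ to $g$, subtracts $g$, and passes to a limit using $|c_r|\to 0$ and closedness of $\mathcal{X}$, but this is the same argument with Haar measure replaced by finitely supported approximations, so the integral formulation is the cleaner packaging.
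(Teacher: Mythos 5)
Your plan hinges on integrating against a normalized Haar measure $dt$ on $T_G=T_0\cap G\cong(1+p\Z_p)^n$ and then invoking orthogonality of characters, and this is where the argument breaks down: no such measure exists in the $p$-adic setting. A $K$-valued, translation-invariant, bounded (or even merely $K$-valued and finitely additive) measure $\mu$ on a profinite group $T$ must satisfy $\mu(U)=[T:U]^{-1}\mu(T)$ for every open subgroup $U$. Since $T_G$ is pro-$p$, the indices $[T_G:U]$ are unbounded powers of $p$, so $|\mu(U)|=|p|^{-v_p([T_G:U])}|\mu(T_G)|$ is unbounded unless $\mu(T_G)=0$; equivalently, the Iwasawa algebra $\OO_K[[T_G]]$ is local with residue field of characteristic $p$ and therefore contains no nontrivial idempotent that could serve as the ``projector onto the trivial isotypic component.'' For the same reason the ``finitely supported approximations'' you propose as a fallback do not converge: the Riemann sums carry a factor $p^{-N}$ with $|p^{-N}|\to\infty$, so there is no limit to pass to, and the closedness of $\mathcal{X}$ never comes into play. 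The isometry observation about the $T_G$-action on $(\mathcal{A},|\cdot|)$ is correct but irrelevant to this obstruction. Thus the claimed identity $\int_{T_G}\psi_r\,dt=0$ for $r\neq 0$ is not available, and the whole averaging step is undefined.

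The paper's proof achieves the same goal (killing all $a^r$ with $r\neq 0$ while retaining the constant term) by a genuinely $p$-adic mechanism that replaces averaging by iteration plus Fermat's little theorem. It acts by the Lie algebra element $T_k$ (the infinitesimal generator of the $k$-th diagonal one-parameter subgroup), which multiplies $a^r$ by the integer $\lambda_r(k)=\sum_\delta r_\delta-\sum_\beta r_\beta$; then $T_k^{p-1}$ multiplies by $\lambda_r(k)^{p-1}$, so $E_k:=1-T_k^{p-1}$ multiplies $a^r$ by $1-\lambda_r(k)^{p-1}$, which has absolute value $<1$ when $p\nmid\lambda_r(k)$ (Fermat) and is $\equiv 1\pmod{p^{p-1}}$ when $p\mid\lambda_r(k)$. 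Iterating $E_k$ along $p$-power exponents kills the former monomials and retains the latter, and repeating over all $k$ and over higher powers of $p$ pins down the surviving multi-indices $r$ to those with $|r|=0$ or $|r|$ large, at which point $|c_r|\to 0$ and closedness of $\mathcal{X}$ finish the job. This is not a repackaging of the averaging argument: the role that orthogonality plays in the archimedean story is played here by the little-theorem dichotomy together with a carefully chosen subsequence of powers, and that ingredient is missing from your proposal. To fix the proof you would need to abandon the integral and implement this Lie-algebra/Fermat iteration (or some equivalent device), not just supply more careful convergence bookkeeping.
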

\begin{proof}
Let $T_k \in \mathfrak{g}$ be the infinitesimal generator associated to the diagonal subgroup $Diag(1,...,t_k,...,1)$, $t_k\in 1+p\Z_p$, $t_k$ is at the $(k,k)$-th place. By lemma \ref{0inM}, $0\in M$. This implies that $$f=c_0+\sum_{|r|>0}c_ra^r \in \mathcal{X}$$ ($c_0 \neq 0$). We will essentially follow the proof given by Clozel for $GL(2)$. By equation \ref{diago}, from the action of $Diag(1,...,t_k,...,1)$ on $f$, the function obtained from $T_k(f),$


\begin{equation}
\label{iti}
\sum_{|r|>0}c_r(\sum r_{\delta}-\sum r_{\beta})a^r \in \mathcal{X}
\end{equation}
 where 
$\sum r_{\delta}$ is  $\sum_{\underset{i \in [k+1,n]}{\delta =(i,k)}}r_{\delta}$ and $\sum r_{\beta}$ is  $\sum_{\underset{j \in [1,k-1]}{\beta =(k,j)}}r_{\beta}$.

 The function obtained from $T_k^{p-1}(f),$ 
$$\sum_{|r|>0}c_r(\sum r_{\delta}-\sum r_{\beta})^{p-1}a^r \in \mathcal{X}.$$
This implies that $$E_k f:=c_0+\sum_{|r|>0}c_r(1-(\sum r_{\delta}-\sum r_{\beta})^{p-1})a^r \in \mathcal{X}.$$
If $p \mid \sum r_{\delta}-\sum r_{\beta}$, then $(1-(\sum r_{\delta}-\sum r_{\beta})^{p-1})^l \rightarrow 1$ as $l \rightarrow \infty$. If $p \nmid  \sum r_{\delta}-\sum r_{\beta}$, then $(1-(\sum r_{\delta}-\sum r_{\beta})^{p-1})^l \rightarrow 0$ as $l \rightarrow \infty$. Then $$A_{k,1}f:=c_0+\sum_{\underset{p \mid \sum r_{\delta}-\sum r_{\beta}}{|r|>0}}c_ra^r \in \mathcal{X}.$$ Similar to \ref{iti}, applying now, the transformation $T_k$ on $A_{k,1}f$, dividing by $p$, and iterating all the above steps, we see that
$$A_{k,2}(f):=c_0+\sum_{\underset{p^2 \mid \sum r_{\delta}-\sum r_{\beta}}{|r|>0}}c_ra^r \in \mathcal{X}.$$ Iterating again and again $s$ times, for $s \in \N$, we obtain
$$A_{k,s}(f):=c_0+\sum_{\underset{p^s \mid \sum r_{\delta}-\sum r_{\beta}}{|r|>0}}c_ra^r \in \mathcal{X}.$$ 
This implies, for $s \in \N$, 
\begin{equation}
\label{what}
(\prod_{k=1}^nA_{k,s})(f)=c_0+Q_s(f)\in \mathcal{X}
\end{equation}
where $Q_s(f)=\sum c_ra^r$ where the sum runs over all $r=(r_{\alpha}, \alpha \in \Phi^-)$ with $|r|>0$ such that for all $k \in [1,n]:$
$$p^s \mid \sum_{\substack{\delta=(i,k)\\i\in[k+1,n]}}r_{\delta}-\sum_{\substack{\beta=(k,j)\\j\in [1,k-1]}}r_{\beta}.$$
We need to show that $Q_s(f) \rightarrow 0$ as $s \rightarrow \infty$, i.e., we have to show that 
\begin{align}
&\forall N_{\epsilon}, \exists S, \text { such that } \forall s>S, val_p(c_r)>N_{\epsilon},
\forall r \in \N^d \text{ such that } |r|>0,\label{toshow1}\\
& \qquad \qquad \text{ and }p^s \mid \Big(\sum_{\substack{\delta=(i,k)\\i\in[k+1,n]}}r_{\delta}-\sum_{\substack{\beta=(k,j)\\j\in [1,k-1]}}r_{\beta}\Big)  \text{ \hspace{.5cm}      }  \text{ for all } k\in [1,n]\label{toshow2}. 
\end{align}
But as $f$ is globally analytic, $|c_r| \rightarrow 0$ as $|r|=\sum_{\alpha \in \Phi^-}r_{\alpha} \rightarrow \infty$, 
which means that 
\begin{align}
& \forall N_{\epsilon}, \exists S^{\prime} \text{ such that whenever } |r|>S^{\prime} \label{have1}\\
& \text{ we have }val_p(c_r)>N_{\epsilon} \label{have2}.
\end{align}
Any $S$ such that $p^S>S^{\prime}$ will work in \ref{toshow1}. This is because, take any $r$, such that $|r|>0$ and 
 $$p^s \mid \Big(\sum_{\substack{\delta=(i,k)\\i\in[k+1,n]}}r_{\delta}-\sum_{\substack{\beta=(k,j)\\j\in [1,k-1]}}r_{\beta}\Big) \text{ \hspace{.5cm}      } \text{ for all } k\in [1,n]$$

(i.e. satisfying equation \ref{toshow2}) with $s>S$ (cf. (\ref{toshow1})). \\

For $k=1$,  (\ref{toshow2}) implies $p^s \mid r_{2,1}+r_{3,1}+r_{4,1}+\cdots +r_{n,1}$ which means $ r_{2,1}+r_{3,1}+r_{4,1}+\cdots +r_{n,1}\geq p^s>S^{\prime}$ except when $r_{2,1}=r_{3,1}=r_{4,1}=\cdots =r_{n,1}=0$. If this happens, then consider (\ref{toshow2}) with $k=2$, i.e. $p^s \mid r_{3,2}+r_{4,2}+\cdots +r_{n,2}-(r_{2,1}=0),$ i.e. $r_{3,2}+r_{4,2}+\cdots +r_{n,2} \geq p^s >S^{\prime}$ except when $r_{3,2}=r_{4,2}=\cdots =r_{n,2}=0$. Repeating this process, since we have started with an $r$ such that $|r|>0$, we see that any $r$ as in (\ref{toshow2}), with $|r|>0$, satisfies $|r|>S^{\prime}$ for all $s >S$ and this by (\ref{have1}) and (\ref{have2}) implies that $val_p(c_r)>N_{\epsilon}$ which was the desired condition in (\ref{toshow1}). (Here $S$ is chosen such that $p^S>S^{\prime}$). This shows that $Q_s(f) \rightarrow 0$ as $s \rightarrow \infty$ which gives $c_0 \in \mathcal{X}$ (cf. (\ref{what})). This completes the proof of lemma \ref{1inM}.
\end{proof}

In the following, we complete the proof of Theorem \ref{thm:holomorphic} which was to find conditions such that the globally analytic representation $\mathcal{A}$ of $G$ is topologically irreducible. It uses an argument concerning Verma modules and the condition of irreducibility of $\mathcal{A}$ comes from a result of Bernstein-Gelfand determining the condition of irreducibility of that Verma module. 

 With notations as in section \ref{sub:sectionbigger}, let $\mathfrak{g}=\Lie(G), \mathfrak{h}=\Lie (T_0), \mathfrak{b} \text{ } (\text{resp. } \mathfrak{b}^-)$ be the upper (resp. lower) triangular Borel subalgebra containing $\mathfrak{h}$.  Let $\mathfrak{u}^{-}=\Lie(U)$. Recall that here $c_i$'s $\in K$ are such that $\chi_i(t)=t^{c_i}$, for $t \rightarrow 1$. Let $$V_{-\mu}:=U(\mathfrak{g}) \otimes_{U(\mathfrak{b}^-)} K$$ be the Verma module
 where $U(\mathfrak{b}^-)$ acts on $K$ via the action of $\mathfrak{b}^-=\mathfrak{u}^- \oplus \mathfrak{h},\mathfrak{u}^-$ acting trivially and $\mathfrak{h}$ via 
 
 $-\mu \in \mathfrak{h}^{\star}=\Hom(\mathfrak{h},K)$ given by 
 \begin{equation}\label{mu}
 \mu=(-c_1,...,-c_n):Diag(t_1,...,t_n)\mapsto \sum_{i=1}^n-c_it_i
 \end{equation}
where $t=(t_i) \in \mathfrak{h}, U(\mathfrak{g})$ is the Universal enveloping algebra of $\mathfrak{g}$.   (Note that Dixmier has a different normalization for the Verma module     \cite[section 7.1.14]{Dix}).

Let $\mathcal{A}_{\fin}$ be the set of polynomials within the rigid analytic functions $\mathcal{A}$. For $k\in [1,n]$, let $T_k \in \mathfrak{h}$ be the infinitesimal generator associated to the one parameter diagonal subgroup $Diag(1,...,t_k,...,1)$, $t_k\in 1+p\Z_p$, $t_k$ is at the $(k,k)$-th place and $f=a^r \in \mathcal{A}_{\fin}$. The elements $T_k$ form a basis of $\mathfrak{h}$. By equations \ref{eq:diag} and \ref{diago}, the action of $Diag(1,...,t_k,...,1)$ on $f$ is given by 
$$Diag(1,...,t_k,...,1)(f)=\Big((\prod_{\underset{u,v \neq k}{\alpha=(u,v)}}a_{\alpha}^{r_{\alpha}})(\prod_{\underset{i \in [k+1,n]}{\delta=(i,k)}}a_{\delta}^{r_{\delta}}t_k^{r_{\delta}})(\prod_{\underset{j \in [1,k-1]}{\beta=(k,j)}}a_{\beta}^{r_{\beta}}t_k^{-r_{\beta}})\Big)\Big(\chi_k(t_k)\Big)$$
As $\chi_k(t_k)=t_k^{c_k}$, so the action of $T_k$ on $f$ is 

\begin{align}
T_k \cdot f&=c_ka^r+ (\sum_{\underset{i \in [k+1,n]}{\delta=(i,k)}}r_\delta - \sum_{\underset{j \in [1,k-1]}{\beta=(k,j)}}r_\beta)a^r\\
&=(c_k+ \sum_{\underset{i \in [k+1,n]}{\delta=(i,k)}}r_\delta - \sum_{\underset{j \in [1,k-1]}{\beta=(k,j)}}r_\beta)a^r\\
&=(-\mu-\sum_{i=1}^d \alpha_ir_{\alpha_i})(T_k)a^r
\end{align}
Here $\alpha_i$'s are the negative roots.\\

Thus if $H \in \mathfrak{h}$, then 
\begin{equation}\label{twojstar}
H\cdot a^r=(-\mu-\sum_{i=1}^d \alpha_ir_{\alpha_i})(H)a^r
\end{equation}
Decomposing $\mathcal{A}_{\fin}=\oplus_{\xi \in \mathfrak{h}^{\star}}\mathcal{A}_{\fin}(\xi)$ in the form of $\mathfrak{h}$- eigenspaces, we see from \ref{twojstar} that the monomials $a^r$ are $\mathfrak{h}$-finite and and the dimensions of eigenspaces of $\mathcal{A}_{\fin}$ under $\mathfrak{h}$ are finite: The eigenvectors are of the form $\xi \in -\mu-\sum_{i=1}^d \N \alpha_i \in \mathfrak{h}^{\star}$ and the multiplicity $\mult(\xi)=\dim \mathcal{A}(\xi)$ of $\xi$ equals 
\begin{equation}\label{mult}
\dim \mathcal{A}(\xi)=\mult(\xi)=\{\text{number of families } (r_{\alpha_i})\in \N^d \mid \xi=-\mu-\sum_{i=1}^dr_{\alpha_i}\alpha_i\},
\end{equation}
which is finite. 

With $f_0=1 \in \mathcal{A}_{\fin}$, $H\cdot f_0=-\mu(H)f_0$ and the action  $\mathfrak{u}^- \cdot f_0=0$ because the action of any element of $\mathfrak{u}^-$ on $f_0$ is given by derivation (cf. proof of  Lemma \ref{0inM}). So, the map $u \rightarrow u\cdot f_0 $  for $u \in \mathfrak{g}$ induces a $\mathfrak{g}$-homomorphism 
\begin{align}\label{VermatoA}
\phi: V_{-\mu} &\rightarrow \mathcal{A},
\end{align}
where $V_{-\mu}:=U(\mathfrak{g}) \otimes_{U(\mathfrak{b}^-)}K.$

Moreover, $v \in V_{-\mu}$ implies $v$ is $\mathfrak{h}$ - finite (cf. \cite[Chapter 7]{Dix}). This gives $\phi(v) \in \mathcal{A}$ is $\mathfrak{h}$ - finite which means that $\phi(v) \in \mathcal{A}_{\fin}$. This is because equation \ref{twojstar} gives, by continuity, that $f \in \mathcal{A}$, $f=\sum_{r=(r_{\alpha_i})}c_ra^r$ implies 
\begin{equation}\label{hotstar}
H\cdot f=\sum_{r=(r_{\alpha_i})}(-\mu-\sum_{i=1}^dr_{\alpha_i}\alpha_i)(H)c_ra^r.
\end{equation}
Then $H\cdot f=\lambda f$ implies $\lambda=(-\mu-\sum_{i=1}^dr_{\alpha_i}\alpha_i)(H)$ if $c_r \neq 0$. Therefore the cardinality of the set $\{c_r \neq 0\}$ is finite, the $\mathfrak{h}$-finite vectors of $\mathcal{A}$ are just $\mathcal{A}_{\fin}.$\\

The map $\phi:V_{-\mu} \rightarrow \mathcal{A}_{\fin}$ in \ref{VermatoA} is clearly non-zero because the vector $1\in V_{-\mu}$ goes to $f_0$.

\begin{lemma}\label{irreA}
If the Verma module $V_{-\mu}$ is irreducible then the globally analytic $G$- representation $\mathcal{A}$ is irreducible.
\end{lemma}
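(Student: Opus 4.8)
The plan is to prove that any nonzero closed $G$-invariant subspace $\mathcal{X}\subseteq\mathcal{A}$ must equal $\mathcal{A}$. Since the polynomials $\mathcal{A}_{\fin}$ are dense in the Tate algebra $\mathcal{A}$ and $\mathcal{X}$ is closed, it is enough to show $\mathcal{A}_{\fin}\subseteq\mathcal{X}$.

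First I would observe that $\mathcal{X}$ is stable under the infinitesimal action of $\mathfrak{g}=\Lie(G)$. By Theorem \ref{thmmainglo} the $G$-action on $\mathcal{A}$ is globally analytic, so for $v\in\mathcal{X}$ and any one-parameter subgroup the orbit map is analytic; its derivative at the origin, which is the image of $v$ under a generator of $\mathfrak{g}$ (as computed in \ref{arparina}, \ref{diago} and via the $XZ$-decomposition for the upper unipotent generators), is a limit of elements of the closed subspace $\mathcal{X}$ and hence lies in $\mathcal{X}$. Therefore $U(\mathfrak{g})\cdot v\subseteq\mathcal{X}$ for every $v\in\mathcal{X}$. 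Assuming $\mathcal{X}\neq 0$, Lemma \ref{1inM} gives $f_0=1\in\mathcal{X}$, and consequently $\phi(V_{-\mu})=U(\mathfrak{g})\cdot f_0\subseteq\mathcal{X}$, where $\phi$ is the $\mathfrak{g}$-homomorphism of \ref{VermatoA}.

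It then remains to see that $\phi\colon V_{-\mu}\to\mathcal{A}_{\fin}$ is an isomorphism. It is injective: $\ker\phi$ is a $U(\mathfrak{g})$-submodule of $V_{-\mu}$, proper because $\phi(1)=f_0\neq 0$, hence zero by the assumed irreducibility of $V_{-\mu}$. It is surjective onto $\mathcal{A}_{\fin}$ by a comparison of $\mathfrak{h}$-weight spaces: $\phi$ is $\mathfrak{h}$-equivariant, and writing $\mathfrak{g}=\mathfrak{u}^+\oplus\mathfrak{h}\oplus\mathfrak{u}^-$ with $\mathfrak{b}^-=\mathfrak{h}\oplus\mathfrak{u}^-$, the PBW theorem identifies $V_{-\mu}$ with $U(\mathfrak{u}^+)$ as an $\mathfrak{h}$-module (shifted by $-\mu$), so the weight space of $V_{-\mu}$ of weight $-\mu-\sum_{\alpha\in\Phi^-}r_\alpha\alpha$ has dimension $\#\{(r'_\alpha)_{\alpha\in\Phi^-}\in\N^d:\sum_\alpha r'_\alpha\alpha=\sum_\alpha r_\alpha\alpha\}$, using the identification of $\Phi^-$ with a basis of $\mathfrak{u}^+$ via $\alpha\mapsto-\alpha$. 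This is exactly the multiplicity $\mult$ of the same weight in $\mathcal{A}_{\fin}$ recorded in \ref{mult}. Each such weight space is finite-dimensional, so the injective $\mathfrak{h}$-equivariant map $\phi$ restricts to a bijection on it, whence $\phi(V_{-\mu})=\bigoplus_\xi\mathcal{A}_{\fin}(\xi)=\mathcal{A}_{\fin}$.

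Putting the two steps together, $\mathcal{A}_{\fin}=\phi(V_{-\mu})\subseteq\mathcal{X}$, and passing to closures gives $\mathcal{X}=\mathcal{A}$; hence $\mathcal{A}$ is topologically irreducible. The step I expect to require the most care is the surjectivity of $\phi$, namely matching the PBW count of weight multiplicities of the Verma module $V_{-\mu}$ with the monomial count \ref{mult} in $\mathcal{A}_{\fin}$, together with the use of irreducibility of $V_{-\mu}$ for the injectivity; the remark that a closed $G$-stable subspace is automatically stable under $\mathfrak{g}$ is standard but also worth spelling out.
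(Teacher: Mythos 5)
Your proof is correct and follows essentially the same route as the paper: use Lemma \ref{1inM} to place the constant function in any nonzero closed $G$-invariant subspace $\mathcal{X}$, use irreducibility of $V_{-\mu}$ for injectivity of $\phi$ and the weight-multiplicity comparison (you derive it from PBW; the paper cites Dixmier's formula, but these are the same count) for surjectivity, conclude $\mathcal{A}_{\fin}=U(\mathfrak{g})\cdot 1\subseteq\mathcal{X}$, and pass to closures. The only genuine addition is that you make explicit that a closed $G$-invariant subspace is $\mathfrak{g}$-stable, a fact the paper also uses here but only states explicitly just before Lemma \ref{obv}.
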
 
\begin{proof}
Suppose that the Verma module $V_{-\mu}$ is irreducible. Then the map $\phi:V_{-\mu} \rightarrow \mathcal{A}_{\fin}$ is injective. Also by \ref{mult}, under the action of $\mathfrak{h}$, since the eigenvectors of $V_{-\mu}$ and $\mathcal{A}_{\fin}$ and their multiplicities match, that is $\dim \mathcal{A}(\xi)=\dim \mathcal{A}_{\fin}(\xi)= \dim V_{-\mu}(\xi),$ we deduce that $\phi$ is an isomorphism.

Indeed the dimension of $\dim \mathcal{A}(\xi)$ is given by \ref{mult}, on the other hand, using that our Verma module $V_{-\mu}$ is defined by $\mathfrak{b}^-$ and $-\mu$ (rather than $\lambda-\rho^-$ as in Dixmier's parametrization \cite[7.1.4]{Dix}), Dixmier's formula \cite[7.1.6]{Dix} yields
$$\dim V_{-\mu}(\xi)=\mult(\xi)=\{\text{number of families } (r_{\alpha_i})\in \N^d \mid \xi=\lambda-\rho^--\sum_{i=1}^dr_{\alpha_i}\alpha_i\}$$
where $\rho^-=\frac{1}{2}\sum_{\alpha \in \Phi^-}\alpha$ is half the sum of negative roots (because notice that we have used $\mathfrak{b}^-$ to define the Verma module instead of Dixmier's $\mathfrak{b}^+$). We easily see  that the above dimension  $\dim V_{-\mu}(\xi)$ is equivalent to $\dim \mathcal{A}(\xi)$ (\ref{mult}) with $\lambda-\rho^-=-\mu$.

So $V_{-\mu} \cong \mathcal{A}_{\fin}$. Suppose $\mathcal{X}$ is a nonzero closed subspace of $\mathcal{A}$. Then by lemma \ref{1inM}, we have $1 \in \mathcal{X}$. This gives $\mathcal{A}_{\fin}=U(\mathfrak{g})\cdot 1 \subset \mathcal{X}$. Since $\mathcal{X}$ is closed $\mathcal{X}=\mathcal{A}$.
\end{proof}

Now we prove the converse of Lemma \ref{irreA}. \\

Recall that a closed subspace of $\mathcal{A}$ is $G$-invariant if and only if it is invariant by $\mathfrak{g}$ (\cite[Proposition 2.4]{Clozel1}). Moreover it follows from the definition of globally analytic representations (compare \cite[Section 2.2]{Clozel1}) that the action of $\mathfrak{g}$ on $\mathcal{A}$ is continuous. If $V \subset \mathcal{A}_{\fin}$ is invariant by $\mathfrak{g}$, it follows that its closure $\overline{V}$ is $G$-invariant. 

Recall that $\mathcal{A}_{\fin}$ is the set of $\mathfrak{h}$-finite vectors in $\mathcal{A}$. In particular, if $\mathcal{X} \subset \mathcal{A}$ is closed, the space $\mathcal{X}_{\mathfrak{h}-\fin}$ of $\mathfrak{h}$-finite vectors in $\mathcal{X}$ is $\mathcal{X} \cap \mathcal{A}_{\fin}$.
\begin{lemma}\label{obv}
Assume $V \subset \mathcal{A}_{\fin}$ is invariant by $\mathfrak{g}$. Then $V=\overline{V} \cap \mathcal{A}_{\fin}=\overline{V}_{\mathfrak{h}-\fin}.$
\end{lemma}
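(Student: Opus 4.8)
The plan is to reduce everything to the weight-space (i.e. $\mathfrak h$-eigenspace) decomposition of $\mathcal{A}_{\fin}$ together with a soft continuity argument. First note that $\overline{V}$ is closed and, by the discussion preceding the lemma, $G$-invariant; hence, by the cited fact that $\mathcal{A}_{\fin}$ is \emph{exactly} the set of $\mathfrak h$-finite vectors of $\mathcal A$, we get $\overline{V}_{\mathfrak{h}-\fin}=\overline{V}\cap\mathcal{A}_{\fin}$ for free. Since $V\subseteq\mathcal{A}_{\fin}$ and $V\subseteq\overline{V}$, the inclusion $V\subseteq\overline{V}\cap\mathcal{A}_{\fin}$ is trivial, so the whole content of the lemma is the reverse inclusion $\overline{V}\cap\mathcal{A}_{\fin}\subseteq V$.

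For each $\xi\in\mathfrak{h}^{\star}$ I would introduce the projector $\pi_{\xi}\colon\mathcal{A}\to\mathcal{A}$ defined on $f=\sum_{r\in\N^{d}}c_{r}a^{r}$ by $\pi_{\xi}(f)=\sum_{r\,:\,-\mu-\sum_{i}r_{\alpha_i}\alpha_i=\xi}c_{r}a^{r}$. This makes sense because the condition $|c_{r}|\to 0$ is inherited by any subfamily of indices, and one has $\lVert\pi_{\xi}(f)\rVert\le\lVert f\rVert$, so $\pi_{\xi}$ is a continuous $K$-linear operator on $\mathcal{A}$. By \ref{twojstar} (equivalently \ref{hotstar}), the restriction of $\pi_{\xi}$ to $\mathcal{A}_{\fin}$ is precisely the projection onto the $\mathfrak h$-eigenspace $\mathcal{A}_{\fin}(\xi)$ along the other eigenspaces, and any $f\in\mathcal{A}_{\fin}$, being a polynomial, is a \emph{finite} sum $f=\sum_{\xi\in F}\pi_{\xi}(f)$, where $F$ is the finite set of eigencharacters occurring in $f$.

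Next, since $V$ is $\mathfrak g$-invariant it is $\mathfrak h$-invariant, and being contained in the semisimple $\mathfrak h$-module $\mathcal{A}_{\fin}$ it splits as $V=\bigoplus_{\xi}V(\xi)$ with $V(\xi):=V\cap\mathcal{A}_{\fin}(\xi)$ — the usual extraction of the $\xi$-component of a $v\in V$ by applying a Vandermonde polynomial in a single $H\in\mathfrak h$ separating the finitely many eigenvalues occurring in $v$. Hence $\pi_{\xi}(V)=V(\xi)$. Now take $f\in\overline{V}\cap\mathcal{A}_{\fin}$ and a sequence $v_{n}\in V$ with $v_{n}\to f$. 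Continuity of $\pi_{\xi}$ gives $\pi_{\xi}(f)=\lim_{n}\pi_{\xi}(v_{n})$ with $\pi_{\xi}(v_{n})\in\pi_{\xi}(V)=V(\xi)$, so $\pi_{\xi}(f)\in\overline{V(\xi)}$. But $V(\xi)$ is a subspace of the finite-dimensional $K$-vector space $\mathcal{A}_{\fin}(\xi)$, hence finite-dimensional, hence closed in the $K$-Banach space $\mathcal{A}$ (the ground field being complete); thus $\pi_{\xi}(f)\in V(\xi)\subseteq V$ for every $\xi$, and summing over the finitely many $\xi\in F$ yields $f=\sum_{\xi\in F}\pi_{\xi}(f)\in V$. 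This gives the desired inclusion.

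The only points needing a little care are the well-definedness and boundedness of $\pi_{\xi}$ on all of $\mathcal{A}$ (so that it commutes with limits) and the closedness of finite-dimensional subspaces of $\mathcal{A}$; the rest is the formal theory of semisimple modules over the abelian Lie algebra $\mathfrak h$, and I expect no genuine obstacle.
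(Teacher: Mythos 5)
Your proof is correct and essentially the same as the paper's: both rest on the continuity of the weight-space projectors, the decomposition of the $\mathfrak g$-stable (hence $\mathfrak h$-stable) subspace $V$ into weight spaces $V(\xi)$, and the closedness of finite-dimensional subspaces of a $K$-Banach space. The only cosmetic difference is that you project weight-by-weight and sum at the end, whereas the paper projects once onto the finite direct sum $\oplus_{\xi}\mathcal{A}(\xi)$ of the weights occurring in the given vector.
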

\begin{proof}
By \ref{mult}, $\mathcal{A}(\xi)$ is the subspace of the Tate algebra spanned by a finite number of monomials $a^r$. In particular, the obvious projection $p_{\xi}:\mathcal{A} \rightarrow \mathcal{A}(\xi)$ is continuous. Assume $v \in \overline{V} \cap \mathcal{A}_{\fin}$. Thus $v \in \oplus_{\xi}\mathcal{A}(\xi)$ (finite sum of finite-dimensional subspaces) and $v =\lim v_m, v_m \in V$. If $P$ is the projection on $\oplus_{\xi}\mathcal{A}(\xi)$, $v=Pv=\lim Pv_m$. But $Pv^{\prime} \in V \cap \oplus_{\xi}\mathcal{A}(\xi)$ for any $v^{\prime} \in V$. Thus $v \in V$, as a limit in a finite-dimensional space.
\end{proof}
Lemma \ref{obv} obviously gives the following Corollary.
\begin{corollary}\label{impcorirr}
Suppose $V$ is a non-zero proper subspace of $\mathcal{A}_{\fin}$ stable by $\mathfrak{g}$. Then $\overline{V}$ is a non-zero proper closed $G$-invariant subspace of $\mathcal{A}$.
\end{corollary}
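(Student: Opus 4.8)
The plan is to verify, one by one, the three properties asserted for $\overline{V}$ — nonzero, closed, $G$-invariant, and proper — the first two being immediate, the third following from the discussion preceding Lemma \ref{obv}, and the fourth (properness) being the only point with any content, handled by Lemma \ref{obv} itself.

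First I would dispose of the trivial parts. Since $V \neq 0$ and $V \subseteq \overline{V}$, the subspace $\overline{V}$ is nonzero; and $\overline{V}$ is closed by definition of the closure. For $G$-invariance I would invoke the remarks made just before Lemma \ref{obv}: the action of $\mathfrak{g}$ on $\mathcal{A}$ is continuous (this is part of the definition of a globally analytic representation, cf.\ \cite[Section 2.2]{Clozel1}), so the closure of a $\mathfrak{g}$-stable subspace is again $\mathfrak{g}$-stable; since $V$ is stable by $\mathfrak{g}$ by hypothesis, $\overline{V}$ is stable by $\mathfrak{g}$, and by \cite[Proposition 2.4]{Clozel1} a closed subspace of $\mathcal{A}$ is $G$-invariant if and only if it is invariant by $\mathfrak{g}$. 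Hence $\overline{V}$ is $G$-invariant.

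It remains to show $\overline{V} \neq \mathcal{A}$, and here I would argue by contradiction. Suppose $\overline{V} = \mathcal{A}$. Intersecting with $\mathcal{A}_{\fin}$ gives $\overline{V} \cap \mathcal{A}_{\fin} = \mathcal{A}_{\fin}$. On the other hand $V$ is a $\mathfrak{g}$-stable subspace contained in $\mathcal{A}_{\fin}$, so Lemma \ref{obv} applies verbatim and yields $V = \overline{V} \cap \mathcal{A}_{\fin} = \mathcal{A}_{\fin}$, contradicting the assumption that $V$ is a \emph{proper} subspace of $\mathcal{A}_{\fin}$. Therefore $\overline{V}$ is a proper subspace of $\mathcal{A}$, and combined with the above it is a nonzero proper closed $G$-invariant subspace, as claimed.

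The only real obstacle here is the properness step, and it rests entirely on Lemma \ref{obv}; that lemma in turn uses only the finite-dimensionality of the $\mathfrak{h}$-eigenspaces $\mathcal{A}(\xi)$ (equation \ref{mult}) and the continuity of the projections $p_\xi \colon \mathcal{A} \to \mathcal{A}(\xi)$, so no additional input is needed. This is why the corollary follows \emph{obviously} from Lemma \ref{obv}: the deduction is essentially the single line $\overline{V} = \mathcal{A} \Rightarrow V = \overline{V} \cap \mathcal{A}_{\fin} = \mathcal{A}_{\fin}$, together with the bookkeeping for the remaining adjectives.
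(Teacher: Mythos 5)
Your proof is correct and is exactly the argument the paper intends when it says ``Lemma \ref{obv} obviously gives the following Corollary'': the non-triviality, closedness, and $G$-invariance of $\overline{V}$ follow from the remarks preceding Lemma \ref{obv}, and properness follows by intersecting with $\mathcal{A}_{\fin}$ and invoking that lemma. You have simply spelled out the details left implicit in the paper.
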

\begin{lemma}\label{irreAconv}
If the globally analytic $G$- representation $\mathcal{A}$ is irreducible then  the Verma module $V_{-\mu}$ is irreducible.
\end{lemma}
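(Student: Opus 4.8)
The plan is to prove the contrapositive: assuming the Verma module $V_{-\mu}$ is reducible, I will produce a nonzero proper closed $G$-invariant subspace of $\mathcal{A}$, contradicting the hypothesis that $\mathcal{A}$ is irreducible. All the ingredients are already available from the discussion preceding Lemma \ref{irreA}: the nonzero $\mathfrak{g}$-equivariant map $\phi\colon V_{-\mu}\to\mathcal{A}_{\fin}$ of \ref{VermatoA} (nonzero since $1\mapsto f_0=1$), which respects the $\mathfrak{h}$-weight decompositions, together with the multiplicity identity
$$\dim V_{-\mu}(\xi)=\dim\mathcal{A}_{\fin}(\xi)=\dim\mathcal{A}(\xi)<\infty\qquad(\xi\in\mathfrak{h}^{\star}),$$
which is exactly \ref{mult} combined with Dixmier's formula \cite[7.1.6]{Dix} under the normalization $\lambda-\rho^-=-\mu$ (as already used in the proof of Lemma \ref{irreA}); and Corollary \ref{impcorirr}, which turns a nonzero proper $\mathfrak{g}$-stable subspace of $\mathcal{A}_{\fin}$ into a nonzero proper closed $G$-invariant subspace of $\mathcal{A}$.

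The first step is to show that $\mathcal{A}_{\fin}$ is a reducible $\mathfrak{g}$-module. I would split into two cases according to whether $\phi$ is injective. If $\ker\phi\neq 0$, then since $\phi$ is nonzero we have $0\subsetneq\ker\phi\subsetneq V_{-\mu}$; being $\mathfrak{g}$-stable, $\ker\phi$ is $\mathfrak{h}$-stable and hence a direct sum of weight spaces, so $\ker\phi\cap V_{-\mu}(\xi_0)\neq 0$ for some $\xi_0$. Then $\mathrm{im}\,\phi$ is a nonzero $\mathfrak{g}$-submodule of $\mathcal{A}_{\fin}$ whose $\xi_0$-weight space has dimension $\dim V_{-\mu}(\xi_0)-\dim(\ker\phi\cap V_{-\mu}(\xi_0))<\dim\mathcal{A}_{\fin}(\xi_0)$, so $\mathrm{im}\,\phi$ is a proper subspace. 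If instead $\phi$ is injective, then on each finite-dimensional weight space it is an injection between spaces of equal dimension, hence an isomorphism there, so $\phi\colon V_{-\mu}\to\mathcal{A}_{\fin}$ is an isomorphism of $\mathfrak{g}$-modules; then the image under $\phi$ of a nonzero proper $\mathfrak{g}$-submodule of the reducible module $V_{-\mu}$ is a nonzero proper $\mathfrak{g}$-submodule of $\mathcal{A}_{\fin}$.

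In either case $\mathcal{A}_{\fin}$ contains a nonzero proper $\mathfrak{g}$-stable subspace $V$, and Corollary \ref{impcorirr} then shows that $\overline{V}$ is a nonzero proper closed $G$-invariant subspace of $\mathcal{A}$, contradicting the irreducibility of $\mathcal{A}$. Hence $V_{-\mu}$ must be irreducible, which together with Lemma \ref{irreA} establishes the equivalence (and, via the Bernstein--Gelfand criterion for irreducibility of $V_{-\mu}$, the numerical condition in Theorem \ref{thm:holomorphic}).

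The main obstacle — really the only delicate point — is the bookkeeping with weight spaces: one must use that $\phi$, although only $\mathfrak{g}$-equivariant, is in particular $\mathfrak{h}$-equivariant and therefore preserves the $\mathfrak{h}$-eigenspace decomposition, and that the two sides have literally the same finite weight multiplicities, so that injectivity of $\phi$ can be upgraded to surjectivity. This is exactly where \ref{mult} and the Dixmier normalization $\lambda-\rho^-=-\mu$ are indispensable. The passage from the algebraic statement (a $\mathfrak{g}$-submodule of $\mathcal{A}_{\fin}$) to the topological one (a closed $G$-invariant subspace of $\mathcal{A}$) presents no further difficulty here, having already been handled in Lemma \ref{obv} and Corollary \ref{impcorirr}.
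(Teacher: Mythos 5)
Your proof is correct and uses exactly the same ingredients as the paper's: the $\mathfrak{g}$-map $\phi\colon V_{-\mu}\to\mathcal{A}_{\fin}$, the weight-multiplicity identity $\dim V_{-\mu}(\xi)=\dim\mathcal{A}_{\fin}(\xi)$, and Corollary \ref{impcorirr}. The only difference is presentational: you argue by contraposition (reducible Verma module $\Rightarrow$ reducible $\mathcal{A}$), which forces a case split on $\ker\phi$, whereas the paper argues directly — irreducibility of $\mathcal{A}$ plus Corollary \ref{impcorirr} forces $\mathrm{im}\,\phi=\mathcal{A}_{\fin}$ at once, then multiplicity matching upgrades $\phi$ to an isomorphism and a second appeal to the corollary gives irreducibility of $\mathcal{A}_{\fin}\cong V_{-\mu}$, with no cases.
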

\begin{proof}

Let $W \subset \mathcal{A}_{\fin}$ be the image of $V_{-\mu}$ by $\phi$. Then $W \neq 0$. If $\mathcal{A}$ is an irreducible $G$-module, $W=\mathcal{A}_{\fin}$ by corollary \ref{impcorirr}. Thus we have a surjective map $\phi: V_{-\mu} \rightarrow \mathcal{A}_{\fin}$. But, as we noticed, the dimensions of $V_{-\mu}(\xi)$ and of $\mathcal{A}_{\fin}(\xi)$ coincide. This implies that $\phi$ is an isomorphism. On the other hand (again by the corollary \ref{impcorirr}) $W$ is irreducible. Thus $V_{-\mu}$ is irreducible.

\end{proof}

Now we determine the condition when the Verma module $V_{-\mu}$ is irreducible. Recall that $$\mu=(-c_1,...,-c_n):Diag(t_1,...,t_n)\mapsto \sum_{i=1}^n-c_it_i$$
where $t=(t_i) \in \mathfrak{h}$. For negative root $\alpha =(i,j), i>j$, let $H_{\alpha=(i,j)}$ be the matrix $E_{i,i}-E_{j,j}$ where $E_{i,i}$ is the standard elementary matrix.
\begin{lemma}\label{Vermairred}
 The Verma module   $V_{-\mu}$ is   irreducible if and only if for all $\alpha=(i,j) \in \Phi^-$,  $(-\mu)(H_{\alpha=(i,j)}) + i-j \notin \{1,2,3,...\}$.
\end{lemma}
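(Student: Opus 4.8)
The plan is to recognise $V_{-\mu}$ as a Verma module in the Bernstein--Gelfand--Gelfand sense and then to quote the classical irreducibility criterion, the only real work being to line up the conventions used here (the Borel $\mathfrak{b}^-$ in place of $\mathfrak{b}$, the weight $-\mu$, and the $\rho$-shift). First I would recall the classical statement, as found in \cite[Chapter 7]{Dix} (compare the structural result \cite[7.6.24]{Dix} already used in the proof of Lemma \ref{irreAconv}): for a split reductive Lie algebra equipped with a choice of positive roots $\Psi$, with half sum of positive roots $\rho_\Psi$, the Verma module of highest weight $\lambda$ is irreducible if and only if $\lambda$ is antidominant, i.e. $\langle\lambda+\rho_\Psi,\alpha^\vee\rangle\notin\{1,2,3,\dots\}$ for every $\alpha\in\Psi$. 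In Dixmier's $\rho$-shifted parametrisation the condition reads $\langle\lambda',\alpha^\vee\rangle\notin\{1,2,3,\dots\}$ with $\lambda'=\lambda+\rho_\Psi$; this is precisely the shift $\lambda-\rho^-=-\mu$ already appearing in the dimension count in the proof of Lemma \ref{irreA}.

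Next I would specialise. Here $V_{-\mu}=U(\mathfrak{g})\otimes_{U(\mathfrak{b}^-)}K$ is the Verma module attached to the \emph{opposite} Borel $\mathfrak{b}^-$, whose positive roots are $\Phi^-$ and whose half sum of positive roots is $\rho^-=\tfrac12\sum_{\alpha\in\Phi^-}\alpha=-\rho$, where $\rho=\tfrac12\sum_{\alpha\in\Phi^+}\alpha$. Applying the criterion with this Borel gives: $V_{-\mu}$ is irreducible if and only if $\langle-\mu+\rho^-,\alpha^\vee\rangle\notin\{1,2,3,\dots\}$ for all $\alpha\in\Phi^-$. (Alternatively one may keep the statement only for $\mathfrak{b}$ and transport along the Lie algebra automorphism $X\mapsto-{}^{t}X$ of $\mathfrak{gl}_n$, which interchanges $\mathfrak{b}$ and $\mathfrak{b}^-$ and acts by $-1$ on $\mathfrak{h}$; since the centre of $\mathfrak{gl}_n$ acts by a scalar on $V_{-\mu}$, irreducibility is controlled by the $\mathfrak{sl}_n$-action and only the traceless coroots intervene, so passing between $\mathfrak{gl}_n$ and $\mathfrak{sl}_n$ changes nothing.)

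Then I would evaluate the pairings. For $\alpha=(i,j)\in\Phi^-$ (so $i>j$) the coroot $\alpha^\vee$ is, in type $A_{n-1}$, precisely $H_{\alpha=(i,j)}=E_{i,i}-E_{j,j}$; moreover $\langle\rho,\beta^\vee\rangle$ equals the height of $\beta$ for $\beta\in\Phi^+$, so, since $e_j-e_i$ has height $i-j$ and $H_{\alpha=(i,j)}=-(e_j-e_i)^\vee$, one gets $\langle\rho,H_{\alpha=(i,j)}\rangle=-(i-j)$ and hence $\langle\rho^-,H_{\alpha=(i,j)}\rangle=i-j$ (equivalently, write $\rho=\sum_{k=1}^n\tfrac{n+1-2k}{2}e_k$ and compute directly). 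Substituting, $\langle-\mu+\rho^-,H_{\alpha=(i,j)}\rangle=(-\mu)(H_{\alpha=(i,j)})+(i-j)$, so the criterion becomes exactly the one in the statement: $V_{-\mu}$ is irreducible if and only if $(-\mu)(H_{\alpha=(i,j)})+i-j\notin\{1,2,3,\dots\}$ for every $\alpha=(i,j)\in\Phi^-$.

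The step I expect to be the only delicate one is not a piece of mathematics — the heart is a direct appeal to the classical Verma module irreducibility theorem — but the careful bookkeeping of conventions: keeping straight the opposite Borel $\mathfrak{b}^-$, the sign $\rho^-=-\rho$, the difference between the linear and the dot action (Dixmier's $\rho$-shift), the computation $\langle\rho^-,H_{\alpha=(i,j)}\rangle=i-j$, and the harmless passage from $\mathfrak{gl}_n$ to $\mathfrak{sl}_n$. I would also flag that the criterion genuinely requires testing all of $\Phi^-$, not merely the simple roots (testing the simple roots would only recover the condition $c_{i+1}-c_i\notin\mathbb{N}$), which is why the statement ranges over all $\alpha=(i,j)\in\Phi^-$.
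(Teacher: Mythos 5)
Your proposal is correct and follows essentially the same route as the paper: both invoke the Bernstein--Gelfand criterion as in Dixmier's Theorem $7.6.24$, interpret it for the opposite Borel $\mathfrak{b}^-$ (so that the relevant $\rho$-shift is $\rho^-=\tfrac12\sum_{\alpha\in\Phi^-}\alpha$), and compute $\rho^-(H_{\alpha=(i,j)})=i-j$ to arrive at the stated condition over all of $\Phi^-$. The only genuine addition on your side is the explicit (and correct) remark that passing between $\mathfrak{gl}_n$ and $\mathfrak{sl}_n$ is harmless because the centre acts by a scalar and the coroots $H_\alpha$ are traceless, a point the paper leaves implicit.
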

\begin{proof}

Let $\rho^-=\frac{1}{2}\sum_{\alpha \in \Phi^-}\alpha$. For $\alpha=(i,j) \in \Phi^-$, $H_{\alpha}=H_{i+1,i}+\cdots +H_{j,j-1}$ and $\rho^-(H_{k+1,k})=1$. This gives that $\rho^-(H_{\alpha=(i,j)})=i-j$. By Theorem 7.6.24 of \cite{Dix}, the condition of irreducibility of our $V_{-\mu}$ is $(-\mu+\rho^-)(H_{\alpha}) \notin \{1,2,3,...\}$ for all negative roots $\alpha \in \Phi^-$ (This is because Dixmier's $\mathfrak{b}^+$ is our $\mathfrak{b}^-$ and so we have to work with negative roots.) This gives the condition $(-\mu)(H_{\alpha}) + i-j \notin \{1,2,3,...\}$ 
\end{proof}
Lemma \ref{Vermairred}, Lemma \ref{irreAconv}, and Lemma \ref{irreA} together proves Theorem \ref{thm:holomorphic}. [Q.E.D]

\subsection{~}\label{sub:lastsectionbasechange}
With $L$ an unramified finite extension of $\mathbb{Q}_p$. All the arguments of section \ref{sub:sectionbigger} extend automatically to the group $G(L)$. As $L$ is unramified, the conditions on the character $\chi$ to be analytic, that is those given by \ref{eq:unramifiedK}, remain unchanged. Moreover, note that the representation $\mathcal{A}(B_1^{\frac{n(n-1)}{2}},K)$ (where now $B_1^{\frac{n(n-1)}{2}}$ is seen as a product of $\frac{n(n-1)}{2}$ closed rigid balls of radius $1$ as an $L$-analytic space) given by the lemmas \ref{lem:lemmadiagonal1}, \ref{lem:lemmalower1},  \ref{lem:lemmaupper2} are $L$-analytic. The restriction of $\mathcal{A}(B_1^{\frac{n(n-1)}{2}},K)$ to $G(\mathbb{Q}_p)$ is simply the previous representation. Indeed, the representation of $G(L)$ is obtained from the representation of $G(\mathbb{Q}_p)$ by holomorphic base change (cf. section \ref{sub:holomorphicbasechange} and \cite[prop. $3.1$]{Clozel1}). Denote by $I_{\mathbb{Q}_p}(\chi)$ and $I_L(\chi)$, respectively, the two \textit{globally analytic} representations (the character $\chi$ is defined by the parameters $(c_1,...,c_n)$, we agree to identify the characters for the two fields). Then we have:
\begin{theorem}\label{thm:holomorphic2}
For a given embedding $L \hookrightarrow K$, with $\mu $ as in \ref{mu}, if  $-\mu(H_{\alpha}) + i-j \notin \{1,2,3,...\}$ for all $\alpha=(i,j) \in \Phi^-$, then $I_L(\chi)$ is an admissible, irreducible (under  both $G(L)$ and $G(\mathbb{Q}_p)$) globally analytic representation and it is the holomorphic base change of $I_{\mathbb{Q}_{p}}(\chi)$.
\end{theorem}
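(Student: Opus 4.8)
The plan is to transport all of section~\ref{sub:sectionbigger} to the group $G(L)$, to recognize the resulting representation as the holomorphic base change of $I_{\mathbb{Q}_p}(\chi)$, and then to deduce irreducibility over $G(\mathbb{Q}_p)$ --- hence \emph{a fortiori} over $G(L)$ --- directly from Theorem~\ref{thm:holomorphic}.

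First I would check that every construction of section~\ref{sub:sectionbigger} survives unchanged for $G(L)$. Since $L/\mathbb{Q}_p$ is unramified, $p$ still generates the maximal ideal of $\mathcal{O}_L$, so for $p>n+1$ the pro-$p$ Iwahori subgroup $G(L)$ of $GL_n(L)$ is again $p$-saturated in the sense of Lazard and is the same ordered product of the one-parameter lower unipotent subgroups $(1+yE_{i,j})$ with $y\in\mathcal{O}_L$, the one-parameter diagonal subgroups with $t_k\in 1+p\mathcal{O}_L$, and the one-parameter upper unipotent subgroups $(1+yE_{i,j})$ with $y\in p\mathcal{O}_L$. The space $I_{\loc}$ becomes $\mathcal{A}_{\loc}((B^1/L)^d,K)$ with $d=\frac{n(n-1)}{2}$, and its globally analytic vectors form $I_L(\chi)=\mathcal{A}(B_1^{d},K)$, the $L$-analytic Tate algebra in $d$ variables with coefficients in $K$. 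The matrix decompositions and combinatorial estimates of Lemmas~\ref{lem:lemmadiagonal1}, \ref{lem:lemmalower1}, \ref{lem:biglemma} and~\ref{lem:lemmaupper2} rest on polynomial identities with integral coefficients, so they hold verbatim with entries in $\mathcal{O}_L$ (resp.\ $p\mathcal{O}_L$); and the analyticity condition~\ref{eq:unramifiedK} on $(c_1,\dots,c_n)$ depends only on the ramification index $e(K)$, which is unaffected, so $\chi$ remains analytic. Hence Theorem~\ref{thmmainglo} applies over $G(L)$: the action of $G(L)$ on $I_L(\chi)$ is globally analytic, and the representation is admissible, the conditions $f(gb)=\chi(b^{-1})f(g)$ again cutting out a closed subspace of the $L$-analytic Tate algebra of $G(L)$.

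Next I would identify $I_L(\chi)$ with the holomorphic base change of $I_{\mathbb{Q}_p}(\chi)$. The explicit action formulas of the three lemmas are obtained from the corresponding $\mathbb{Q}_p$-formulas just by letting the entries range over $\mathcal{O}_L$ (resp.\ $p\mathcal{O}_L$) instead of $\mathbb{Z}_p$ (resp.\ $p\mathbb{Z}_p$); equivalently, the comultiplication on $\mathcal{A}(G(L))$ and the $G(L)$-action on $I_L(\chi)$ are obtained by extension of scalars from $\mathbb{Q}_p$. By the construction of section~\ref{sub:holomorphicbasechange} (cf.\ \cite[prop.\ 3.1]{Clozel1}), this is exactly the statement that $I_L(\chi)$ is the holomorphic base change of $I_{\mathbb{Q}_p}(\chi)$, and consequently the restriction of $I_L(\chi)$ to $G(\mathbb{Q}_p)$ is again $I_{\mathbb{Q}_p}(\chi)$, as a topological $G(\mathbb{Q}_p)$-representation.

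Finally, irreducibility follows formally. By Theorem~\ref{thm:holomorphic}, under the hypothesis $-\mu(H_{\alpha})+i-j\notin\{1,2,3,\dots\}$ for every $\alpha=(i,j)\in\Phi^-$, the representation $I_{\mathbb{Q}_p}(\chi)\cong\mathcal{A}$ is topologically irreducible under $G(\mathbb{Q}_p)$. Since the restriction of $I_L(\chi)$ to $G(\mathbb{Q}_p)$ is $I_{\mathbb{Q}_p}(\chi)$, the representation $I_L(\chi)$ is irreducible already under $G(\mathbb{Q}_p)$; and since any closed $G(L)$-invariant subspace is in particular $G(\mathbb{Q}_p)$-invariant, $I_L(\chi)$ is irreducible under $G(L)$ as well --- one could also invoke the extension of Theorem~\ref{thm:holomorphic} to $G(L)$ directly, the irreducibility condition being insensitive to the unramified base change. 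Combined with the second paragraph this proves the theorem. I do not expect a serious obstacle here; the one point requiring genuine care is the verification --- supplied by the restriction-of-scalars formalism of section~\ref{sub:obtaing} and the base-change compatibilities of \cite{Clozel1} --- that the $G(L)$-action really is the extension of scalars of the $G(\mathbb{Q}_p)$-action, so that ``holomorphic base change'' applies and restriction to $G(\mathbb{Q}_p)$ recovers $I_{\mathbb{Q}_p}(\chi)$.
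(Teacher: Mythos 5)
Your proposal is correct and follows the paper's own route: extend section~\ref{sub:sectionbigger} verbatim to $G(L)$ (unramified, so $p$ still uniformizes and the analyticity condition~\ref{eq:unramifiedK} is unchanged), identify $I_L(\chi)$ as the holomorphic base change per section~\ref{sub:holomorphicbasechange} and \cite[prop.~3.1]{Clozel1}, observe that the restriction of $I_L(\chi)$ to $G(\mathbb{Q}_p)$ is $I_{\mathbb{Q}_p}(\chi)$, and deduce irreducibility over $G(\mathbb{Q}_p)$ from Theorem~\ref{thm:holomorphic} and over $G(L)$ a fortiori. You fill in a few details the paper leaves as a remark (e.g.\ the direct closed-subspace argument for admissibility rather than citing Clozel), but the logic is the same.
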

$I_L(\chi)$ is admissible, as holomorphic base change respects admissibility \cite[prop. $3.1$]{Clozel1}.
With the notations of section \ref{sub:holomorphicbasechange}, define the full (Langlands) base change of $I_{\mathbb{Q}_p}$ to be the representation of $\Res_{L/\mathbb{Q}_p}G(\mathbb{Q}_p)$ on $\widehat{\otimes}_{\sigma} \text{ } (I_L(\chi))^{\sigma}:=I(\chi \circ N_{L/\mathbb{Q}_p})$, where $N_{L/\mathbb{Q}_p}$ is the norm map from $L$ to $\mathbb{Q}_p$ and $\widehat{\otimes}$ is the completed tensor product (see also \cite[def. $3.8$]{Clozel1}) and $\sigma \in \Gal(L/\Q_p)$. Note that, for each factor, the embedding $i:L \rightarrow K$ must be replaced by $i\circ \sigma$.  Finally, we then have
\begin{theorem}\label{thmlastiwahoribase}
Let $\mu $ be as in \ref{mu}, Assume  $-\mu(H_{\alpha}) + i-j \notin \{1,2,3,...\}$ for all $\alpha=(i,j) \in \Phi^-$. Then the completed tensor product $\widehat{\otimes}_{\sigma} \text{ } (I_L(\chi))^{\sigma}$ is irreducible, and is the representation of $G(L)$ on the space of globally analytic vectors, induced from $\chi \circ N_{L/\mathbb{Q}_p}$.
\end{theorem}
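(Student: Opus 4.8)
The plan is to prove the statement in two stages: first identify $\widehat{\otimes}_\sigma (I_L(\chi))^\sigma$ with the globally analytic principal series of $G(L)$ induced from $\chi\circ N_{L/\mathbb{Q}_p}$ — the \emph{globally analytic Steinberg tensor product} — and then establish its topological irreducibility via the Lie algebra and Verma module machinery already developed for Theorem \ref{thm:holomorphic}, now applied after restriction of scalars. Only one implication (``if the condition holds, then irreducible'') is asserted, so the converse need not be treated.

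For the identification, write $\Sigma=\Gal(L/\mathbb{Q}_p)$, so that $N_{L/\mathbb{Q}_p}=\prod_{\sigma\in\Sigma}\sigma$ on the torus. By section \ref{sub:obtaing} and \cite[prop. 1.5]{Clozel1} one has $\Res_{L/\mathbb{Q}_p}(G(L))\times_{\mathbb{Q}_p}L\cong\prod_\sigma G(L)$ and correspondingly $\mathcal{A}(\Res_{L/\mathbb{Q}_p}G(L))\widehat{\otimes}_{\mathbb{Q}_p}L\cong\widehat{\otimes}_\sigma\mathcal{A}(G(L))$, and likewise with $U(L)$ or $T_0(L)$ in place of $G(L)$. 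The $G(L)$-analogue of Theorem \ref{thmmainglo} — valid since, as noted at the beginning of section \ref{sub:lastsectionbasechange}, all the computations of section \ref{sub:sectionbigger} go through verbatim over $L$ — realizes $\ind_{P_0(L)}^{B(L)}(\chi\circ N_{L/\mathbb{Q}_p})$ on $\mathcal{A}(U(L),K)$. Decomposing $\mathcal{A}(U(L),K)\cong\widehat{\otimes}_\sigma\mathcal{A}(U(L),K)$ as above, the defining relation $f(gb)=(\chi\circ N_{L/\mathbb{Q}_p})(b^{-1})f(g)$ factors, using $N_{L/\mathbb{Q}_p}=\prod_\sigma\sigma$ and the multiplicativity of $\chi$, into the relations $f_\sigma(g_\sigma b_\sigma)=\chi^\sigma(b_\sigma^{-1})f_\sigma(g_\sigma)$ on the separate factors, while the left action of $\Res_{L/\mathbb{Q}_p}G(L)$ matches the diagonal action on $\widehat{\otimes}_\sigma (I_L(\chi))^\sigma$, each factor twisted by $i\circ\sigma$. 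This yields $\widehat{\otimes}_\sigma(I_L(\chi))^\sigma\cong I(\chi\circ N_{L/\mathbb{Q}_p})$ as globally analytic representations of $\Res_{L/\mathbb{Q}_p}G(L)$, the desired Steinberg tensor product decomposition (cf. \cite{Steinberg1}, and the $GL(2)$ case \cite[section 1.1]{Clozel2}, \cite[def. 3.8]{Clozel1}); admissibility is inherited, either as a completed tensor product of the admissible representations of Theorem \ref{thm:holomorphic2} or because $I(\chi\circ N_{L/\mathbb{Q}_p})$ is cut out in $\mathcal{A}(\Res_{L/\mathbb{Q}_p}G(L))$ by the closed condition $f(gb)=(\chi\circ N_{L/\mathbb{Q}_p})(b^{-1})f(g)$, $\chi\circ N_{L/\mathbb{Q}_p}$ being analytic since $L/\mathbb{Q}_p$ is unramified (cf. \ref{eq:unramifiedK}), compatibly with \cite[prop. 3.1]{Clozel1}.

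For irreducibility, put $\mathfrak{g}_{\mathbb{Q}_p}=\Lie(\Res_{L/\mathbb{Q}_p}G(L))$ and $\mathfrak{h}_{\mathbb{Q}_p}=\Lie(\Res_{L/\mathbb{Q}_p}T_0(L))$. After extension of scalars to $K$ one gets $\mathfrak{g}_{\mathbb{Q}_p}\otimes_{\mathbb{Q}_p}K\cong\bigoplus_{\sigma}\mathfrak{g}_\sigma$ with each $\mathfrak{g}_\sigma$ a copy of $\Lie(G)\otimes K$, and $\mathfrak{g}_\sigma$ acts on the $\sigma$-th tensor factor of $\widehat{\otimes}_\tau(I_L(\chi))^\tau$ only. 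By equation \ref{twojstar} and the discussion around \ref{VermatoA} applied to each factor, the $\mathfrak{h}_{\mathbb{Q}_p}$-finite vectors of $\widehat{\otimes}_\sigma(I_L(\chi))^\sigma$ form the external tensor product $\bigotimes_\sigma V_{-\mu^\sigma}^{(\sigma)}$ of $N$ Verma modules, one over each $\mathfrak{g}_\sigma$, where $\mu^\sigma$ is the $\sigma$-twist of $\mu$. By Lemma \ref{Vermairred}, $V_{-\mu^\sigma}$ is irreducible iff $-\mu^\sigma(H_\alpha)+i-j\notin\{1,2,3,\dots\}$ for all $\alpha=(i,j)\in\Phi^-$; since $\mu(H_\alpha)\in K$, $i-j\in\mathbb{Z}$, and membership of $\mathbb{Z}$ is preserved by $\Sigma$, each such condition is equivalent to $-\mu(H_\alpha)+i-j\notin\{1,2,3,\dots\}$, our hypothesis, so every $V_{-\mu^\sigma}$ is irreducible (equivalently, this is Theorem \ref{thm:holomorphic2} applied factor by factor). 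An external tensor product of irreducible modules with scalar endomorphism ring (which holds here by uniqueness of the highest weight line) over $\bigotimes_\sigma U(\mathfrak{g}_\sigma)=U(\mathfrak{g}_{\mathbb{Q}_p}\otimes K)$ is irreducible, by a minimal-length / Jacobson density argument; hence $\bigotimes_\sigma V_{-\mu^\sigma}^{(\sigma)}$ is an irreducible $U(\mathfrak{g}_{\mathbb{Q}_p})$-module. Finally, given a nonzero closed $\Res_{L/\mathbb{Q}_p}G(L)$-invariant subspace $\mathcal{X}$, the argument of Lemma \ref{1inM} (again extending verbatim over $L$, via the diagonal infinitesimal generators $T_k$ and the $p$-power divisibility descent) gives $a^0=1\in\mathcal{X}$, whence $\bigotimes_\sigma V_{-\mu^\sigma}^{(\sigma)}=U(\mathfrak{g}_{\mathbb{Q}_p})\cdot 1\subset\mathcal{X}$; by the analogues of Lemmas \ref{obv} and \ref{irreA} this dense submodule forces $\mathcal{X}=\widehat{\otimes}_\sigma(I_L(\chi))^\sigma$. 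This proves the irreducibility and completes the theorem.

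The main obstacle is the irreducibility, and inside it two points: (i) that the external tensor product of the infinite-dimensional highest-weight (Verma) modules attached to the distinct Galois factors is irreducible over the product universal enveloping algebra — this requires a density argument, not the finite-dimensional dictionary; and (ii) propagating this algebraic irreducibility to topological irreducibility of the completed tensor product, i.e. re-running the ``the constant function lies in every invariant subspace'' step and the closure lemmas (\ref{1inM}, \ref{obv}, \ref{irreA}) in the restriction-of-scalars context. By contrast, the identification with the induced representation is essentially a bookkeeping consequence of the restriction-of-scalars formalism of section \ref{sub:obtaing} together with the factorization $N_{L/\mathbb{Q}_p}=\prod_\sigma\sigma$.
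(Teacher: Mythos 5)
Your proposal is correct and follows essentially the same route as the paper, except that where you spell out the argument in full — the restriction-of-scalars/Steinberg identification, the reduction to an external tensor product of Verma modules over $\bigoplus_\sigma\mathfrak{g}_\sigma$ after extending scalars to $K$, the Jacobson-density irreducibility of that tensor product, and the propagation to topological irreducibility via the constant-function and $\mathfrak{h}$-finite-vector lemmas — the paper simply says that each factor is irreducible by Theorem \ref{thm:holomorphic2}, that the underlying space is $\mathcal{A}(\Res_{L/\mathbb{Q}_p}U,K)$, and then delegates the irreducibility of the completed tensor product to ``a natural generalization of Clozel's argument in Theorem $3.11$, part $(ii)$ of \cite{Clozel1}.'' Your write-up is thus a self-contained version of the argument the paper merely cites, and the only cosmetic slip is the line $\mathcal{A}(U(L),K)\cong\widehat{\otimes}_\sigma\mathcal{A}(U(L),K)$, where the left-hand side should read $\mathcal{A}(\Res_{L/\mathbb{Q}_p}U,K)$ as in the paper.
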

\begin{proof}
Notice that by assumption, each factor in the completed tensor product is irreducible and admits the same description as in theorem \ref{thm:holomorphic2}.  The space of the representation $I(\chi \circ N_{L/\mathbb{Q}_p})$ is $\widehat{\otimes}_{\sigma} \text{ } \mathcal{A}(U,K)=\mathcal{A}(\Res_{L/\mathbb{Q}_p}U,K)$ which is a space of globally analytic vectors (by theorem \ref{thm:holomorphic2})  in the locally analytic representation $I_{\loc}(\chi \circ N_{L/\mathbb{Q}_p})$ of $\Res_{L/\mathbb{Q}_p}(G)$. The proof of irreducibility of $\widehat{\otimes}_{\sigma} \text{ } (I_L(\chi))^{\sigma}$ follows from Theorem \ref{thm:holomorphic} using a natural generalization of Clozel's argument in Theorem $3.11$, part $(ii)$ of \cite{Clozel1},  revised version. 
\end{proof}

\appendix
\section{Analyticity for the induction from the Weyl orbits of the upper triangular Borel subgroup of B}\label{appendixA}
In this appendix we treat the global analyticity of the principal series induced from Weyl orbits of the Borel subgroup (theorem \ref{thminserted1}). Then we base change our globally analytic representation to $L$ (section \ref{sub:weylbasechange}).
\subsection{}\label{sub:weylanalyticity}
Denote by $\mathbb{P}$ the Borel subgroup of the upper triangular matrices in $GL_n(\mathbb{Q}_p)$,  $\mathbb{T}$ the maximal torus of ${GL_n(\mathbb{Q}_p)}$, $P^+$ the Borel subgroup of the upper triangular matrices in $GL_n(\mathbb{Z}_p)$, $W$  the ordinary Weyl group of $GL_n(\mathbb{Q}_p)$ with respect to $\mathbb{T}$ which is isomorphic to the group of $n \times n$ permutation matrices, ${P_w^+=B \cap wP^+w^{-1}}$, where $B$ is the Iwahori subgroup in section \ref{sub:sectionbigger}, $\ind_{\mathbb{P}}^{GL_n(\mathbb{Q}_p)}(\chi)_{\loc}$ the locally analytic induction, that is:
\[\ind_{\mathbb{P}}^{GL_n(\mathbb{Q}_p)}(\chi)_{\loc}=\{f \in \mathcal{A}_{\loc}({GL_n(\mathbb{Q}_p)},K):f(gb)=\chi(b^{-1})f(g), g \in {GL_n(\mathbb{Q}_p)}, b \in \mathbb{P}\}.\]
The Iwasawa decomposition \cite[sec. 3.2.2]{Orlik1} gives 
\begin{equation}\label{eq:decompositioniwasawa}
\ind_{\mathbb{P}}^{GL_n(\mathbb{Q}_p)}(\chi)_{\loc}\cong \ind_{P^+}^{GL_n(\mathbb{Z}_p)}(\chi)_{\loc}
\end{equation}
as $GL_n(\mathbb{Z}_p)$-equivariant topological isomorphism.
By the Bruhat-Tits decomposition (\textit{loc. cit.} and \cite[sec. 3.5]{Cartier1}) \[GL_n(\mathbb{Z}_p)=\sqcup_{w \in W}BwP^+,\]
we obtain the decomposition
\begin{equation}\label{eq:bruhattitsdecomposition}
\ind_{P^+}^{GL_n(\mathbb{Z}_p)}(\chi)_{\loc} \cong \oplus_{w \in W} \ind_{P_w^+}^B(\chi^w)_{\loc},
\end{equation}
a $B$-equivariant decomposition of topological vector spaces, where the action of $\chi^w$ is given by $\chi^w(h)=\chi(w^{-1}hw)$. 
Let $\ind_{P_w^+}^B(\chi^w)$ be the space of globally analytic functions of $\ind_{P_w^+}^B(\chi^w)_{\loc}$.
Our goal is to show that for all $w \in W$, $\ind_{P_w^+}^B(\chi^w)$ is a globally analytic representation of $G$. We have already showed, in section \ref{sec:twoiwahori}, that for $w=Id$, $\chi$ analytic, the induction $\ind_{P_0}^B(\chi)$ is a globally analytic representation of $G$. (Note that $B \cap P^+=P_0$). Recall that $U$ is the lower triangular unipotent subgroup of $GL_n(\mathbb{Z}_p)$. 
Consider the decomposition (cf. lemma $3.3.2$ of \cite{Orlik1})
\[B=(wUw^{-1} \cap B)(wP^+w^{-1}\cap B)=(wUw^{-1} \cap B)(P_w^+).\]
For $GL_3$, and $w= \left( {\begin{array}{ccc}
   0 & 0 & 1 \\
   1 & 0 & 0\\
   0 & 1 & 0 \\
  \end{array} } \right)$
  the above decomposition is  like
\[
  B=
  \left( {\begin{array}{ccc}
   \mathbb{Z}_p^{\times} & p\mathbb{Z}_p & p\mathbb{Z}_p \\
   \mathbb{Z}_p & \mathbb{Z}_p^{\times} & p\mathbb{Z}_p\\
   \mathbb{Z}_p & \mathbb{Z}_p & \mathbb{Z}_p^{\times} \\
  \end{array} } \right)
  =\left( {\begin{array}{ccc}
     1 & p\mathbb{Z}_p & p\mathbb{Z}_p \\
     0 & 1 & 0\\
     0 & \mathbb{Z}_p & 1 \\
    \end{array} } \right)
    \left( {\begin{array}{ccc}
       \mathbb{Z}_p^{\times} & 0 & 0 \\
       \mathbb{Z}_p & \mathbb{Z}_p^{\times} & p\mathbb{Z}_p\\
       \mathbb{Z}_p & 0 & \mathbb{Z}_p^{\times} \\
      \end{array} } \right).
\]
 For a character $\chi$ of $\mathbb{T}\cap GL_n(\mathbb{Z}_p)$, we extend it to a character of $P_w^+$ by acting trivially on the non-diagonal elements of $P_w^+$. By definition,
\[\ind_{P_w^+}^B(\chi)_{\loc}=\{f \in \mathcal{A}_{\loc}(B,K):f(gb)=\chi(b^{-1})f(g), b \in P_w^+, g \in B\}.\]
With the decomposition $B=(wUw^{-1} \cap B)(P_w^+)$, the vector space of locally analytic functions $\ind_{P_w^+}^B(\chi)_{\loc}$ is the same as $\mathcal{A}_{\loc}(wUw^{-1} \cap B,K)$. Let $\mathcal{A}(wUw^{-1} \cap B,K)$ be the subspace of globally analytic functions of $\mathcal{A}_{\loc}(wUw^{-1} \cap B,K)$.
With $i \neq j$ fixed,  $y \in \mathbb{Z}_p$ if $i>j$ and $y \in p\mathbb{Z}_p$ if $i<j$, recall that the action of the one-parameter subgroup on $f \in \mathcal{A}(wUw^{-1} \cap B,K)$ is given by
\begin{align}
(1+yE_{i,j})f(C)&=f((1+yE_{i,j})^{-1}C) \qquad \qquad (\text{with } C\in wUw^{-1} \cap B)\\
&=f((1-yE_{i,j})C)\\
&=f((1-yE_{i,j})wAw^{-1}) \qquad (\text{with } C=wAw^{-1} \text{ for  } A \in U).\label{eq:actionweyl1}
\end{align}

Our goal is to show that this action is globally analytic. 

Since $w^{-1}\in W$, write $w^{-1}$ in the form of a permutation matrix, i.e. ${w^{-1}=\sum_{r=1}^nE_{r,j_r}}$ with $j_r \neq j_s$ for $r \neq s$. Then, \[w^{-1}(1-yE_{i,j})=(\sum_{r=1}^nE_{r,j_r})(1-yE_{i,j})=(\sum_{r=1}^nE_{r,j_r})-yE_{k,j}\]
where $k$ is such that $j_k=i$.
As the inverse of a permutation matrix is its transpose, we obtain 
\begin{align*}
w^{-1}(1-yE_{i,j})w&=\Big((\sum_{r=1}^nE_{r,j_r})-yE_{k,j}\Big)(\sum_{s=1}^nE_{j_s,s})\\
&=1-yE_{k,l}  \qquad (\text {use } j_r \neq j_s \text{ for } r \neq s)
\end{align*} 
where $l$ is such that $j_l=j$.
So we have deduced that 
\begin{equation}\label{eq:thisequation}
(1-yE_{i,j})w=w(1-yE_{k,l}) \qquad (k,l \text{ such that } j_k=i,j_l=j).
\end{equation}
Inserting equation \ref{eq:thisequation} in \ref{eq:actionweyl1} we obtain 
\begin{align}\label{eq:thatequation}
(1+yE_{i,j})f(C)=f(w(1-yE_{k,l})Aw^{-1})
\end{align}
Now, the globally analytic function $f$ on $w(1-yE_{k,l})Aw^{-1}$ equals to some globally analytic function $g$ on $(1-yE_{k,l})A$, because the conjugacy action of $w$ on the matrix $(1-yE_{k,l})A$ is just permuting the entries of $(1-yE_{k,l})A$. So, equation \ref{eq:thatequation} is 
\begin{align*}
f(w(1-yE_{k,l})Aw^{-1})&=g((1-yE_{k,l})A)\\
&=(1+yE_{k,l})g(A) \qquad (\text{recall } A \in U)
\end{align*}
and we know from lemmas \ref{lem:lemmalower1} and \ref{lem:lemmaupper2} that the action of $(1+yE_{k,l})$ on $g(A)$  is globally analytic. Thus, we have shown that 
\begin{lemma}\label{lem:actionweyllowerupper}
The action of the lower and the upper unipotent one-parameter subgroups of $G$ of the form $(1+yE_{i,j})$ on $f \in \mathcal{A}(wUw^{-1} \cap B,K)$ is a globally analytic action.
\end{lemma}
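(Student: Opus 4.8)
The plan is to transport the whole computation back to the group $U$ by conjugating with the permutation matrix $w$, and then to appeal to Lemmas \ref{lem:lemmalower1} and \ref{lem:lemmaupper2}, which already record the global analyticity of the action of the lower and upper one-parameter unipotent subgroups on $\mathcal{A}(U,K)$. First I would note that every $C\in wUw^{-1}\cap B$ can be written $C=wAw^{-1}$ with $A\in U$, so that by \ref{eq:actionweyl1} one has $(1+yE_{i,j})f(C)=f\big((1-yE_{i,j})wAw^{-1}\big)$. Then I would establish the conjugation identity of \ref{eq:thisequation}: writing $w^{-1}=\sum_{r}E_{r,j_r}$ and letting $k,l$ be the indices with $j_k=i$ and $j_l=j$, a direct matrix computation gives $w^{-1}(1-yE_{i,j})w=1-yE_{k,l}$, hence $(1-yE_{i,j})w=w(1-yE_{k,l})$ — multiplying $1-yE_{i,j}$ by a permutation matrix on the left relabels the row index of $E_{i,j}$ and on the right relabels its column index.

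With this in hand the action reads $(1+yE_{i,j})f(C)=f\big(w(1-yE_{k,l})Aw^{-1}\big)$, as in \ref{eq:thatequation}. Next I would observe that conjugation by the permutation matrix $w$ simply permutes the entries of a matrix, so precomposing $f$ with $M\mapsto wMw^{-1}$ turns $f$ into a function $g$ on the corresponding polydisc, with $g$ globally analytic if and only if $f$ is; thus $(1+yE_{i,j})f(C)=g\big((1-yE_{k,l})A\big)=(1+yE_{k,l})g(A)$ with $A\in U$. At this point I would split into two cases: if $k>l$ then $(1-yE_{k,l})A$ remains lower unipotent and Lemma \ref{lem:lemmalower1} gives that $y\mapsto(1+yE_{k,l})g(A)$ is globally analytic, while if $k<l$ one writes $(1-yE_{k,l})A=XZ$ with $X\in U$ and $Z\in Q_0$ exactly as in Lemma \ref{lem:biglemma} and invokes Lemma \ref{lem:lemmaupper2}. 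In either case $y\mapsto(1+yE_{i,j})f$ is globally analytic, which is the assertion of Lemma \ref{lem:actionweyllowerupper}.

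The step that really needs care — and which I expect to be the main obstacle — is the bookkeeping of the $p$-adic radii and of the lower/upper structure under $w$-conjugation. Concretely, $wUw^{-1}\cap B$ is a product of closed discs, of radius $1$ at the strictly-lower free positions and of radius $1/p$ at the strictly-upper ones (the $GL_3$ example makes this explicit), and one must check that the constraint on $y$ attached to the direction $(i,j)$ — namely $y\in\Z_p$ when $i>j$ and $y\in p\Z_p$ when $i<j$ — is carried by the correspondence $(i,j)\leftrightarrow(k,l)$ to exactly the constraint under which the relevant one of Lemmas \ref{lem:lemmalower1}, \ref{lem:lemmaupper2} was established, and, in the case $k<l$, that the decomposition $(1-yE_{k,l})A=XZ$ with $Z\in Q_0$ is still available and compatible with the coordinates on $wUw^{-1}\cap B$ after conjugating back by $w$. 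Granting this matching of coordinates and radii, the reduction above goes through and Lemma \ref{lem:actionweyllowerupper} follows.
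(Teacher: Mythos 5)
Your argument follows exactly the route the paper takes for Lemma~\ref{lem:actionweyllowerupper}: establish $(1-yE_{i,j})w=w(1-yE_{k,l})$ with $j_k=i$, $j_l=j$, absorb the conjugation by $w$ into a new globally analytic function $g$ on the transported polydisc, and quote Lemmas~\ref{lem:lemmalower1} and~\ref{lem:lemmaupper2}.

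The caveat you flag at the end is genuine, and it is in fact passed over silently in the paper. The correspondence $(i,j)\mapsto(k,l)$ need \emph{not} preserve the sign of $i-j$: with $n=3$ and $w$ the $3$-cycle of the paper's example, the lower direction $(i,j)=(2,1)$ with $y\in\Z_p$ corresponds to the upper direction $(k,l)=(1,3)$, whereas Lemma~\ref{lem:lemmaupper2} is proved only for $y\in p\Z_p$. So the reduction cannot be a verbatim citation of the two lemmas. What makes the argument go through is exactly the radius bookkeeping you identify: $g$ lives on the sub-polydisc $U_w:=w^{-1}(wUw^{-1}\cap B)w\subset U$, on which the coordinate $a_{r,s}$ ($r>s$) has radius $1/p$ precisely when the position $(\sigma(r),\sigma(s))$ of $wUw^{-1}\cap B$ is strictly upper, and in the $XZ$-decomposition of $(1-yE_{k,l})A$ every term that would require $y\in p\Z_p$ is multiplied by such a small coordinate of $A$, which supplies the missing factor of $p$ (for instance in the $GL_3$ example $z_{1,1}=1-ya_{3,1}$ with $a_{3,1}\in p\Z_p$). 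The estimates of Lemmas~\ref{lem:lemmalower1} and~\ref{lem:lemmaupper2} must therefore be re-run with the $U_w$-radii in place of the $U$-radii rather than quoted directly; with that done the conclusion stands. In short, your proof is the same as the paper's, including the step that both leave implicit, and you were right to single it out as the point needing verification.
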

Similar argument also shows that the action of the diagonal subgroup of $G$ on $\mathcal{A}(wUw^{-1} \cap B,K)$ is globally analytic. More precisely, we write $w^{-1}Diag(t_1,...,t_n)w=Diag(t_1^{\prime},...,t_n^{\prime})$ with $(t_1^{\prime},...,t_n^{\prime})$ a permutation of $(t_1,...,t_n)$. Then, with ${C\in wUw^{-1}\cap B}$,
\begin{align*}
Diag(t_1^{-1},...,t_n^{-1})f(C)&=f\Big(Diag(t_1,...,t_n)wAw^{-1}\Big) \qquad (C=wAw^{-1})\\
&=f\Big(w[Diag(t_1^{\prime},...,t_n^{\prime})]Aw^{-1}\Big)\\
&=g\Big(Diag(t_1^{\prime},...,t_n^{\prime})A\Big) \qquad (\text{for some analytic } g)\\
&=Diag(t_1^{-1},...,t_n^{-1})g(A)
\end{align*}
and by lemmas \ref{lem:lemmadiagonal1} and \ref{lem:sufficeaction}, the action of the diagonal subgroup of $G$ on $g(A)$ is a globally analytic action. Therefore, we have shown
\begin{lemma}\label{lem:analyticweyldiagonalaction}
The action of the diagonal subgroup of $G$ on $\mathcal{A}(wUw^{-1} \cap B,K)$ is globally analytic.
\end{lemma}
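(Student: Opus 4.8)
The plan is to transport the problem through conjugation by the permutation matrix $w$ and reduce it to the case $w=\mathrm{Id}$ already established in Lemma~\ref{lem:lemmadiagonal1}. First I would observe that the argument behind Lemma~\ref{lem:sufficeaction} applies verbatim with $wUw^{-1}\cap B$ in place of $U$, so it suffices to prove global analyticity separately for each one-parameter diagonal subgroup $\mathrm{Diag}(1,\dots,t_k,\dots,1)$ with $t_k\in 1+p\Z_p$, and we may work with a single such subgroup at a time.

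Next I would set up the conjugation dictionary. Using the decomposition $B=(wUw^{-1}\cap B)P_w^+$ recalled above, every $C\in wUw^{-1}\cap B$ is uniquely of the form $C=wAw^{-1}$ with $A\in U$, and the map $f\mapsto\big(A\mapsto f(wAw^{-1})\big)$ is an isometric isomorphism of Tate algebras $\mathcal{A}(wUw^{-1}\cap B,K)\xrightarrow{\sim}\mathcal{A}(U,K)$, because conjugating by the permutation matrix $w$ merely permutes the matrix entries $a_\alpha$ without rescaling them, hence preserves the condition $|c_\nu|\to 0$. The elementary point making everything work is that permutation matrices normalize the diagonal torus: $w^{-1}\,\mathrm{Diag}(t_1,\dots,t_n)\,w=\mathrm{Diag}(t_1',\dots,t_n')$, where $(t_i')$ is the permutation of $(t_i)$ induced by $w$, so conjugating a one-parameter diagonal subgroup through $w$ yields again (up to relabelling) a one-parameter diagonal subgroup.

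Putting these together, for $C=wAw^{-1}$ the left-translation action computes as
\[
\mathrm{Diag}(t_1^{-1},\dots,t_n^{-1})\cdot f(C)=f\big(w\,\mathrm{Diag}(t_1',\dots,t_n')\,A\,w^{-1}\big)=g\big(\mathrm{Diag}(t_1',\dots,t_n')\,A\big),
\]
where $g\in\mathcal{A}(U,K)$ is the image of $f$ under the isomorphism above, the character twist $\chi^w$ being absorbed into $g$ (note that condition~\ref{eq:unramifiedK} is symmetric in the $c_i$, so $\chi^w$ is analytic as soon as $\chi$ is). The right-hand side is exactly the action of a one-parameter diagonal subgroup of $G$ on an element of $\mathcal{A}(U,K)$, which is globally analytic by Lemma~\ref{lem:lemmadiagonal1}; pulling this back through the isometric isomorphism shows $t_k\mapsto\mathrm{Diag}(1,\dots,t_k,\dots,1)\cdot f$ is globally analytic on $\Z_p$, and Lemma~\ref{lem:sufficeaction} then assembles these into global analyticity of the full diagonal action, which is the lemma.

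The main obstacle I anticipate is bookkeeping rather than genuine analysis: one must check carefully that $f\mapsto\big(A\mapsto f(wAw^{-1})\big)$ really is an isometry of Tate algebras (so the globally analytic condition is preserved in both directions) and that, after permuting coordinates, the twisted character $\chi^w$ extended trivially off the diagonal of $P_w^+$ matches $\chi$ on $Q_0$; once this is pinned down, no estimate beyond Lemma~\ref{lem:lemmadiagonal1} is needed.
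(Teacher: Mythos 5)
Your proposal is correct and takes essentially the same route as the paper: write $C = wAw^{-1}$, commute the diagonal matrix past $w$ to turn it into the permuted diagonal $\mathrm{Diag}(t_1',\dots,t_n')$, use that conjugation by the permutation matrix $w$ gives an isomorphism onto $\mathcal{A}(U,K)$, and invoke Lemmas~\ref{lem:lemmadiagonal1} and~\ref{lem:sufficeaction}. The extra care you take with the isometry of Tate algebras and the compatibility of $\chi^w$ with $\chi$ under the identification is exactly what the paper leaves implicit (and in fact the paper's final display has a harmless slip, writing $\mathrm{Diag}(t_1^{-1},\dots,t_n^{-1})$ where $\mathrm{Diag}((t_1')^{-1},\dots,(t_n')^{-1})$ is meant, which your version states correctly).
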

Recall that the vector space $\mathcal{A}(wUw^{-1} \cap B,K)$ is isomorphic to $\ind_{P_w^+}^B(\chi^w)$. Thus, lemmas \ref{lem:actionweyllowerupper} and \ref{lem:analyticweyldiagonalaction}
together gives
\begin{theorem}\label{thminserted1}
Assume $p>n+1$. Then, for all $w \in W$, the action of the pro-$p$ Iwahori group $G$ on $\ind_{P_w^+}^B(\chi^w)$ is globally analytic.
\end{theorem}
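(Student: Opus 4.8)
The plan is to assemble the theorem from the two lemmas just established together with the Lazard decomposition of $G$, exactly as Theorem \ref{thmmainglo} was assembled from Lemmas \ref{lem:lemmadiagonal1}, \ref{lem:lemmalower1}, \ref{lem:lemmaupper2}. First I would record that, because $p>n+1$, the group $G$ is $p$-saturated in the sense of Lazard \cite[III, $3.2.7.5$]{Lazard} and hence, as a rigid-analytic group, is the ordered product of the lower unipotent one-parameter subgroups $(1+yE_{i,j})$ with $i>j$, the one-parameter diagonal subgroups $(t_kE_{k,k}+\sum_{i\neq k}E_{i,i})$, and the upper unipotent one-parameter subgroups $(1+yE_{i,j})$ with $i<j$ and $y\in p\Z_p$, in the order of section \ref{sub:sectionbigger}. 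The underlying vector space of $\ind_{P_w^+}^B(\chi^w)$ is, via the decomposition $B=(wUw^{-1}\cap B)P_w^+$, the Tate algebra $\mathcal{A}(wUw^{-1}\cap B,K)$, so the same reduction as in Lemma \ref{lem:sufficeaction} (the argument after lemma $3.4$ of \cite{Clozel1}) applies: the action map $h\mapsto h\cdot f$ is globally analytic on $G$ if and only if it is globally analytic on each one-parameter factor and on the diagonal subgroup of which $G$ is the product, since the coordinates on $G$ are products of the coordinates on the factors and a finite composite of globally analytic maps between Tate algebras is globally analytic.

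Then I would simply invoke the two lemmas: Lemma \ref{lem:actionweyllowerupper} gives the global analyticity of the action of every lower and upper unipotent one-parameter subgroup on $\mathcal{A}(wUw^{-1}\cap B,K)$, obtained by conjugating through the permutation matrix $w$ (equations \ref{eq:thisequation}--\ref{eq:thatequation}) and reducing to Lemmas \ref{lem:lemmalower1} and \ref{lem:lemmaupper2} on $\mathcal{A}(U,K)$; and Lemma \ref{lem:analyticweyldiagonalaction} gives the global analyticity of the action of the diagonal subgroup by the same conjugation trick reducing to Lemma \ref{lem:lemmadiagonal1}. Since these exhaust all the one-parameter factors of $G$, the reduction of the previous paragraph yields that the action of $G$ on $\ind_{P_w^+}^B(\chi^w)$ is globally analytic. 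As $w\in W$ was arbitrary, the theorem follows.

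I do not anticipate a real obstacle, since the genuine analytic content is carried entirely by Lemmas \ref{lem:lemmadiagonal1}, \ref{lem:lemmalower1}, \ref{lem:lemmaupper2} and their Weyl-twisted forms \ref{lem:actionweyllowerupper} and \ref{lem:analyticweyldiagonalaction}; the only care needed is that the conjugation by $w$ genuinely identifies $\mathcal{A}(wUw^{-1}\cap B,K)$ with the appropriate Tate algebra and that the induced action of $P_w^+$ matches the twisted character $\chi^w$, so that the reduction as in Lemma \ref{lem:sufficeaction} applies without change — and this is exactly what the decomposition $B=(wUw^{-1}\cap B)P_w^+$ and the conjugation identities \ref{eq:thisequation}--\ref{eq:thatequation} provide.
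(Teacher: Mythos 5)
Your proposal is correct and matches the paper's argument: the paper also deduces Theorem \ref{thminserted1} directly from Lemmas \ref{lem:actionweyllowerupper} and \ref{lem:analyticweyldiagonalaction} via the reduction to one-parameter factors provided by the Lazard ordered-product decomposition (Lemma \ref{lem:sufficeaction}). No substantive difference.
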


\subsection{}\label{sub:weylbasechange}
Following the notations of section \ref{sub:lastsectionbasechange}, we fix $L$ a finite unramified extension of $\mathbb{Q}_p$ inside $K$. For each $w \in W, $ consider the globally analytic admissible representation $I_{w, \mathbb{Q}_p}(\chi):=\mathcal{A}(wUw^{-1} \cap B,K)$ of $G(\mathbb{Q}_p)$. By section \ref{sub:holomorphicbasechange}, $\mathcal{A}(wUw^{-1} \cap B,K)$ extends naturally to a globally analytic admissible representation of $G(L)$ called the "holomorphic base change" which we denote by $I_{w, L}(\chi)$.  With the notations of section \ref{sub:holomorphicbasechange}, define the full Langland's base change to be the representation of $\Res_{L/\mathbb{Q}_p}G(\mathbb{Q}_p)$ on $\oplus_{w \in W}(\widehat{\otimes}_{\sigma} \text{ } I_{w, L}(\chi)^{\sigma})$ (cf. \cite[sec. 3.5]{Clozel1}). Finally, like theorem \ref{thmlastiwahoribase}, we will then have
\begin{theorem}\label{thm:fullLanglandsweyl}
The Langlands base change   $\oplus_{w \in W}(\widehat{\otimes}_{\sigma} \text{ } I_{w, L}(\chi^w)^{\sigma})$ is a globally analytic admissible representation of $G(L)$. 
\end{theorem}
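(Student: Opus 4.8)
The plan is to obtain Theorem \ref{thm:fullLanglandsweyl} as a formal consequence of Theorem \ref{thminserted1}, the base change formalism of section \ref{sub:holomorphicbasechange}, and the argument already carried out for Theorem \ref{thmlastiwahoribase}, applied separately on each Weyl component. First I would record that for every $w \in W$ the space $I_{w,\mathbb{Q}_p}(\chi^w) = \mathcal{A}(wUw^{-1}\cap B, K)$ carries a globally analytic action of $G(\mathbb{Q}_p)$ by Theorem \ref{thminserted1}, and that this representation is admissible: exactly as in Theorem \ref{thm:holomorphic}, $\mathcal{A}(wUw^{-1}\cap B, K)$ is the closed subspace of $\mathcal{A}(G)$ cut out by the relations $f(gb)=\chi^w(b^{-1})f(g)$ for $b \in P_w^+$, the functions being analytic on $G$ since $\chi$, hence $\chi^w$, is analytic. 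Since $L/\mathbb{Q}_p$ is unramified the analyticity condition \ref{eq:unramifiedK} on $\chi$ is unchanged, so lemmas \ref{lem:lemmadiagonal1}, \ref{lem:lemmalower1}, \ref{lem:lemmaupper2}, and hence Theorem \ref{thminserted1}, go through verbatim over $L$; this gives the $L$-analytic representation $I_{w,L}(\chi^w)$, which is the holomorphic base change of $I_{w,\mathbb{Q}_p}(\chi^w)$ and is again admissible by \cite[prop. 3.1]{Clozel1}.

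Next, for each fixed $w$, I would form the completed tensor product $\widehat{\otimes}_{\sigma \in \Gal(L/\mathbb{Q}_p)} I_{w,L}(\chi^w)^{\sigma}$, each factor obtained by replacing the embedding $i : L \hookrightarrow K$ by $i\circ \sigma$. By the discussion of section \ref{sub:holomorphicbasechange} (and \cite[prop. 1.5]{Clozel1}), $\Res_{L/\mathbb{Q}_p}(wUw^{-1}\cap B)\otimes_{\mathbb{Q}_p}L \cong \prod_{\sigma}(wUw^{-1}\cap B)$, so the underlying space is $\widehat{\otimes}_{\sigma}\mathcal{A}(wUw^{-1}\cap B,K) \cong \mathcal{A}(\Res_{L/\mathbb{Q}_p}(wUw^{-1}\cap B),K)$, i.e. the globally analytic vectors inside $\ind_{P_w^+}^B(\chi^w \circ N_{L/\mathbb{Q}_p})_{\loc}$ of $\Res_{L/\mathbb{Q}_p}G$. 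Exactly as in the proof of Theorem \ref{thmlastiwahoribase}, this representation is globally analytic and admissible: holomorphic base change and finite completed tensor products preserve both properties (\cite[prop. 3.1, def. 3.8]{Clozel1}), and the map $b=\otimes_{\sigma}b_1^{\sigma}$ commuting with co-multiplications (section \ref{sub:holomorphicbasechange}, \cite[prop. 1.5]{Clozel1}) is precisely what guarantees that the $\Res_{L/\mathbb{Q}_p}G(\mathbb{Q}_p)$-action is globally analytic on the rigid-analytic group $\Res\, G(L)$. Identifying $\Res\, G(L)$ with $G(L)$ as in Theorem \ref{thmlastiwahoribase} then exhibits $\widehat{\otimes}_{\sigma}I_{w,L}(\chi^w)^{\sigma}$ as the representation of $G(L)$ on the globally analytic vectors induced from $\chi^w \circ N_{L/\mathbb{Q}_p}$.

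Finally, since $W \cong S_n$ is finite, a finite direct sum of globally analytic admissible representations is again globally analytic and admissible, so $\oplus_{w\in W}(\widehat{\otimes}_{\sigma}I_{w,L}(\chi^w)^{\sigma})$ has the asserted property, the $W$-indexing being inherited from the Bruhat--Tits decomposition \ref{eq:bruhattitsdecomposition}. The only step with genuine content — and it is essentially already discharged in Theorem \ref{thmlastiwahoribase} — is the passage through the completed tensor product: one must check that the co-action extends along the base-change map $b$ so as to land in the Tate algebra of $\Res\, G(L)$, and that admissibility survives the tensor product. Both hold because each factor is admissible, the index set $\Gal(L/\mathbb{Q}_p)$ is finite, and $b=\otimes_{\sigma}b_1^{\sigma}$ commutes with co-multiplication; the direct sum over $w$ then costs nothing.
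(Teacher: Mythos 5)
Your proposal is correct and matches the paper's intent: the paper prints no proof of Theorem \ref{thm:fullLanglandsweyl}, noting only that it is obtained ``like theorem \ref{thmlastiwahoribase}'', which is precisely the chain of reductions you spell out (Theorem \ref{thminserted1} for the global analyticity of each $I_{w,\mathbb{Q}_p}(\chi^w)$ together with the closed-subspace argument for admissibility, holomorphic base change and \cite[prop.\ 3.1]{Clozel1} to pass to $L$, stability of both properties under the finite completed tensor product over $\Gal(L/\mathbb{Q}_p)$ via $b=\otimes_\sigma b_1^\sigma$, and finally the finite direct sum over $W$ coming from the Bruhat--Tits decomposition \ref{eq:bruhattitsdecomposition}). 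Since Theorem \ref{thm:fullLanglandsweyl}, unlike Theorem \ref{thmlastiwahoribase}, does not assert irreducibility (and a nontrivial direct sum over $W$ could not be irreducible), your restriction of attention to global analyticity and admissibility is exactly the right scope.
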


In conclusion, for $p>n+1$, we have shown that for all $w \in W$,  $\ind_{P_w^+}^B(\chi^w)$ is a globally analytic representation of the pro-$p$ Iwahori $G$ under the analyticity assumption on the character $\chi$. Furthermore we have treated the case of irreducibility of the principal series  when $w=Id$. We hope that it is possible to adapt and generalize  the argument of our irreducibility proof to treat the case when $w \neq Id$. Also it is an interesting future project to determine the globally analytic vectors of more general $p$-adic representations of $GL(2,\Q_p)$, for example the "trianguline"  representation of Colmez \cite{Colmeztri} (see also \cite{ColmezLoc}),  which corresponds to a quotient of principal series. Also one can explore the connection with the globally analytic vectors of $p$-adic representations (under the pro-$p$ Iwahori or a suitable rigid-analytic subgroup of $GL(2)$) and $(\varphi,\Gamma)$-modules \cite{Colmezphi}, similar to the  existing correspondence for locally analytic representations \cite[Sec VI.3]{ColDos}.


\bibliographystyle{amsplain}
\bibliography{base_change_Iwahori,Iwawasa_algebra_and_cornut}

\end{document}